%FOR A MATH ARTICLE
\documentclass[12pt,twoside,leqno]{amsart}
\usepackage{amsmath}
\usepackage{amssymb}
\usepackage{amsxtra}
\usepackage{amscd}
\usepackage{verbatim}
\usepackage{amsthm}
\usepackage[mathscr]{eucal}%\usepackage{eucal}
\usepackage{amsmath,amssymb,latexsym,amsfonts,graphicx}
\usepackage{array}
\usepackage{url}
\usepackage[all]{xy}
\usepackage{supertabular}

%\usepackage[pdftex]{color}
%\usepackage[pdftex]{geometry}
%\usepackage{pause,background,pp4slide}
%\geometry{headsep=3ex,hscale=0.9}
%\usepackage{hyperref}
%\usepackage[notcite]{showkeys}
%\usepackage{showkeys}

% remove this for the production version--this introduces labels for
%references

%\usepackage[color,notcite,notref]{showkeys}
%\definecolor{labelkey}{rgb}{1,0,0}

%\setlength{\topmargin}{0cm}
\setlength{\oddsidemargin}{0cm}
\setlength{\evensidemargin}{0cm}
\setlength{\textheight}{21cm}
\setlength{\textwidth}{16cm}\theoremstyle{plain}
\newtheorem{thm}[subsection]{Theorem}
\newtheorem{prop}[subsection]{Proposition}

\newtheorem{lem}[subsection]{Lemma}
\newtheorem{conj}[subsection]{Conjecture}

\theoremstyle{definition}

\newtheorem{eg}[subsection]{Example}

\input cyracc.def %sha
\font\tencyr=wncyr10
\def\sha{\text{\tencyr\cyracc{Sh}}}
\font\eightcyr=wncyr8
\font\tabfont=cmr6
\def\smallsha{\text{\eightcyr\cyracc{Sh}}}

\newcommand{\vabove}[2]{\genfrac{}{}{0pt}{}{#1}{#2}}
\def\sm#1#2#3#4{\genfrac{(}{.}{0pt}{1}{#1}{#3}\genfrac{.}{)}{0pt}{1}{#2}{#4}}
\let\iso\cong
\let\tensor\otimes
\newcommand{\sgn}{{\rm sgn}}
\newcommand{\Frob}{{\rm Frob}}

\newcommand{\Q}{\mathbb Q}
\newcommand{\qp}{{\mathbb Q}_p}
\newcommand{\R}{\mathbb R}
\newcommand{\C}{\mathbb C}
\newcommand{\Z}{\mathbb Z}
\newcommand{\La}{\Lambda}
\newcommand{\cyc}{{\rm cyc}}
\newcommand{\Gal}{{\rm Gal}}
\newcommand{\Aut}{{\rm Aut}}
\newcommand{\ord}{{\rm ord}}
\newcommand{\can}{{\rm can}}
\renewcommand{\det}{{\rm det}}
\newcommand{\sel}{{\rm Sel}}
\renewcommand{\ker}{{\rm Ker}}
\newcommand{\Hom}{{\rm Hom}}
\newcommand{\cK}{\mathcal K}
\newcommand{\cL}{\mathcal L}
\newcommand{\cF}{\mathcal F}
\newcommand{\cO}{\mathcal O}
\newcommand{\ap}{{A_{p^{\infty}}}}
\newcommand{\apo}{{A^0_{p^{\infty}}}}
\newcommand{\fc}{{\cF}^{\cyc}}
\newcommand{\qc}{{\q^{\cyc}}}
\newcommand{\kc}{{\cK^{\cyc}}}
\newcommand{\clp}{{\cL}_p^+}
\newcommand{\clm}{{\cL}_p^-}
\newcommand{\f}{\frak f}
\newcommand{\g}{\frak g}
\newcommand{\n}{\frak n}
\renewcommand{\a}{\frak a}
\newcommand{\Fin}{F_{\infty}}

\begin{document}

\title{Non-commutative Iwasawa theory for modular forms}
\author{J. Coates, T. Dokchitser, Z. Liang, W. Stein, R. Sujatha}
\address{J. Coates, Centre for Mathematical Sciences, Wilberforce Road, Cambridge CB3 0WB, England}
\address{T. Dokchitser, Department of Mathematics, University Walk, Bristol BS8 1TW, United Kingdom}
\address{Z. Liang, School of Mathematical Sciences, Capital Normal University, Xisanhuanbeilu 105, Haidan District, Beijing, China}
\address{W. Stein, Department of Mathematics, University of Washington, Seattle, Box 354350 WA 98195, USA}
\address{R. Sujatha, Mathematics Department, 1984, Mathematics Road, University of British Columbia, Vancouver BC, Canada V6T 1Z2}
\email{J.H.Coates@dpmms.cam.ac.uk, tim.dokchitser@bristol.ac.uk,
liangzhb@gmail.com, wstein@uw.edu, sujatha@math.ubc.ca}
\thanks{%
T. Dokchitser was supported by a Royal Society University Research Fellowship,
Z. Liang by the National Natural Sciences Foundation of China 
  (Grant Nos. 11001183 and 11171231),
W. Stein from NSF Grant DMS-1015114, and R. Sujatha from NSERC Grant 22R65339.}

\subjclass[2000]{Primary 11F67; Secondary 11F33, 11R23}
%11F67 Special values of automorphic $L$-series, periods of modular forms, cohomology, modular symbols
%11F33 Congruences for modular and $p$-adic modular forms
%11R23 Iwasawa theory

\begin{abstract}
%The aim of the present paper is to provide some
% evidence, largely numerical, for the validity of the non-commutative main
%conjecture of Iwasawa theory for the motive $M(f)$ attached to a modular 
%form $f$ over
%the $p$-adic Lie extension
%$$
%  \Fin=\Q(\mu_{p^{\infty}}, m^{1/p^n}, n=1,2,\dots),
%$$
%which is obtained by adjoining to $\Q$ the group $\mu_{p^{\infty}}$ of all
%$p$-power roots of unity, and all $p$-power roots of some fixed integer $m
%>1.$  The existence of a $p$-adic $L$-function attached to $f$ over the
%non-abelian extension $\Fin$ of $\Q$, when combined with the work of Kato,
%implies the existence of a congruence relation between two $p$-adic
%$L$-functions attached to $f$ over certain abelian sub-extensions of
%$\Fin/\Q$, which in turn has consequences for critical $L$-values of the
%form.
The aim of the present paper is to give evidence, largely numerical, in 
support of the non-commutative main conjecture of Iwasawa theory for the 
motive of a primitive modular form of weight $k>2$ over the Galois 
extension of $\Q$ obtained by adjoining to $\Q$ all $p$-power roots of unity, and 
all $p$-power roots of a fixed integer $m > 1$. The predictions of the main 
conjecture are rather intricate in this case because there is more than 
one critical point, and also there is no canonical choice of periods. 
Nevertheless, our numerical data agrees perfectly with all aspects of the 
main conjecture, including Kato's mysterious congruence between the 
cyclotomic Manin $p$-adic $L$-function, and the cyclotomic $p$-adic 
$L$-function of a twist of the motive by a certain non-abelian 
Artin character of the Galois group of this extension.
\end{abstract}

\maketitle

\section{Introduction}
Let $z$ be a variable in the upper half complex plane, and put $q = e^{2{\pi}iz}$. Let
\begin{equation}\label{f}
f(z)=\overset{\infty}{\underset{n=1}{\sum}}\, a_nq^n ,
\end{equation}
be a primitive cusp form of conductor $N$ (in the sense of \cite{MI}),
with trivial character, and weight $k >2.$ For simplicity, we shall always assume that the Fourier coefficients
$a_n ~(n\geq 1)$ of $f$ are in $\Q$.  Let $p$ be an odd prime number. The aim of the present paper is to provide some
 evidence, largely numerical, for the validity of the non-commutative main conjecture of Iwasawa theory for the motive $M(f)$ attached to $f$ over the $p$-adic Lie extension
$$
\Fin=\Q(\mu_{p^{\infty}}, m^{1/p^n}, n=1,2,\dots),
$$
which is obtained by adjoining to $\Q$ the group $\mu_{p^{\infty}}$ of all $p$-power roots of unity, and all $p$-power roots of some fixed integer $m >1.$   In this case, the analytic continuation and functional equation for the complex $L$-function $L(f,\phi,s)$ of $f$ twisted by any Artin character $\phi$ of the Galois group of $\Fin$ over $\Q$ are well-known consequences  of the theory of automorphic base change.  The points $s=1,\dots, k-1$ are critical for all of the complex $L$-functions $L(f,\phi,s)$, and we show that essentially the same arguments as in \cite{BD1} enable one to prove the expected algebraicity statement at these points. Moreover, these values are all non-zero, except perhaps for the central value $s=k/2$; in particular, there is always at least one non-zero critical value since $k>2.$

\medskip

In \cite{CFKSV}, a precise main conjecture was formulated for an elliptic curve over any $p$-adic Lie extension of a number field $F$ containing the cyclotomic $\Z_p$-extension of $F$, and under the assumption that the elliptic curve is ordinary at  the prime $p$. This was generalized to arbitrary ordinary motives in \cite{FK}, and it is a special case of the main conjecture of
\cite{FK} which we consider here.  Thus we assume that $p$ is an odd prime number such that $(p,a_p)=(p,N)=1$. One of the underlying ideas of the non-commutative main conjecture  is to prove the existence of a $p$-adic $L$-function, which interpolates a canonical normalization of the critical values $L(f,\phi,n),$ where $n=1,\dots, k-1,$ and $\phi$ runs over all Artin representations of the Galois group
$$
G = \Gal(\Fin/\Q).
$$
We denote these normalized $L$-values by ${\cL}_p^{\can}(f,\phi,n)$ (for the precise definition, see formulae \eqref{56}, \eqref{56''} and \eqref{57'} in \S 5). The definition of these normalized $L$-values requires making a choice of canonical periods for the form $f$, and, until  such a time as the main conjectures of non-commutative Iwasawa theory are fully proven,  we are only able to make an educated guess at present as to what these canonical periods should be. However, as we explain in \S5, Manin's work on the construction of the $p$-adic $L$-function for our modular form $f$ over the field $\Q(\mu_{p^{\infty}})$ gives some information about these canonical periods, which is relevant for our numerical examples.

\medskip

As we explain in more detail in \S 5, the existence of a $p$-adic $L$-function attached to $f$ over the non-abelian extension $\Fin$ of $\Q$, when combined with the work of Kato \cite{K1}, implies the existence of the following mysterious congruence between two $p$-adic $L$-functions attached to $f$ over certain abelian sub-extensions of $\Fin/\Q$. We are very grateful to M. Kakde for explaining to us how this congruence follows from Kato's work. Let $\sigma$ denote the $(p-1)$-dimensional representation of $G$ given by the direct sum of the irreducible representations of $\Gal(\Q(\mu_p)/\Q)$. Let $\rho$ be the unique irreducible representation of dimension $p-1$ of the Galois group of the field
$$
 F=\Q(\mu_p,m^{1/p})
 $$
over $\Q$, where we now assume that $m>1$ is $p$-power free. Write $\Q^{\cyc}$ for the cyclotomic $\Z_p$-extension of $\Q$, and $\Xi$ for the group of irreducible characters of finite order of $\Gamma =
\Gal(\Q^{\cyc}/\Q)$. Further, let $\chi_p$ denote the character giving the action of $\Gal(\bar{\Q}/{\Q})$  on $\mu_{p^{\infty}}.$ We fix a topological generator $\gamma$ of $\Gamma$, and put $u=\chi_p(\gamma).$ The work of Manin \cite{M} proves that there exists  a unique power series $H(\sigma, T)$ in the ring $R=\Z_p[[T]]$ such that
\begin{equation}\label{1.}
H(\sigma,\psi(\gamma)u^r-1)={\cL}_p^{\can}(f,\sigma\psi,k/2+r),
\end{equation}
for all $\psi$ in $\Xi$, and all integers $r$ with $-k/2+1\leq r \leq k/2-1.$
On the other hand, the conjectural existence of a good $p$-adic $L$-function for $f$ over the field $\Fin$ would imply, in particular, the existence of a power series $H(\rho,T)$ in the ring $R$ such that
\begin{equation}\label{1.1}
H(\rho,\psi(\gamma)u^r-1)={\cL}_p^{\can}(f,\rho\psi,k/2+r),
\end{equation}
for all $\psi$ in $\Xi$, and all integers $r$ with $-k/2+1\leq r \leq k/2-1.$
Then Kato's work \cite{K1} implies  the following conjectural congruence between formal power series
\begin{equation}\label{1.11}
H(\rho,T) \equiv H(\sigma,T)\!\!\!\mod\, pR.
\end{equation}
This conjectural congruence in $R$ has the following consequences for our critical $L$-values. Firstly, on evaluation of our power series at the relevant point in $p\Z_p$, we deduce from \eqref{1.} and \eqref{1.1} that the congruence
\begin{equation}\label{1'}
{\cL}_p^{\can}(f,\rho,n) \equiv {\cL}_p^{\can}(f,\sigma,n)\,{\rm mod}\, p\Z_p
\end{equation}
should hold for $n=1,\dots, k-1.$ Secondly, if we assume the additional property that
\begin{equation}\label{1.3}
L(f,\sigma,k/2)=L(f,\rho,k/2)=0,
\end{equation}
then we would have that $H(\rho,T)$ and $H(\sigma,T)$ both belong to the ideal $TR.$ It is then clear from \eqref{1.}, \eqref{1.1} and \eqref{1.11} that the stronger congruence
\begin{equation}\label{1''}
{\cL}_p^{\can}(f,\rho,n) \equiv {\cL}_p^{\can}(f,\sigma,n)\,{\rm mod}\, p^2\Z_p
\end{equation}
should hold for $n=1,\dots, k-1.$

\medskip

Our numerical computations (see \S 6) verify the first congruence \eqref{1'} for the prime $p=3$ and a substantial range of cube free integers $m>1,$ for three forms $f$ of weight 4 and conductors 5, 7, 121, and one form $f$ of weight 6 and conductor 5, all of which are ordinary at 3. These computations require us to determine numerically the Fourier coefficients $a_n$ of these forms $f$ for $n$ in the range $1 \leq n \leq 10^8$.  In addition, for the two forms of weight 4 and conductors 7 and 121, we prove that \eqref{1.3}  holds  for all integers $m>1$, and happily, our numerical results show that the sharper congruence \eqref{1''} holds for these two forms and the prime $p=3$ for a good range of cube free integers $m>1.$ When $f$ is a complex mutliplication form, some cases of the congruence \eqref{1.11} have already been established theoretically by Delbourgo and Ward \cite{DT} and Kim \cite{DK}. However, when $f$ is not a complex multiplication form, our numerical data seems to provide the first hard evidence in support of the mysterious non-abelian congruence \eqref{1.11} between abelian $p$-adic $L$-functions.

\medskip

We warmly thank T. Bouganis, M. Kakde, and D. Kim
for very helpful advice on the writing of this paper.

\section{Algebraicity of $L$-values}

As in the Introduction, let $f$ given by \eqref{f}
be a primitive cusp form of conductor $N\geq 1$ with trivial character and weight $k >2$
(thus $k$ is necessarily even).
 For simplicity, we always assume that the Fourier coefficients $a_n\,(n \geq 1)$ of $f$ belong to $\Q.$
The complex $L$-function attached to $f$ is
\begin{equation}\label{2}
L(f,s)=\sum_{n=1}^{\infty}\, a_n/n^s.
\end{equation}
This $L$-function has the following Euler product.  For any prime $p$,  let
\begin{equation}\label{7}
\tau_p\,:\, \Gal (\bar {\Q}/\Q) \to \Aut_{\Q_p}(V_p)
\end{equation}
be the $p$-adic Galois representation attached to $f$; here $V_p$ is a two dimensional vector space over the field $\Q_p$ of $p$-adic numbers. If $q$ is any prime distinct from $p$, define the polynomial
\begin{equation}\label{8}
P_q(f,X)={\rm det}(1-\tau_p({\rm Frob}_q^{-1})X\mid V_p^{I_q}),
\end{equation}
where $I_q$ is the inertial subgroup of the decomposition group of any fixed prime of $\bar{\Q}$ above $q$, and ${\rm Frob}_q$ denote the Frobenius automorphism of $q$.  Moreover,  if $(q,N)=1$,  we have
\begin{equation}\label{9}
P_q(f,X)=1-a_qX + q^{k-1} X^2.
\end{equation}
Then
\begin{equation}\label{10}
L(f,s)=\prod_q P_q(f,q^{-s})^{-1}
\end{equation}
when ${\rm Re}(s)> 1+ (k-1)/2.$ Defining
\begin{equation}\label{3}
\La(f,s)=N^{s/2}(2\pi)^{-s}\Gamma(s)L(f,s),
\end{equation}
we know, since Hecke, that $\La(f,s)$ is entire and satisfies the functional equation
\begin{equation}\label{4}
\La(f,s)=w(f)\La(f,k-s)
\end{equation}
where $w(f)=\pm 1$ is the sign in the functional equation. The critical values of $L(f,s)$ are at the points
$s=1,\dots,k-1.$  Following Shimura \cite{S1}, \cite{S2}, we introduce the following  naive periods for $f$, which we have normalized in view of our later numerical calculations.
Define
\begin{equation}\label{5}
\Omega_-(f)=\,i w(f) (2\pi )^{-1}L(f,1).
\end{equation}
Since the Euler product for $L(f,s)$ converges to a positive real number when $s$ is real and $s> 1+(k-1)/2,$ it is clear from the functional equation
\eqref{4}, that $\Omega_-(f)$ is purely imaginary in the upper half plane.  Motivated again by numerical calculations, we assume throughout
the following simplifying hypothesis (see \cite{DSW} for examples in which
this hypothesis fails).

\medskip

\noindent{\bf Hypothesis H1:}   $L(f,2)\neq 0$
when $k=4.$

\medskip

\noindent We then define
\begin{equation}\label{6}
\Omega_+(f)=w(f)(2\pi)^{-2} \,L(f,2).
\end{equation}
Again, $\Omega_+(f)$ is always a positive real number when $k>4$, and presumably (it would, of course, be implied by the generalized Riemann Hypothesis) this remains true even when $k=4$, although this value is outside the region of convergence of the Euler product.

\begin{thm}{\rm (See \cite{S1},\cite{S2})}\label{shim1}
(i) If $n$ is an odd integer such that $1\leq n \leq k-1$,  then
$$
(2\pi i)^{-n}L(f,n)/\Omega_-(f) \in \Q;
$$
(ii) If $n$ is an even integer such that  $ 1 \leq n \leq k-1$, then
$$
(2\pi i)^{-n}L(f,n)/\Omega_+(f) \in \Q.
$$
\end{thm}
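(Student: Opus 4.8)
The plan is to follow the classical approach of Shimura via the theory of periods of modular symbols, which expresses the critical $L$-values as special values of the Mellin transform of $f$ twisted against Dirichlet characters, and then invoke the rationality of the modular symbols attached to $f$. First I would recall that for any integer $n$ with $1 \le n \le k-1$, the value $L(f,n)$ is, up to an explicit elementary factor involving powers of $2\pi i$ and $\Gamma(n)$, equal to the period integral $\int_0^{i\infty} f(z) z^{n-1}\, dz$, i.e. to the evaluation of the cohomology class $\omega_f = f(z)\,dz \in H^1_{\mathrm{dR}}$ against the path from $0$ to $i\infty$ in the modular symbol formalism. The key structural input is Eichler--Shimura: the space of modular symbols of weight $k$ for $\Gamma_0(N)$ carries a rational (indeed $\Q$-) structure, compatible with the Hecke action, and the $f$-isotypic part is two-dimensional, splitting under complex conjugation into a $+$ and a $-$ eigenline, each defined over $\Q$ since the $a_n$ are rational. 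Choosing $\Q$-rational generators $\omega^{\pm}$ of these eigenlines, one gets periods $\Omega^{\pm}$ such that every critical period integral, after projecting to the appropriate eigenspace according to the parity of $n$, becomes an algebraic (here rational) multiple of $\Omega^{\pm}$.

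Next I would match these abstract periods $\Omega^{\pm}$ with the specific normalizations $\Omega_{\pm}(f)$ defined in \eqref{5} and \eqref{6}. The point is that $\Omega_-(f)$ is, by \eqref{5}, a fixed nonzero rational multiple of $(2\pi i)^{-1}L(f,1)$, and $L(f,1)$ is itself $(2\pi i)$ times the real-period integral against the $-$ eigensymbol; hence $\Omega_-(f)$ differs from the abstract $\Omega^-$ by a nonzero element of $\Q$ (nonvanishing of $L(f,1)$ here being automatic since $s=1$ is critical and lies away from $s=k/2$ as $k>2$, or more elementarily because the Euler product converges there when $k>3$ and the functional equation handles $k=3$—but $k$ is even so $k\ge 4$; for $k=4$ this is exactly the content already used). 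Likewise, $\Omega_+(f)$ is a nonzero rational multiple of $(2\pi i)^{-2}L(f,2)$ by \eqref{6}, and $L(f,2)\ne 0$ is guaranteed for $k>4$ by convergence/functional-equation considerations and is precisely Hypothesis H1 when $k=4$; so $\Omega_+(f)$ differs from $\Omega^+$ by a nonzero rational number. Substituting back, for $n$ odd we get $(2\pi i)^{-n}L(f,n)/\Omega_-(f) \in \Q$ and for $n$ even $(2\pi i)^{-n}L(f,n)/\Omega_+(f)\in\Q$, which is exactly (i) and (ii).

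The main obstacle is not conceptual but lies in bookkeeping the transcendental factors so that the two statements come out with the asserted parity pattern and with the denominator being exactly $\Omega_{\pm}(f)$ rather than some twisted or shifted period: one must be careful that the action of complex conjugation on the path $\{0,i\infty\}$ and on $\omega_f$ interacts with the power $z^{n-1}$ so that odd $n$ pairs with the $-$ part and even $n$ with the $+$ part, and that the factor $(2\pi i)^{-n}$ rather than $(2\pi)^{-n}$ or $(2\pi i)^{-n}/\Gamma(n)$ is the right one to absorb into $\Q$ after normalizing $L(f,1)$ and $L(f,2)$ as in \eqref{5}–\eqref{6}. Since the rationality of modular symbols and the Eichler--Shimura isomorphism are exactly the content of \cite{S1}, \cite{S2}, and the only genuinely new point over those references is the explicit normalization of the naive periods $\Omega_{\pm}(f)$, I would simply cite Shimura for the existence of \emph{some} pair of periods making the ratios algebraic, and then do the short computation identifying those periods with $\Omega_{\pm}(f)$ up to $\Q^{\times}$.
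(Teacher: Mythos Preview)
The paper does not give its own proof of this theorem: it is stated with the attribution ``(See \cite{S1},\cite{S2})'' and no argument, so there is no paper's proof to compare against. Your proposal is a correct outline of the standard Shimura argument via Eichler--Shimura and modular symbols, and the reduction step --- showing that the naive periods $\Omega_{\pm}(f)$ of \eqref{5}, \eqref{6} agree with abstract Shimura periods up to $\Q^{\times}$ by specializing Shimura's own theorem at $n=1,2$ --- is exactly right.

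One small correction in your nonvanishing discussion: the Euler product for $L(f,s)$ converges only for $\mathrm{Re}(s) > 1 + (k-1)/2$, so it never converges at $s=1$. The nonvanishing of $L(f,1)$ (hence of $\Omega_-(f)$) comes instead from the functional equation \eqref{4} relating it to $L(f,k-1)$, and $s=k-1$ \emph{does} lie in the region of absolute convergence once $k\ge 4$; this is precisely the argument the paper uses just after \eqref{5} to conclude $\Omega_-(f)$ is a nonzero purely imaginary number. Your treatment of $\Omega_+(f)$ (Euler-product nonvanishing at $s=k-2$ for $k>4$, Hypothesis H1 for $k=4$) is fine.
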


\medskip

In what follows, we shall mainly be interested in the $L$-functions of $f$ twisted by Artin characters. We rapidly recall the definitions of these $L$-functions. By an Artin representation, we mean a homomorphism
\begin{equation}\label{10'}
\phi\,:\, \Gal(\bar{\Q}/\Q) \to \Aut_{\bar {\Q}}(W)
\end{equation}
which factors through the Galois group of a finite  extension of $\Q$; here $W$ is a vector space of finite dimension over $\bar{\Q}$. Put
$$
d(\phi)=\,{\rm dim}_{\bar{\Q}}(W).
$$
For each prime $p$, let
$$
M_p(f)=V_p\otimes_{\Q_p} \bar{\Q}_p,~~~ M_p(\phi)=W\otimes_{\bar {\Q}} \bar{\Q}_p.
$$
Then
\begin{equation}\label{eulp}
L(f,\phi,s)=\underset{q}{\Pi}\,P_q(f,\phi,q^{-s})^{-1},
\end{equation}
where
\begin{equation}\label{10''}
P_q(f,\phi,X)={\rm det}\left((1-{\rm  Frob}_q^{-1}X)\mid (M_p(f)\otimes_{\bar{\Q}_p}M_p(\phi))^{I_q}\right) ~~~(q\neq p)
\end{equation}
is the Euler product attached to the tensor product Galois representation  $M_p(f)\otimes_{\bar{\Q}_p} M_p(\phi).$
The Euler product \eqref{eulp} converges in the region ${\rm Re}(s) > 1+(k-1)/2. $
It is one of the fundamental problems of number theory to prove the analytic continuation and the following
conjectural functional equation for $L(f,\phi,s).$ Let $N(f,\phi)$ be the conductor of the family of  $p$-adic representations  $M_p(f)\otimes_{\bar{\Q}_p}M_p(\phi),$ and define
\begin{equation}\label{11}
\La(f,\phi,s)=N(f,\phi)^{s/2}\left((2\pi)^{-s}\Gamma(s)\right)^{d(\phi)}L(f,\phi,s).
\end{equation}
Then conjecturally
\begin{equation}\label{12}
\La(f,\phi,s)=w(f,\phi)\Lambda(f,\hat{\phi},k-s),
\end{equation}
where $w(f,\phi)$ is an algebraic number of complex absolute value 1, and $\hat{\phi}$ is the contragredient representation of $\phi.$  There is one important case in which this result is known.
\begin{thm}\label{sha}
Let $K$ be any finite Galois extension of $\Q$ with Galois group $\Gal(K/\Q)$ abelian. Let $\psi$ be an abelian character of $K$ and define $\phi$ to be the induced character of $\Gal(\bar{\Q}/\Q).$ Then $\La(f,\phi,s)$ is entire and satisfies the functional equation \eqref{12}.
\end{thm}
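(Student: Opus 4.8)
The plan is to deduce the theorem from the theory of automorphic $L$-functions for $\mathrm{GL}_2$ over the number field $K$, by combining solvable base change with the inductivity of the local $L$-, $\varepsilon$- and $\Gamma$-factors. Let $\pi=\pi(f)$ be the cuspidal automorphic representation of $\mathrm{GL}_2(\mathbb{A}_{\Q})$ attached to $f$, and regard the given $1$-dimensional Artin representation $\psi$ of $\Gal(\bar{\Q}/K)$ as a finite-order Hecke character of $K$ via class field theory, so that $\phi=\mathrm{Ind}_{\Gal(\bar{\Q}/K)}^{\Gal(\bar{\Q}/\Q)}\psi$. First I would apply the projection formula on the Galois side: for each prime $p$ there is a natural isomorphism
\begin{equation*}
M_p(f)\otimes_{\bar{\Q}_p}\mathrm{Ind}_{\Gal(\bar{\Q}/K)}^{\Gal(\bar{\Q}/\Q)}M_p(\psi)\;\cong\;\mathrm{Ind}_{\Gal(\bar{\Q}/K)}^{\Gal(\bar{\Q}/\Q)}\bigl(M_p(f)|_{\Gal(\bar{\Q}/K)}\otimes_{\bar{\Q}_p}M_p(\psi)\bigr),
\end{equation*}
and, comparing Euler factors place by place (including the bad and archimedean ones), this yields $L(f,\phi,s)=L(f_{/K},\psi,s)$, where $f_{/K}$ denotes the base change of $f$ to $K$. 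Inductivity of conductors (the conductor--discriminant formula) and of archimedean $\Gamma$-factors then upgrades this to the assertion that the completed function $\La(f,\phi,s)$ of \eqref{11} coincides with the completed $L$-function over $K$ of $f_{/K}$ twisted by $\psi$.

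Since $\Gal(K/\Q)$ is abelian, $K/\Q$ is a tower of cyclic extensions, so iterating Langlands' cyclic base change for $\mathrm{GL}_2$ (or invoking the solvable base change of Arthur--Clozel) produces an automorphic representation $\pi_K$ of $\mathrm{GL}_2(\mathbb{A}_K)$ with $L(\pi_K,s)=L(f_{/K},s)$ in the appropriate normalization. Twisting by the Hecke character $\psi$ keeps $\pi_K\otimes\psi$ automorphic on $\mathrm{GL}_2(\mathbb{A}_K)$, and the theory of Jacquet--Langlands (equivalently, Godement--Jacquet in rank $2$) supplies the analytic continuation and functional equation of $L(\pi_K\otimes\psi,s)$, together with a global root number that is a finite product of local constants, hence algebraic and of complex absolute value $1$. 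Feeding this back through the identification of the first step, and using that the contragredient $\hat{\phi}$ equals $\mathrm{Ind}_{\Gal(\bar{\Q}/K)}^{\Gal(\bar{\Q}/\Q)}\psi^{-1}$, one concludes that $\La(f,\phi,s)$ is entire and satisfies the functional equation \eqref{12} with $w(f,\phi)$ the root number of $\pi_K\otimes\psi$.

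There is one genuine case that must be treated separately: $\pi_K$ need not be cuspidal. By the base change criterion this occurs precisely when $f$ has complex multiplication by a quadratic field $E\subseteq K$, in which case $\pi_K$ is an isobaric sum $\eta_1\boxplus\eta_2$ of two Hecke characters of $K$ --- the base changes to $K$ of the CM Hecke character of $E$ and of its conjugate under $\Gal(E/\Q)$ --- so that $L(f_{/K},\psi,s)=L(\eta_1\psi,s)\,L(\eta_2\psi,s)$ is a product of two Hecke $L$-functions over $K$. Each factor is entire and satisfies its own functional equation by Hecke/Tate, because $\eta_i\psi$ is a nontrivial Hecke character: it has the same infinity type as $\eta_i$, which is nontrivial since $f$ has weight $k>2$. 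Hence the entireness of $\La(f,\phi,s)$ persists in this degenerate case as well. I expect that the only real labour lies in the bookkeeping of the first step --- matching the exact shape of the conductor $N(f,\phi)$ and of the $\Gamma$-factor of \eqref{11} with the automorphic data over $K$ (in particular at the complex places of $K$) and identifying $w(f,\phi)$ with the product of the relevant local $\varepsilon$-factors; the analytic content is contained entirely in base change together with the classical theory of $\mathrm{GL}_2$ automorphic $L$-functions.
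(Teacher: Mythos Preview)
Your proposal is correct and follows essentially the same route as the paper: identify $L(f,\phi,s)$ with the $\psi$-twist of the base-changed $L$-function over $K$ via inductivity, then invoke abelian base change for $\mathrm{GL}_2$ together with Jacquet--Langlands to obtain the analytic continuation and functional equation. Your treatment is in fact more careful than the paper's terse argument, which simply asserts that the base change $\pi_K(f)$ is a \emph{cuspidal} automorphic representation of $\mathrm{GL}_2/K$; you correctly observe that this fails precisely when $f$ has complex multiplication by an imaginary quadratic field $E\subseteq K$, and you dispose of that case separately by factoring into Hecke $L$-functions and noting their nontrivial infinity type. In the paper's actual applications (Theorems~\ref{2.7} and~\ref{2.3}) the relevant field $K=K_r$ is always the maximal totally real subfield of $\Q(\mu_{p^r})$, so the CM obstruction never arises and the paper's shortcut is harmless there; but for the theorem as stated your extra paragraph is needed.
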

\proof
Since $K$ is an abelian extension of $\Q$, the base change of $f$ to $K$, which we denote by $\pi_K(f)$, exists as a cuspidal automorphic representation of ${\rm GL}_2/K$. The results of Jacquet-Langlands  then establish the analytic continuation and functional equation for the automorphic $L$-function of $\pi_K(f)$, twisted by the abelian character $\psi$ of  $K$, which
we view as a Hecke character of ${\rm GL}_1/K.$  We denote this automorphic $L$-function by $L(\pi_K(f),\psi,s).$ On the other hand, by the theory of base change, and the local Langlands correspondence for ${\rm GL}_2$,  $L(\pi_K(f),\psi,s)$ coincides with $L(f,\phi,s)$ defined by the Euler product \eqref{eulp}.
This completes the proof on
 noting that  the functional equation \eqref{12} coincides with the automorphic functional equation.
\qed

\medskip

The following conjectural generalisation of Theorem \ref{shim1} is folklore. Given an Artin representation $\phi$ as in \eqref{10'}, define $d^+(\phi)$ (resp. $d^-(\phi)$) to be the dimension of the subspaces of $W$ on which complex conjugation acts like $+1$,  (resp. as $-1$). If $n$ is any integer, we write
\begin{equation}\label{14}
d^+_n(\phi) =  d^{(-1)^n}(\phi),~~~~d^-_n(\phi) =  d^{(-1)^{n+1}}(\phi).
\end{equation}

\begin{conj}\label{alg}
For every Artin representation $\phi$ of $\Gal({\bar \Q}/\Q),$ and all integers $n=1,\ldots,k-1$, we have
\begin{equation}\label{15}
\frac{L(f,\phi,n)}{\left((2 \pi i)^{nd(\phi)}\times\Omega_+(f)^{d^+_n(\phi)}\times\Omega_-(f)^{d^-_n(\phi)}\right)}
\in \bar\Q.
\end{equation}
\end{conj}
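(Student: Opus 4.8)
The plan is to reduce the algebraicity asserted in \eqref{15} to the case of abelian (Dirichlet) twists, where it is due to Shimura \cite{S1}, \cite{S2} and is recorded in Theorem \ref{shim1}, by combining Brauer's induction theorem with automorphic base change, following the argument of \cite{BD1}. It should be stressed at the outset that a proof for \emph{all} Artin representations $\phi$ is not within reach: it presupposes the analytic continuation of $L(f,\phi,s)$ to the points $s=n$, which at present is known only when $\phi$ cuts out a solvable extension, and, more subtly, it presupposes period relations under base change that are themselves only conjectural in general. For the characters of $G=\Gal(\Fin/\Q)$ that concern us both ingredients are available, and the strategy below then yields \eqref{15} for such $\phi$. (Under Hypothesis H1 one has $\Omega_{\pm}(f)\neq 0$ for every $k$, so the ratio in \eqref{15} is meaningful.)

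First I would fix a finite Galois extension $K/\Q$ through which $\phi$ factors and apply Brauer's theorem to write $\phi=\sum_i n_i\,\mathrm{Ind}_{K_i}^{\Q}\chi_i$ with $n_i\in\Z$, where $K_i$ runs through subfields of $K$ and each $\chi_i$ is a one-dimensional character of $\Gal(\bar{\Q}/K_i)$. Since the Euler product \eqref{eulp} is inductive in $\phi$, one obtains $L(f,\phi,s)=\prod_i L(f,\mathrm{Ind}_{K_i}^{\Q}\chi_i,s)^{n_i}=\prod_i L(f_{K_i},\chi_i,s)^{n_i}$, where $f_{K_i}$ is the base change of $f$ to $K_i$ --- a cuspidal automorphic form on ${\rm GL}_2/K_i$ whose existence, in the cases at hand, follows as in Theorem \ref{sha} from solvable base change (Langlands, extended by Arthur and Clozel), and which is available for every $K_i\subseteq\Fin$. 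The points $s=1,\dots,k-1$, being critical for $L(f,s)$ by the shape of the $\Gamma$-factor in \eqref{3}, remain critical for each $L(f_{K_i},\chi_i,s)$, and any apparent poles among the factors cancel because $L(f,\phi,s)$ is entire. Applying the generalisation of Theorem \ref{shim1} to base-changed forms twisted by finite-order Hecke characters (Shimura's theory of critical values of Hilbert modular $L$-functions when $K_i$ is totally real, and its extension to the remaining signatures; see \cite{BD1}), each value $L(f_{K_i},\chi_i,n)$, divided by $(2\pi i)^{n[K_i:\Q]}$, by a suitable monomial in the archimedean periods of $f_{K_i}$ (one period per infinite place of $K_i$, the sign governed by $n$ and by $\chi_i$ at that place), and by an algebraic Gauss-sum factor, lies in $\bar{\Q}$. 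Multiplying over $i$ with exponents $n_i$, the total power of $2\pi i$ becomes $n\sum_i n_i[K_i:\Q]=n\,d(\phi)$.

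It remains to recombine the periods, and this is the crux. What is needed is that, up to $\bar{\Q}^{\times}$, the archimedean periods of $f_{K_i}$ decompose as monomials in $\Omega_{+}(f)$ and $\Omega_{-}(f)$ according to how complex conjugation splits along $K_i/\Q$, in such a way that, after forming the weighted product over the Brauer decomposition, the net exponents collapse to exactly $d^{+}_n(\phi)$ and $d^{-}_n(\phi)$. The combinatorial half of this is the elementary identity $d^{\pm}_n(\phi)=\sum_i n_i\cdot\#\{\text{infinite places of }K_i\text{ of the type appropriate to }\chi_i\text{ and }n\}$, which follows from \eqref{14} and the description of complex conjugation on an induced representation. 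The analytic half --- that base change and automorphic induction multiply the canonically normalised periods of \eqref{5} and \eqref{6} in precisely this pattern, up to algebraic factors --- is the hard point; it is known only in pieces (Shimura's period relations for Hilbert modular forms, the archimedean comparisons underlying Deligne's conjecture on critical values, and the cases assembled in \cite{BD1}), and it is this period comparison that I expect to be the main obstacle to the general statement \eqref{15}.
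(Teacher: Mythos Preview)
This is stated in the paper as a conjecture, not a theorem, and you correctly flag that a general proof is out of reach, with the base-change period comparison as the obstruction. For the special cases the paper does establish (Theorems~\ref{2.7} and~\ref{2.3}, covering $\phi$ induced from an abelian character of an imaginary abelian field, and in particular all irreducible $\phi$ factoring through $G$), the argument differs from your Brauer-plus-base-change outline in a substantive way. Every such irreducible $\phi$ is already monomial from the imaginary abelian field ${\cK}_r$, so no Brauer step is taken; but rather than base-changing $f$ to ${\cK}_r$ (where Shimura's Hilbert-modular algebraicity is not directly available, and your appeal to ``its extension to the remaining signatures'' is exactly the unproven input), the paper passes to the maximal totally real subfield $K_r$, realises the two-dimensional induction as a parallel-weight-one Hilbert form $\g$ over $K_r$, and identifies $L(f,\phi,s)$ with the Rankin product ${\mathcal D}(\f_{K_r},\g,s)$ via Lemma~\ref{pqf}. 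Shimura's Theorem~4.2 of \cite{S3} then gives algebraicity relative to $\pi^{\beta k}\langle\f_{K_r},\f_{K_r}\rangle_{K_r}$, and the entire period comparison collapses to the single Proposition~\ref{2.4}, that this Petersson norm equals $(\Omega_+(f)\Omega_-(f))^{\beta}$ up to a nonzero rational; that proposition is itself proven by a second Rankin identity, with an auxiliary $\g$ built from Dirichlet characters supplied by Rohrlich's non-vanishing theorem (Lemma~\ref{2.5}). Since $d_n^+(\phi)=d_n^-(\phi)=\beta$ for these $\phi$, only the product $\Omega_+\Omega_-$ is ever needed, and the place-by-place splitting of the periods of $f_{K_i}$ that your route demands is never invoked. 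The Rankin approach over the totally real subfield is thus not a specialisation of your outline but a genuine alternative that sidesteps the very obstacle you identify; this is also what \cite{BD1} actually does in weight~$2$.
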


\medskip

Of course, when $\phi$ has dimension 1, this conjecture is a well known consequence of the theory of higher weight modular symbols. However, as in \cite{BD1}, we shall study special cases of this conjecture by using the work of Shimura \cite{S1} on the special values of Rankin products of Hilbert modular forms for totally real number fields.
Let $K$ be an arbitrary totally real field, which is Galois over $\Q$, with  $\Gal(K/\Q)$ abelian.
Take $\g$ to be any Hilbert modular form relative to $K$, which corresponds to an Artin representation
$\theta$ of dimension 2 of $\Gal(\bar{\Q}/K)$.  The form $\g$ has  parallel weight 1 and level equal to the conductor of $\theta$. We denote the Artin $L$-series of $\theta$ by
$$
L(\theta, s)=\underset{\frak a}{\sum}\, c(\frak a)(N {\frak a})^{-s},
$$
where $\frak a$ runs over all integral ideals of $K$. Further,  let $L(f/K,s)$ be the complex $L$-function attached to the restriction of the Galois representation \eqref{3} to $\Gal (\bar{\Q}/K),$ and write
$$
L(f/K,s)=\underset{\frak a}{\sum}\, b(\frak a)(N \frak a)^{-s},
$$
for its corresponding Dirichlet series. Since we have assumed $K$ to be an abelian extension
of $\Q$, the base change to $K$ of our modular form $f$ also exists as a primitive cusp form for the Hilbert modular group of $K$. We denote this base change by $\frak f_K$.  It has parallel weight $k$, trivial character, and level dividing $N{\mathcal O}_K$, where ${\mathcal O}_K$ is the ring of integers of $K$. In what follows,
we will be primarily interested in the complex $L$-series defined by the tensor product of the Artin representation $\theta$ and the Galois representation \eqref{3} of $f$ restricted to $\Gal(\bar{\Q}/K).$  We denote this $L$-series by $L(\f_K,\theta,s)$, and recall that it is defined by the Euler product
$$
L(\f_K,\theta,s)=\underset{v}{\prod}\, P_v(\f_K,\theta,(Nv)^{-s})^{-1},
$$
where $v$ runs over all finite places of $K$, and
\begin{equation}\label{19}
P_v(\f_K,\theta,X)= \det \left(1-{\rm Frob}_v^{-1}X\mid \left( M_p(f)\otimes_{\bar{\Q}_p}W_{\theta}\right)^{I_v}\right);
\end{equation}
here $W_{\theta}$ is a two dimensional $\bar{\Q}_p$-vector space realizing $\theta$, and $I_v$ is the inertial subgroup of a place of $\bar{\Q}$ above $v$.   Of course, by the inductive property of $L$-functions, we also have
\begin{equation}\label{lind}
L(\f_K,\theta,s)=L(f,\phi_{\theta},s),
\end{equation}
where $\phi_{\theta}$ is the Artin representation of $\Gal(\bar{\Q}/\Q)$ induced from the representation $\theta$ of $\Gal(\bar{\Q}/K).$

\medskip

On the other hand, the classical theory of Rankin products  (see \cite[\S 4]{S3}) considers instead the complex $L$-series ${\mathcal D}(\f_K,\g,s)$ defined by
\begin{equation}\label{20}
{\mathcal D}(\f_K,\g,s)=L_{\frak n}(\psi, 2s-k+1)\times \underset{\a}{\sum}\, c(\a)b (\a)N(\a)^{-s}
\end{equation}
with $\a$ running over all integral ideals of $K$; here $\n$ is the least common multiple of the levels of $\f_K$ and $\g$, $\psi$ is the character of $\g$, and $L_{\n}(\psi,s)$ is the imprimitive $L$-series of $\psi$ where the Euler factors at the primes dividing $\n$ have been omitted.
A well-known classical argument shows that ${\mathcal D}(\f_K,\g,s)$ has the Euler product expansion
\begin{equation}\label{21}
{\mathcal D}(\f_K,\g,s)=\underset{v}{\prod}\, D_v(\f_K,\g,(N v)^{-s})^{-1}
\end{equation}
where
\begin{equation}\label{22}
D_v(\f_K,\g,X)=\det \left(1-{\rm Frob}_v^{-1}X\mid \left( M_p(f)^{I_v}\otimes_{\bar{\Q}_p}W_{\theta}^{I_v}\right)\right).
\end{equation}
Thus the complex $L$-functions $L(\f_K,\theta,s)$ and ${\mathcal D}(\f_K,\g,s)$ coincide, except for the possible finite set of Euler factors  at places $v$ for which
\begin{equation}\label{21'}
\left( M_p(f)\otimes_{\bar{\Q}_p}W_{\theta}\right)^{I_v}\neq \left( M_p(f)^{I_v}\otimes_{\bar{\Q}_p}W_{\theta}^{I_v}\right).
\end{equation}

\medskip
\noindent To avoid this technical difficulty, we impose an additional simplifying hypothesis.

\medskip

\begin{lem}\label{pqf}
Assume that for each prime $q$ such that $q^2\mid N$, that $q$ does not divide the conductor of the representation of $\Gal(\bar{\Q}/\Q)$ induced from $\theta.$ Then for every prime number $p$, and
every finite place $v$ of ${\cK}$ which does not lie above $p$,  we have
\begin{equation}\label{mpf}
\left(M_p(f)\otimes_{\bar{\Q}_p} W_{\theta})\right)^{I_v}=(M_p(f))^{I_v}\otimes_{\bar{\Q}_p} (W_{\theta})^{I_v},
\end{equation}
where $I_v$ denotes the inertial subgroup at $v$. In particular, ${\mathcal D}(\f_K,\g,s)=L(\f_K,\theta,s).$
\end{lem}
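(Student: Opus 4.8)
The plan is to unwind the definitions and reduce \eqref{mpf} to a purely local statement at each finite place $v$ of $K$ not above $p$, say lying above the rational prime $q$ (so $q$ is different from the residue characteristic of $v$). The inclusion $(M_p(f))^{I_v}\otimes_{\bar{\Q}_p}(W_\theta)^{I_v}\subseteq \big(M_p(f)\otimes_{\bar{\Q}_p}W_\theta\big)^{I_v}$ is automatic, so only the reverse inclusion needs proof, and it is trivially an equality as soon as $I_v$ acts trivially on one of the two tensor factors. I would therefore split into three cases according to $\mathrm{ord}_q(N)$. If $q\nmid N$, then since the conductor of $\tau_p$ is $N$ and $v$ does not lie above $q$, the group $I_v$ acts trivially on $M_p(f)$ and \eqref{mpf} is clear. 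If $q^2\mid N$, I would invoke the conductor--discriminant formula $\mathfrak f(\phi_\theta)=\mathfrak d_{K/\Q}^{\,2}\cdot \mathrm{Nm}_{K/\Q}(\mathfrak f(\theta))$ (recall $\dim\theta=2$): by hypothesis $q\nmid\mathfrak f(\phi_\theta)$, so each factor on the right is prime to $q$; in particular $\mathrm{Nm}_{K/\Q}(\mathfrak f(\theta))$ is prime to $q$, hence $\theta$, and therefore $W_\theta$, is unramified at $v$, and \eqref{mpf} again follows at once.

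The one case requiring real work is $q\parallel N$. Here $f$ has trivial character and conductor exactly divisible by $q$, so $f$ is of multiplicative (Steinberg) type at $q$; by local--global compatibility for the Galois representation attached to $f$ (Langlands--Deligne--Carayol--Saito), $M_p(f)$ restricted to a decomposition group at $q$ is, up to an unramified twist, the special two-dimensional representation. Concretely, there is a basis $\{e_1,e_2\}$ of $M_p(f)$ in which each $u$ in the inertia group at $q$ acts by $e_1\mapsto e_1$, $e_2\mapsto e_2+c(u)e_1$, where $c$ is a nonzero continuous homomorphism from (tame) inertia to $(\bar{\Q}_p,+)$; being nonzero with values in a torsion-free group, $c$ has infinite image and hence vanishes on no open subgroup, and in particular $c|_{I_v}\neq 0$. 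I would emphasise that $M_p(f)|_{I_v}$ is \emph{not} semisimple here, so the naive heuristic ``$(V\otimes W)^G=V^G\otimes W^G$ unless $V$ and $W^{\vee}$ share a nontrivial irreducible constituent'' predicts the wrong answer, and a direct cocycle computation is unavoidable; this is the main obstacle.

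That computation is short. Take an invariant vector $x=e_1\otimes w_1+e_2\otimes w_2$ in $\big(M_p(f)\otimes_{\bar{\Q}_p}W_\theta\big)^{I_v}$. For $u\in I_v$ one has $u\cdot x=e_1\otimes\big(uw_1+c(u)\,uw_2\big)+e_2\otimes(uw_2)$, so invariance forces $w_2\in W_\theta^{I_v}$ and $(1-u)w_1=c(u)\,w_2$ for all $u\in I_v$. Restricting the last identity to $I_v^0=\ker\!\big(I_v\to \mathrm{GL}(W_\theta)\big)$, which is open because $\theta$ is an Artin representation and hence has finite image, the left-hand side vanishes, so $c(u)\,w_2=0$ for all $u\in I_v^0$; since $c$ does not vanish on the open subgroup $I_v^0$ we get $w_2=0$. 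Then $x=e_1\otimes w_1$ with $w_1\in W_\theta^{I_v}$, and as $e_1$ is fixed by $I_v$ this puts $x$ in $M_p(f)^{I_v}\otimes_{\bar{\Q}_p}W_\theta^{I_v}$, proving \eqref{mpf} at every $v\nmid p$.

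Finally, comparing the local Euler factors \eqref{19} and \eqref{22}, the equality \eqref{mpf} shows $P_v(\f_K,\theta,X)=D_v(\f_K,\g,X)$ for every $v\nmid p$, so \eqref{21'} can only fail, if at all, at the finitely many places above $p$; but applying the same three-case analysis to an $\ell$-adic realization of $M(f)$ with $\ell$ different from the residue characteristic of such a $v$, together with the independence of $\ell$ of the local $L$-factors, handles those places as well. Hence the Euler products agree term by term and ${\mathcal D}(\f_K,\g,s)=L(\f_K,\theta,s)$. In summary, the only nontrivial input is the Steinberg case, where the non-semisimplicity of $M_p(f)|_{I_v}$ forces the explicit argument above rather than a representation-theoretic shortcut, and where one relies on the (standard but substantial) local--global compatibility describing $M_p(f)$ at primes exactly dividing $N$.
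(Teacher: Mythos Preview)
Your proof is correct and follows the paper's three-case split on $\ord_q(N)$; the key insight in the $q\parallel N$ case (that inertia acts on $M_p(f)$ through an infinite quotient, so the invariants of $M_p(f)$ do not grow on passing to open subgroups) is the same, though you unwind it via an explicit cocycle computation while the paper argues more abstractly: since $M_p(f)^{I_v}$ is one-dimensional and the same holds for every open $I'_v\subseteq I_v$, one has $M_p(f)^{I_v}=M_p(f)^{I'_v}$, and restricting to $I'_v=\ker(\theta|_{I_v})$ immediately gives the desired inclusion. One small slip: in your $q\nmid N$ case you wrote ``$v$ does not lie above $q$'' where you meant $p$.
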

\proof
Suppose that $v$ lies above a prime $q$, where $q\neq p$. Assume first  that $(q,N)=1.$ Then $I_q,$  and hence also $I_v$, acts trivially on $M_p(f),$ and so \eqref{mpf} is plain. Suppose next that $q$ divides $N$ but $q^2$ does not divide $N$. Then it is well known that the image of $I_q,$
hence also that of $I_v,$ in the automorphism group of $M_p(f)$ is infinite, and that $M_p(f)^{I_q}$ has dimension one over $\bar{\Q}_p.$  Clearly the same assertions remain valid if we replace $I_q$ by any open subgroup $I'_q$ of $I_q$. Thus we must have $M_p(f)^{I_v}=M_p(f)^{I'_v}$ for every open subgroup $I'_v$ of $I_v.$ Since some open subgroup of $I_v$ acts trivially on $W_{\theta}$, \eqref{mpf} follows immediately.
Finally, if $q^2$ divides $N$,  the hypothesis of the lemma  shows that $I_v$ acts trivially on $W_{\theta}$, whence \eqref{mpf} is again clearly true.

\qed

\medskip

Our next result relates the automorphic period of $\f_K$ to the periods $\Omega^+(f)$ and $\Omega^-(f).$ We normalize the Petersson inner product on the space of cusp forms of level dividing $N{\mathcal O}_K$ for the Hilbert modular group of $K$ as in \cite{S3} (see formula (2.7) on p. 651).

\begin{prop}\label{2.4}
Let $K$ be a real abelian field, and write $\f_K$ for the base change of $f$ to $K$. Then
\begin{equation}\label{17'}
\frac{(2\pi i)^{(1-k)\beta} \pi^{\beta k}\langle \f_K, \f_K \rangle_K}{\left(\Omega_+(f)\times \Omega_-(f)\right)^{\beta}} \in \Q,
\end{equation}
where $\beta=[K:\Q].$
\end{prop}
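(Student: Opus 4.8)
The plan is to reduce the statement to the known algebraicity properties of Petersson norms of Hilbert modular forms over $K$ together with the factorization of the Rankin–Selberg $L$-function of $\f_K$ with itself. First I would recall Shimura's result (as in \cite{S1}, \cite{S3}) that for a primitive Hilbert cusp form $\f_K$ of parallel weight $k$ over the totally real field $K$ of degree $\beta$, the Petersson norm $\langle \f_K,\f_K\rangle_K$ is, up to an explicit power of $\pi$ and an algebraic factor, the value at the edge of the critical strip of the symmetric square (equivalently the adjoint) $L$-function of $\f_K$; more precisely $\langle \f_K,\f_K\rangle_K$ is algebraic multiple of $\pi^{?}\, L(\mathrm{Sym}^2\f_K, k)$ in the normalization of \cite{S3}. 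Since $f$ has rational Fourier coefficients and $K/\Q$ is abelian, base change gives $L(\mathrm{Sym}^2\f_K,s) = \prod_{\chi} L(\mathrm{Sym}^2 f \otimes \chi, s)$, the product being over the characters $\chi$ of $\Gal(K/\Q)$. Each factor $L(\mathrm{Sym}^2 f\otimes\chi,s)$ has its critical value at $s=k$ controlled, again by Shimura over $\Q$, by the product $\Omega_+(f)\,\Omega_-(f)$ up to an algebraic number and the appropriate power of $2\pi i$ — indeed the symmetric square value factors through the pair of naive periods of $f$ exactly in the combination $\Omega_+(f)\,\Omega_-(f)$, which is why that product, rather than either period separately, appears in \eqref{17'}.

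Carrying this out, the key steps in order would be: (1) write down Shimura's period formula for $\langle \f_K,\f_K\rangle_K$ in the precise normalization fixed just before the statement (formula (2.7) of \cite{S3}), isolating the power of $\pi$ and tracking that the remaining factor is $L(\mathrm{Sym}^2\f_K, k)$ up to $\overline\Q^\times$; (2) use abelian base change to factor this over $\Gal(K/\Q)$ into a product of $\beta$ symmetric square $L$-values of $f$ twisted by Dirichlet characters, each evaluated at $s=k$; (3) invoke the algebraicity of each such twisted symmetric square value at $s=k$ in terms of $(2\pi i)^{?}\,\Omega_+(f)\,\Omega_-(f)$ — this is the $\phi = \mathrm{Sym}^2$-type instance of Conjecture~\ref{alg} that is in fact a theorem of Shimura; (4) multiply the $\beta$ factors together and bookkeep the powers of $2\pi i$ and $\pi$, matching them against $(2\pi i)^{(1-k)\beta}\pi^{\beta k}$ and $(\Omega_+(f)\Omega_-(f))^{\beta}$ to see that the quotient in \eqref{17'} is the product of $\beta$ algebraic numbers, hence algebraic (and in fact, since $f$ has rational coefficients and everything is Galois-stable, one expects it lands in $\Q$, which is the stated conclusion).

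The main obstacle I anticipate is purely bookkeeping rather than conceptual: getting the exponents of $\pi$ (and of $2\pi i$) to match exactly. Shimura's formulas for Petersson norms of Hilbert modular forms carry $\Gamma$-factors and powers of $\pi$ that depend on the weight $k$, the degree $\beta$, and the discriminant of $K$, and the various normalizations (of the $L$-function, of the Petersson product, of the periods $\Omega_\pm$ as fixed in \eqref{5}–\eqref{6}) must all be reconciled; a sign or factor-of-$2$ error here is easy to make. A secondary subtlety is ensuring that the discriminant factor $d_K^{?}$ and any conductor contributions are rational — but since $K$ is abelian over $\Q$, $d_K$ is a rational integer, so this is harmless. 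Once the exponent matching is done the rationality (rather than mere algebraicity) follows from the fact that $L(f,s)$, $\Omega_\pm(f)$, and the whole construction are defined over $\Q$ and invariant under $\Gal(\overline\Q/\Q)$, so I would close by a short Galois-descent remark rather than a computation.
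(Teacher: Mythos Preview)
Your strategy via the symmetric square is a genuinely different route from the paper's. The paper does not touch $\mathrm{Sym}^2$ at all. Instead it picks an auxiliary weight-one Hilbert form $\g$ over $K$: one chooses an odd Dirichlet character $\eta$ (cutting out $K(i)/K$) and, using Rohrlich's non-vanishing theorem (packaged as Lemma~\ref{2.5}), a Dirichlet character $\chi$ so that all the relevant $L$-values are nonzero; then $\g$ is the weight-one form attached to $\theta=\chi_K\oplus\chi_K\eta_K$. Shimura's Rankin--Selberg result \cite[Thm.~4.2]{S3} gives $\mathcal D(\f_K,\g,k/2)$ as an algebraic multiple of $\pi^{?}\langle\f_K,\f_K\rangle_K$, while the factorisation $\mathcal D(\f_K,\g,k/2)=\prod_j L(f,\chi\zeta_j,k/2)L(f,\chi\eta\zeta_j,k/2)$ together with \cite[Thm.~1]{S2} gives the same quantity as an algebraic multiple of $(2\pi i)^{?}(\Omega_+\Omega_-)^\beta$ (the pairing $\Omega_+\Omega_-$ appears because $\chi\zeta_j$ and $\chi\eta\zeta_j$ have opposite parity). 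Dividing the two expressions yields the proposition; rationality then comes from Galois invariance as you say.

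What this buys over your approach is that the comparison lands directly on \emph{standard} twisted $L$-values of $f$, whose periods are $\Omega_\pm$ by the most classical Shimura theorem. Your step~(3), by contrast, needs the algebraicity of $L(\mathrm{Sym}^2 f\otimes\chi,k)$ in terms of $\Omega_+(f)\Omega_-(f)$, and here your justification is off: this is \emph{not} an instance of Conjecture~\ref{alg}, which concerns twists of $f$ by Artin representations, whereas $\mathrm{Sym}^2$ of the Galois representation of $f$ has infinite image. The result you want is available (Sturm, Shimura), but it is usually stated as $L(\mathrm{Sym}^2 f\otimes\chi,k)/(\pi^{?}\langle f,f\rangle)\in\overline\Q$, so to convert to $\Omega_+\Omega_-$ you are implicitly using the $K=\Q$ case of the very proposition you are proving. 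That is not fatal---the $K=\Q$ case can be established separately---but it means your argument is really a reduction to $K=\Q$ plus a base-change step, and you should say so. The paper's method proves all $K$ uniformly, at the cost of needing the non-vanishing Lemma~\ref{2.5}; your method avoids any non-vanishing issue (the symmetric square value at $s=k$ is automatically nonzero) but requires the base case as an extra input.
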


We shall use the following notation in the proof of this proposition. If $\psi$ is any abelian character of $K$, write $L(f/K,\psi,s)$ for the primitive $L$-function attached to the tensor product of  $\psi$ with the restriction of \eqref{3} to $\Gal(\bar{\Q}/K).$  Also, for any abelian character $\chi$ of $\Q$, we write $\chi_K$ for the restriction of $\chi$ to $\Gal(\bar{\Q}/K).$

\begin{lem}\label{2.5}
Let $K$ be any  real abelian extension of $\Q$ and $\eta$ any abelian character of $\Q$. Then there exists an abelian character $\chi$ of $\Q$ as follows. For all $\sigma$ in $\Gal(\bar \Q/\Q)$, we have
\begin{enumerate}
\item $ L(f/K,\chi_K^{\sigma},k/2)\neq 0~{\rm and}~ L(f/K, \chi_K^{\sigma}\eta_K, k/2)\neq 0;$
\item $ L(f/K,\chi_K^{\sigma},s) ~({resp.}~L(f/K,\chi_K^{\sigma}\eta_K,s))$ has Euler factor equal to 1 at all places of $K$ where $\chi_K^{\sigma}$ (resp. $\chi_K^{\sigma}\eta_K$) is ramified.
\end{enumerate}
\end{lem}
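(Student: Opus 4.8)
The plan is to take $\chi$ to be a Dirichlet character of $\Q$ of $\ell$-power conductor for a suitably large auxiliary prime $\ell$, and to obtain both properties from the inductivity of $L$-functions together with Rohrlich's non-vanishing theorem for twists.

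First I would reduce everything to a statement about Dirichlet twists of $f$ over $\Q$. Since $K/\Q$ is abelian, the projection formula gives $\mathrm{Ind}_K^{\Q}\mathrm{Res}_K^{\Q}\chi=\chi\otimes\mathrm{Ind}_K^{\Q}\mathbf 1=\bigoplus_{\psi}\chi\psi$, with $\psi$ running over the finite group $\widehat{\Gal(K/\Q)}$, so by inductivity of $L$-functions (Theorem \ref{sha} and \eqref{lind})
\[
L(f/K,\chi_K^{\sigma},s)=\prod_{\psi}L(f,\chi^{\sigma}\psi,s),\qquad
L(f/K,\chi_K^{\sigma}\eta_K,s)=\prod_{\psi}L(f,\chi^{\sigma}\eta\psi,s),
\]
where $\chi^{\sigma}$ denotes the Galois conjugate of $\chi$ under $\sigma$. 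Thus it suffices to produce a Dirichlet character $\chi$ of $\Q$ such that (a) $L(f,\chi^{\sigma}\psi,k/2)\neq 0$ for all $\sigma\in\Gal(\bar\Q/\Q)$ and all $\psi$ in the finite set $T$ consisting of the characters of $\Gal(K/\Q)$ together with their products with $\eta$, and (b) $\chi$ is ramified only at primes not dividing $N$. Property (b) settles the Euler-factor condition: at a place $v$ of $K$ where $\chi_K^{\sigma}$ (resp. $\chi_K^{\sigma}\eta_K$) is ramified on account of $\chi$, the rational prime below $v$ does not divide $N$, so $M_p(f)$ is unramified at $v$ and, exactly as in the proof of Lemma \ref{pqf}, $(M_p(f)\otimes M_p(\chi_K^{\sigma}))^{I_v}=M_p(f)\otimes M_p(\chi_K^{\sigma})^{I_v}=0$, so the corresponding Euler factor in \eqref{eulp} is $1$; the only other ramified places of $\chi_K^{\sigma}\eta_K$ lie above the conductor of $\eta$, which one takes to be prime to $N$ as well (the situation in which the lemma is applied).

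The substantive point is (a). Here I would invoke Rohrlich's theorem on non-vanishing of twisted $L$-values: for a fixed cuspidal newform $g$ of weight $k$ for $\mathrm{GL}_2/\Q$ and a fixed prime $\ell$ not dividing its level, $L(g,\psi,k/2)\neq 0$ for all but finitely many Dirichlet characters $\psi$ of $\ell$-power conductor; since the exceptional $\psi$ are finite in number, hence of bounded conductor, there is an integer $n_0$ such that no exception has conductor $\ell^{n}$ with $n\geq n_0$. Fix now a single prime $\ell$ that is prime to $Np$, to the discriminant of $K$, and to the conductor of every character in $T$, and apply this to the newform underlying each of the finitely many twists $f\otimes\psi$, $\psi\in T$. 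This produces an integer $n_0$ such that $L(f,\chi'\psi,k/2)\neq 0$ for every $\psi\in T$ and every primitive Dirichlet character $\chi'$ of conductor $\ell^{n}$ with $n\geq n_0$.

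To conclude, fix $n\geq n_0$ and let $\chi$ be any primitive character of conductor $\ell^{n}$; for every $\sigma$ the conjugate $\chi^{\sigma}$ is again primitive of conductor $\ell^{n}$, so (a) holds, giving both non-vanishing assertions of the lemma, while $\chi_K^{\sigma}$ and $\chi_K^{\sigma}\eta_K$ are ramified only above $\ell$ (and the conductor of $\eta$), where (b) applies. The main obstacle is precisely the \emph{uniform} non-vanishing at the central point $k/2$ over an entire Galois orbit of twists: an elementary averaging argument over characters of fixed conductor yields only a single non-vanishing twist, so Rohrlich's theorem is genuinely needed, and some care is required both in upgrading it to ``all sufficiently large $\ell$-power conductors'' and in applying it simultaneously to the finitely many auxiliary forms $f\otimes\psi$.
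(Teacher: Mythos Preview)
Your approach and the paper's share the same engine---Rohrlich's non-vanishing theorem together with the factorisation $L(f/K,\chi_K^{\sigma},s)=\prod_{\psi}L(f,\chi^{\sigma}\psi,s)$---but they diverge in \emph{where} $\chi$ is ramified. You place all the ramification of $\chi$ at a single auxiliary prime $\ell$ away from $N$, $\mathrm{disc}(K)$ and the conductor of $\eta$; the paper instead takes $\chi$ to be a character of $\Gal(\Q(\mu_m)/\Q)$ for large $m$, so that at \emph{every} place $v$ of $K$ above a prime dividing $N$, $\mathrm{cond}(\eta)$ or $\mathrm{disc}(K)$ one has
\[
\ord_v(\mathcal N(\chi_K))>\max\bigl\{\ord_v(\Delta_{M/K}),\ \ord_v(\mathcal N(\eta_K)),\ \ord_v(\mathcal N_K)\bigr\}.
\]
This difference is exactly what creates the gap in your argument for condition~(2) applied to $\chi_K^{\sigma}\eta_K$. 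With your $\chi$, the character $\chi_K^{\sigma}$ is \emph{unramified} at every place $v$ lying over $\mathrm{cond}(\eta)$, so the ramification of $\chi_K^{\sigma}\eta_K$ there comes entirely from $\eta_K$, and the vanishing of $(M_p(f)\otimes\chi_K^{\sigma}\eta_K)^{I_v}$ reduces to the vanishing of $(M_p(f)\otimes\eta_K)^{I_v}$. You dispose of this by assuming $\mathrm{cond}(\eta)$ prime to $N$, but the lemma is stated for \emph{arbitrary} $\eta$. If some $q\mid\gcd(\mathrm{cond}(\eta),N)$ with $q^2\mid N$, the local representation $M_p(f)|_{I_q}$ may well contain $\eta^{-1}|_{I_q}$ as a constituent, in which case the Euler factor at $v\mid q$ is not $1$ and your argument fails.

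The paper's choice of $\chi$ sidesteps this entirely: because $\ord_v(\mathcal N(\chi_K^{\sigma}))>\ord_v(\mathcal N(\eta_K))$, one has $\ord_v(\mathcal N(\chi_K^{\sigma}\eta_K))=\ord_v(\mathcal N(\chi_K^{\sigma}))>\ord_v(\mathcal N_K)$, and a character whose conductor exponent strictly exceeds that of the two-dimensional $M_p(f)$ at $v$ cannot occur as an $I_v$-constituent of $M_p(f)$, so the inertial invariants vanish automatically, regardless of any relation between $\eta$ and $N$. If you want to keep the single-auxiliary-prime picture, you would need either to add the hypothesis $(\mathrm{cond}(\eta),N)=1$ to the lemma (and then check that the application in Proposition~\ref{2.4} permits such a choice of $\eta$), or to additionally ramify $\chi$ at the primes dividing $\gcd(\mathrm{cond}(\eta),N)$---at which point you are back to the paper's construction.
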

\proof
Let  $\Sigma$ be any finite set of primes of $\Q$ containing the primes dividing $N$, the primes dividing the conductor of $\eta$, and the primes which ramify in $K$. By an important theorem of Rohrlich \cite{R}, there exists a finite abelian extension $M$ of $\Q$, unramified outside $\Sigma$, such that $L(f,\lambda,k/2)\neq 0$ for every abelian character  $\lambda$ of $\Q$ that is unramified outside $\Sigma$, and which does not factor through $\Gal(M/\Q).$ By enlarging $M$ if necessary, we can assume that $M \supset K$. Let ${\mathcal N}_K$ be the conductor of $f/K$, and $\Delta_{M/K}$ the relative discriminant of $M$ over $K$. Also, if $\xi$ is an abelian character of $K$, write ${\mathcal N}(\xi)$ for its conductor. Let $\chi$ be any abelian character of $\Q$ such that, for every prime $v$ of $K$ above $\Sigma$, we have
\begin{equation}\label{23}
\ord_v({\mathcal N}(\chi_K)) > {\rm max}\big\{\ord_v(\Delta_{M/K}),\ord_v({\mathcal N}(\eta)),\ord_v({\mathcal N}_K)\big\}.
\end{equation}
Such a character can always be found by taking a character of $\Gal(\Q(\mu_m)/\Q)$ for sufficiently large $m$. Let $v$ be any place of $K$ above $\Sigma,$ and put
$$
t_v={\rm max}\big\{\ord_v(\Delta_{M/K}),\ord_v({\mathcal N}_K)\big\}.
$$
Thanks to \eqref{23}, it is clear, that for each $\sigma$ in  $\Gal(\bar{\Q}/\Q)$, we have
\begin{equation}\label{24}
\ord_v({\mathcal N}(\chi_K^{\sigma}))=\ord_v({\mathcal N}(\chi_K^{\sigma}\eta))>t_v.
\end{equation}
In particular, none of these characters can factor through $\Gal(M/K).$ Moreover, it is also easily seen from \eqref{24} that
$$
\left( M_p(f) \otimes_{\bar{\Q}_p}\chi_K^{\sigma}\right)^{I_v} =\left(M_p(f) \otimes_{{\bar{\Q}}_p}\chi_K^{\sigma}\eta\right)^{I_v}=0,
$$
whence the final assertion of the lemma is clear.
\qed

\medskip

We now prove that the left hand side of \eqref{17'} is an algebraic number. Take $J=K(i),$ and let $\eta$ be any abelian character of $\Q$ such that $J$ is the fixed field of the kernel of $\eta_K.$ Now let $\chi$ be an abelian character of $\Q$  having the properties specified in Lemma \ref{2.5}, and write $\chi_J$ for the restriction of $\chi$ to $\Gal(\bar{\Q}/J).$ Note that the representation of $\Gal(\bar{\Q}/K)$ induced by $\chi_J$ is $\theta=\chi_K \oplus \chi_K\eta_K.$ Write $\g$ for the Hilbert modular form relative to $K$ which corresponds to $\theta.$ Thus $\g$ has parallel weight one, and character $\eta_K\chi_K^2.$ Moreover, by the second assertion of Lemma 2.5, we have the exact equality of $L$-functions
\begin{equation}\label{25}
{\mathcal D}(\f_K,\g,s) =L(f/K,\theta,s).
\end{equation}
On the other hand, since $K$ is abelian over  $\Q$, we also have
the identity
\begin{equation}\label{26}
L(f/K,\theta,s)=\overset{\beta}{\underset{j=1}{\prod}}\,L(f,\chi\zeta_j,s)L(f,\chi\eta\zeta_j,s),
\end{equation}
where $\zeta_1,\dots,\zeta_d$ denote the characters of $\Gal(K/\Q).$

\medskip

The desired algebraicity assertion follows by evaluating both sides of \eqref{26} at $s=k/2$, noting that this common value is non-zero by Lemma \ref{2.5}, and then applying Shimura's algebraicity results to each $L$-function. Indeed, Theorem 4.2 of \cite{S3}
shows that
\begin{equation}\label{27}
\frac
{{\mathcal D}(\f_K,\g,k/2)}
{(2\pi i )^{\beta}\pi^{\beta k} \langle \f_K, \f_K \rangle \tau_K(\eta_K\chi_K^2) }
\in \bar{\Q},
\end{equation}
where $\tau_K(\eta_K\chi_K^2)$ denotes the Gauss sum for the character $\eta_K\chi_K^2$ of $\Gal(\bar{\Q}/K)$ (see (3.9) of \cite{S3} for the definition of this Gauss sum). On the other hand, recalling that $\chi\zeta_j(-1) \neq \chi\zeta_j\eta(-1)$ for $j=1,\dots,\beta,$ it follows from
\cite[Theorem 1]{S2} that
\begin{equation}\label{28}
\frac
{\overset{\beta}{\underset{j=1}{\prod}}\,L(f,\chi\zeta_j,k/2)\times L(f,\chi\eta\zeta_j,k/2)}
{(2\pi i)^{\beta k}\times(\Omega_+(f)\Omega_{-}(f))^{\beta}\times
\overset{\beta}{\underset{j=1}{\prod}}\,\tau_{\Q}(\chi\zeta_j)\tau_{\Q}(\chi\eta\zeta_j)
}
\in\bar{\Q};
\end{equation}
here $\tau_{\Q}(\kappa)$ denotes the usual Gauss sum of an abelian character $\kappa$ of $\Q$.
Combining \eqref{27}and \eqref{28}, it follows immediately that the left hand side of \eqref{17'} is an algebraic number. Moreover, a more detailed analysis, exactly as in the proof of
\cite[Theorem 3.4]{BD1} shows that this algebraic number is invariant under the action of $\Gal(\bar{\Q}/\Q)$, completing the proof of Proposition \ref{2.4}.
\qed

\medskip

As a first application of  Proposition \ref{2.4}, we establish the following case of Conjecture \ref{alg}.

\begin{thm}\label{2.7}
Assume $F$ is an imaginary number field with $\Gal(F/\Q)$ abelian. Let $\psi$ be any abelian character of $\Gal(\bar{\Q}/F)$, and let $\phi$ be the induced character of $\Gal(\bar{\Q}/\Q)$.
Assume that, for every prime $q$ such that $q^2$ divides $N$, $q$ does not divide the conductor of $\phi.$
Then Conjecture \ref{alg} is valid for $f$ and $\phi.$
\end{thm}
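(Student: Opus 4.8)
The plan is to carry out, for the field $F$, the strategy of \cite[\S3]{BD1}: to realise $L(f,\phi,s)$ as a Rankin product of Hilbert modular forms over a totally real field, apply Shimura's algebraicity theorem at the critical points, and then use Proposition \ref{2.4} to trade the Petersson norm of the base change $\f_K$ for the periods $\Omega_\pm(f)$. First I would record the structure of $F$: since $F$ is imaginary and abelian over $\Q$, complex conjugation is a well-defined element $c\in\Gal(F/\Q)$ with $c\neq 1$, its fixed field $K=F^{\langle c\rangle}$ is a totally real abelian field with $[F:K]=2$, and $F$ is a CM field. Put $\beta=[K:\Q]$, so $[F:\Q]=2\beta$ and $d(\phi)=2\beta$. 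Set $\theta={\rm Ind}\,\psi$ from $\Gal(\bar\Q/F)$ to $\Gal(\bar\Q/K)$, a two-dimensional Artin representation; by transitivity of induction the representation of $\Gal(\bar\Q/\Q)$ induced from $\theta$ is exactly $\phi$. At each real place $v$ of $K$ the local decomposition group is generated by a complex conjugation which, as $F$ is totally imaginary, does not fix the place of $F$ above $v$; hence $\theta$ is totally odd, and by the classical theory of dihedral (theta) forms it is the Artin representation attached to a Hilbert modular form $\g$ over $K$ of parallel weight $1$, character $\det\theta$ and level the conductor of $\theta$ (a cusp form if $\psi^{c}\neq\psi$, an Eisenstein series otherwise). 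The hypothesis of Theorem \ref{2.7}, that $q\nmid{\rm cond}(\phi)$ whenever $q^{2}\mid N$, is then precisely the hypothesis of Lemma \ref{pqf} for this $\theta$, so Lemma \ref{pqf} gives ${\mathcal D}(\f_K,\g,s)=L(\f_K,\theta,s)$, and by \eqref{lind} the right-hand side is $L(f,\phi,s)$.

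Next, for each integer $n$ with $1\leq n\leq k-1$ (these are exactly the critical points of ${\mathcal D}(\f_K,\g,s)$), I would apply Shimura's theorem on critical values of the Rankin product ${\mathcal D}(\f_K,\g,s)$, generalising \cite[Theorem 4.2]{S3} (used in Proposition \ref{2.4} at $s=k/2$) to all critical points: it shows that ${\mathcal D}(\f_K,\g,n)$ divided by $(2\pi i)^{a(n)}\pi^{\beta k}\langle\f_K,\f_K\rangle_K$ and by a Gauss sum of $\det\theta$ lies in $\bar\Q$, where $a(n)$ is an explicit integer with $a(k/2)=\beta$ in accordance with \eqref{27} and $a(n+1)-a(n)=2\beta$ (each of the $\beta$ archimedean places of $K$ contributing a quotient of two $\Gamma_{\mathbb C}$-factors shifting by $(2\pi)^{\pm2}$), so $a(n)=\beta(2n-k+1)$. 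Substituting Proposition \ref{2.4}, which gives $\langle\f_K,\f_K\rangle_K\in\bar\Q\cdot(2\pi i)^{(k-1)\beta}\pi^{-k\beta}(\Omega_+(f)\Omega_-(f))^{\beta}$, one obtains $L(f,\phi,n)={\mathcal D}(\f_K,\g,n)\in\bar\Q\cdot(2\pi i)^{2\beta n}(\Omega_+(f)\Omega_-(f))^{\beta}$. Finally, since no conjugate of $c$ lies in $\Gal(\bar\Q/F)$ (again because $F$ is totally imaginary), the restriction of $\phi$ to a decomposition group at infinity is a sum of $\beta$ copies of the regular representation of $\Z/2$, so $d^{+}_n(\phi)=d^{-}_n(\phi)=\beta$ for every $n$; thus the membership just obtained is exactly the assertion of Conjecture \ref{alg} for $f$ and $\phi$.

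The main obstacle is the exponent bookkeeping of the second paragraph: one must extract from Shimura's Rankin-product formulas the exact power of $2\pi i$ (and of $\pi$) at a general critical point $n$, not merely at the centre $s=k/2$ used in Proposition \ref{2.4}, and confirm that after the substitution of Proposition \ref{2.4} the transcendental factor collapses to precisely $(2\pi i)^{nd(\phi)}\Omega_+(f)^{d^{+}_n(\phi)}\Omega_-(f)^{d^{-}_n(\phi)}$. This is harmless for a statement of membership in $\bar\Q$: every parity-dependent quantity — the factor of $i$ coming from $\Omega_-(f)$ being purely imaginary, the Gauss sums, and the possible splitting of Shimura's formula into two regimes according to the parity of $n$ — is algebraic, and Shimura's results cover the full range $1\leq n\leq k-1$ of critical values for the Rankin product of a parallel-weight-$k$ form with a parallel-weight-$1$ form, which is the range needed here. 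In contrast with Proposition \ref{2.4}, no nonvanishing of $L$-values is required (zero lies in $\bar\Q$), and one need not establish $\Gal(\bar\Q/\Q)$-invariance of the resulting algebraic number, since Conjecture \ref{alg} asks only for membership in $\bar\Q$.
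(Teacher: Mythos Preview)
Your proposal is correct and follows essentially the same route as the paper: take $K$ the maximal real subfield of $F$, induce $\psi$ to the two-dimensional representation $\theta$ over $K$ with associated weight-one Hilbert form $\g$, invoke Lemma \ref{pqf} to identify ${\mathcal D}(\f_K,\g,s)$ with $L(f,\phi,s)$, apply Shimura's algebraicity result \cite[(4.10)]{S3} at each critical $n$, and then substitute Proposition \ref{2.4} together with $d_n^+(\phi)=d_n^-(\phi)=\beta$. Your exponent $a(n)=\beta(2n-k+1)$ agrees exactly with the paper's citation of \eqref{pb}, and your extra remarks (that $F$ is CM, that $\theta$ is totally odd, that the Gauss sum is algebraic and hence harmless) are correct elaborations of points the paper leaves implicit.
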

\proof
Let $K$ be the maximal real subfield of $F$, and let $\theta$  be the representation of $\Gal(\bar{\Q}/K)$ induced from $\psi.$ Thus $\theta$ is a two dimensional Artin representation of $\Gal({\bar {\Q}}/K)$, and we let ${\frak g}$ be the associated Hilbert modular form as above. Then, by Lemma \ref{mpf},
\begin{equation}\label{dl}
{\mathcal D}(\f_K,\g,s)=L(\f_K, \theta,s)=L(f,\phi,s).
\end{equation}
But, assuming $n$ is an integer with $1\leq n \leq k-1,$ it is shown in \cite[(4.10)]{S3} that
\begin{equation}\label{pb}
\frac{(2\pi i)^{-2n\beta}{\mathcal D}(\f_K,\g,n)}
{(2\pi i)^{\beta(1-k)}\pi^{\beta k}\langle \f_K,\f_k\rangle_K}
\in \bar{\Q}.
\end{equation}
Now making use of  Lemma \ref{2.5}, and noting that
$$
d(\phi)=2\beta,~~d_n^+(\phi)=d_n^-(\phi)=\beta,
$$
the algebraicity statement \eqref{15} follows on putting $s=n$ in \eqref{dl}. This completes the
proof of Theorem \ref{2.7}.
\qed

\medskip

Again following the ideas of  \cite{BD1}, we now prove a refined version of  Conjecture \ref{alg}
for Artin representations $\phi$ which factor through the Galois group over $\Q$ of the field
\begin{equation}\label{16}
F_r = \Q(\mu_{p^r},m^{1/p^r});
\end{equation}
here $p$ is an odd prime number, $r \geq 1$ is an integer, $\mu_{p^r}$
is the group of $p^{r}$-th roots of unity, and $m$ is an integer $>1.$
For simplicity, we shall always assume that $m$ is not divisible by the $p$-th power
of an integer $>1.$
In order to state the refinement
of  \eqref{15}, we first recall the epsilon-factors of the Artin representation $\phi$ (for a fuller discussion, see \cite[\S 6.2]{DD}).  Fix the Haar measure $\mu$ on $\Q_p$ determined by
$\mu(\Z_p) = 1$, and the additive character $\alpha$ of $\Q_p$ given by
$$
\alpha (zp^{-t}) = e^{2i{\pi}z/p^t}, \,  {\rm for} \,  z \in \Z_p.
$$
Write $\epsilon_p(\phi)$ for the local epsilon-factor of $\phi$ at the prime $p$, which is uniquely determined by this choice of $\mu$ and $\alpha$.
For each integer $n=1,\dots,k-1,$ define
\begin{equation}\label{17}
L_p^*(f,\phi,n)= \frac{ L(f,\phi,n) \epsilon_p(\phi)}{\left((2\pi i)^{nd(\phi)}\times \Omega_+(f)^{d^+_n(\phi)}\times|\Omega_{-}(f)|^{d^{-}_n(\phi)}\right)}
\end{equation}

\medskip

\noindent{\bf Hypothesis H2:}  For all primes $q$ such that $q^2$ divides $N$, we have $(q,mp)=1.$

\begin{thm}\label{2.3}
Assume that the Artin representation  $\phi$ factors through $\Gal(F_r/\Q)$ for some integer $r \geq 1$, where $F_r$ is given by \eqref{16}. Suppose in addition that Hypotheses H1 and H2 are valid. Then $\Lambda(f,\phi,s)$  is entire, and satisfies the functional equation \eqref{12}. Moreover, for all integers $n=1,\dots,k-1,$ $L_p^*(f,\phi,n)$ is an algebraic number satisfying
\begin{equation}\label{35}
L_p^*(f,\phi,n)^{\sigma}=L_p^*(f,\phi^{\sigma},n)
\end{equation}
for all $\sigma$ in $\Gal(\bar\Q/\Q).$
\end{thm}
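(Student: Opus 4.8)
The plan is to reduce at once to the case where $\phi$ is irreducible, and then to describe the irreducible Artin representations of $G_r=\Gal(F_r/\Q)$ explicitly. Since $L(f,\phi_1\oplus\phi_2,s)=L(f,\phi_1,s)L(f,\phi_2,s)$, since $d(\phi)$, $d^+_n(\phi)$ and $d^-_n(\phi)$ are additive and $\epsilon_p(\phi)$ is multiplicative in $\phi$, and since $\sigma\in\Gal(\bar\Q/\Q)$ permutes the irreducible constituents of $\phi$ exactly as it permutes those of $\phi^{\sigma}$, all three assertions --- entirety and the functional equation \eqref{12} for $\Lambda(f,\phi,s)$, algebraicity of $L_p^*(f,\phi,n)$, and the reciprocity \eqref{35} --- are compatible with direct sums. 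Hence it is enough to treat each irreducible constituent, and each of these again factors through $G_r$.

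Next I would analyse $G_r$. Writing $G_r=A\rtimes B$ with $A=\Gal(F_r/\Q(\mu_{p^r}))$ cyclic and $B=\Gal(\Q(\mu_{p^r})/\Q)=(\Z/p^{r})^{\times}$ acting on $A$ through the cyclotomic character, Clifford theory gives two possibilities for an irreducible $\phi$: either (a) $\phi$ is one-dimensional, hence --- since $A\subseteq[G_r,G_r]$, as $p$ is odd --- a Dirichlet character of $p$-power conductor; or (b) $\phi=\mathrm{Ind}_{\Gal(\bar\Q/E)}^{\Gal(\bar\Q/\Q)}\psi$ for an abelian character $\psi$ of $\Gal(\bar\Q/E)$, where $E$ is the fixed field of $A\rtimes B_\lambda$ for a nontrivial character $\lambda$ of $A$, say of order $p^j$. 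Since the stabiliser $B_\lambda$ is the kernel of $B\to(\Z/p^{j})^{\times}$, one finds $E=\Q(\mu_{p^j})$ for some $1\le j\le r$, an imaginary abelian field, while $\psi$ has conductor divisible only by $p$ and by primes dividing $m$. In case (a) the entirety and functional equation of $\Lambda(f,\phi,s)$ are the classical Atkin--Lehner theory of the twist of $f$ by a Dirichlet character; in case (b) they follow from Theorem \ref{sha} applied to $E$ and $\psi$.

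It remains to prove algebraicity of $L_p^*(f,\phi,n)$ and the reciprocity \eqref{35}. Case (b) is exactly the situation of Theorem \ref{2.7} with the imaginary abelian field $\Q(\mu_{p^j})$ in place of $F$; the hypothesis of that theorem concerning primes $q$ with $q^{2}\mid N$ holds because the conductor of $\phi$ is supported on $p$ and the primes dividing $m$, so Hypothesis H2 applies, and Theorem \ref{2.7} already yields the coarser algebraicity statement \eqref{15}. To upgrade it to the assertion about $L_p^*(f,\phi,n)$ I would re-examine the proof of Theorem \ref{2.7}: it passes to the maximal totally real subfield $K=E\cap\R$, forms the two-dimensional totally odd representation $\theta=\mathrm{Ind}_{E}^{K}\psi$ of $\Gal(\bar\Q/K)$ with associated weight-one Hilbert modular form $\g$, and uses Lemma \ref{pqf}, Shimura's Rankin product formula \eqref{pb} and Proposition \ref{2.4} to place $L(f,\phi,n)=\mathcal{D}(\f_K,\g,n)$ in $(2\pi i)^{nd(\phi)}\Omega_+(f)^{d^+_n(\phi)}\Omega_-(f)^{d^-_n(\phi)}\bar\Q$. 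One then has to insert the local factor $\epsilon_p(\phi)$ and to replace $\Omega_-(f)$ by $|\Omega_-(f)|$, keeping exact track of the intervening Gauss sums and powers of $i$, and to apply $\sigma$ to Shimura's algebraicity statements in the explicit reciprocal form already invoked --- following \cite[Theorem 3.4]{BD1} --- in the proof of Proposition \ref{2.4}; this is what delivers \eqref{35}. In case (a), the reciprocity \eqref{35} is the classical behaviour under $\Gal(\bar\Q/\Q)$ of modular symbols twisted by Dirichlet characters, once one identifies $\epsilon_p(\phi)$ with the Gauss sum occurring there.

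The step I expect to be the main obstacle is precisely this last bookkeeping: matching the normalisations of the periods and of the Gauss sums in \cite{S2} and \cite{S3} with the definition \eqref{17}, and checking that the powers of $2\pi i$, the sign ambiguity hidden in $|\Omega_-(f)|$, and the local $\epsilon$-factors $\epsilon_p(\phi)$ combine so that \eqref{35} holds on the nose, and not merely up to a nonzero algebraic scalar. By comparison the representation-theoretic reduction and the analytic continuation are routine.
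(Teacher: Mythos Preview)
Your proposal is correct and follows essentially the same route as the paper: reduce to irreducible $\phi$, classify the irreducibles of $\Gal(F_r/\Q)$ as either one-dimensional or induced from an abelian character of $\Q(\mu_{p^j})$, invoke Theorem~\ref{sha} for the functional equation and Theorem~\ref{2.7} for algebraicity, and then in the induced case pass to the real subfield $K_r$ and apply Shimura's reciprocity for the Rankin value $\mathcal{D}(\f_{K_r},\g,n)$ together with Proposition~\ref{2.4}.

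The one place where the paper is sharper than your sketch is precisely the step you flag as the obstacle. Rather than leaving the $\epsilon$-factor bookkeeping open-ended, the paper (citing \cite[Proposition~4.5]{BD1}, not Theorem~3.4) reduces \eqref{35} to the single Galois-equivariance statement
\[
\Bigl(\prod_{q\neq p,\infty}\epsilon_q(\phi)\Bigr)^{\sigma}=\prod_{q\neq p,\infty}\epsilon_q(\phi^{\sigma}),
\]
and then dispatches this using two facts: the representation $\rho$ (the $(p-1)$-dimensional piece) can be realised over $\Q$, and for $q\neq p$ the twist $\lambda$ is unramified at $q$, so $\epsilon_q(\phi)=\epsilon_q(\lambda)^{\dim\rho}\cdot\sgn\bigl(\det(\rho)(q^{e_q(\rho)})\bigr)$, which is visibly rational in the coefficients of $\lambda$. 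This is the concrete mechanism that makes the ``bookkeeping'' go through, and it would strengthen your write-up to state it explicitly.
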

\proof
As remarked above, the proof we now give follows closely that given in \cite{BD1}, where $f$ was assumed to have weight $k=2$, and therefore corresponded to an isogeny class of elliptic curves defined over $\Q.$ For each integer $r\geq 1$,  define ${\cK}_r=\Q(\mu_{p^r})$  and write $K_r$ for its maximal real subfield. Note that $\Gal(F_r/{\cK}_r)$ is cyclic of order $p^r,$  since $m$ is assumed to be $p$-power free. Put
\begin{equation}\label{36}
F_{\infty}=\bigcup_{r\geq 1} F_r,~~~~~~~~~{\cK}_{\infty}=\bigcup_{r \geq 1} {\cK}_r,~~~~~~~~~G=\Gal(\Fin/\Q).
\end{equation}
For this proof, define $\rho$ to be the representation of $\Gal(F_r/\Q)$ induced by any character of exact order $p^r$
of $\Gal(F_r/{\cK}_r).$ It is then easy to see that $\rho$ is irreducible, and that every irreducible Artin representation $\phi$ of $G$ is of the form $\lambda$ or $\rho\lambda$ for some integer $r\geq 1$,where $\lambda$ is a one dimensional character of $\Gal({\cK}_{\infty}/\Q)$. For the proof of  Theorem \ref{2.3}, we may assume that $\phi$ is irreducible. Now it is clear from these remarks that every irreducible Artin representation $\phi$  of $G$ is induced from an abelian character of  ${\cK}_r$ for some integer $r \geq 1$. Thus Theorem \ref{sha}  implies that
$\La(f,\phi,s)$ is entire and satisfies the functional equation \eqref{12}. Also, noting that $\Fin/\Q$ is unramified outside of the primes dividing $mp,$ we conclude from Theorem \ref{2.7} that $L_p^*(f,\phi,n)$ is an algebraic number for all Artin characters $\phi$ of $G$ and all integers $n=1,\dots, k-1.$ Thus it remains to establish \eqref{35} for irreducible $\phi$.

\medskip

If $d(\phi)=1$, one can easily deduce \eqref{35} from \cite[Theorem 1]{S1}. Assuming $d(\phi)>1,$ it follows that  for some integer $r\geq 1$, $\phi$ is induced by an abelian character of ${\cK}_r$ of the form $\psi \lambda_{{\cK}_r},$ where $\psi$ is a character of $\Gal(F_r/{\cK}_r)$ of exact order $p^r$, and
$\lambda_{{\cK}_r}$  is the restriction to $\Gal(\bar{\Q}/{\cK}_r)$ of a one dimensional character $\lambda$ of $\Gal({\cK}_{\infty}/\Q).$  We define $\theta$ to be the two dimensional Artin representation of $\Gal(\bar{\Q}/K_r)$ induced by $\psi\lambda_{{\cK}_r},$ and take $\g$ to be the corresponding Hilbert modular form relative to $K_r$ of parallel weight one.  Let $\nu$ be the abelian character of $K_r$ defining the quadratic extension ${\cK}_r/K_r,$  and let $\lambda_{K_r}$  be the restriction of $\lambda$ to $\Gal(\bar{\Q}/K_r).$  Since the determinant of $\theta$ is equal to $\nu \lambda_{K_r}^2$,  $\g$ will have character $\nu \lambda_{K_r}^2.$  Moreover, noting that Hypothesis H2 is valid for $\f$ and $\phi$ because the conductor of $\phi$ can only be divisible by primes dividing $mp,$ we conclude from Lemma \ref{pqf} that
\begin{equation}\label{37}
{\mathcal D}(\f_{K_r},\g,s)=L(f_{K_r},\theta,s)=L(f,\phi,s).
\end{equation}
Taking $s=n$ with $1 \leq n \leq k-1,$ it then follows from \cite[Theorem 4.2]{S3} that
\begin{equation}\label{38}
{\mathcal A}(f,\phi,n):=
\frac{L(f,\phi,n)}
{(2\pi i)^{d(\phi)(1+2n-k)/2}\times \pi^{kd(\phi)/2}\times \langle \f_K,\f_K \rangle_{K_r}
\times \tau_{K_r}(\nu \lambda_{K_r}^2)}
\end{equation}
satisfies
$$
{\mathcal A}(f,\phi,n)^{\sigma}={\mathcal A}(f,\phi^{\sigma},n)
$$
for all $\sigma$ in $\Gal(\bar{\Q}/\Q);$ here $\tau_{K_r}(\nu\lambda_{K_r}^2)$ is the Gauss sum of the abelian character $\nu\lambda_{K_r}^2$  of $K_r$, as defined by \cite[(3.9)]{S3}. Noting that
$$
d(\phi)=2[K_r:\Q],~d^+(\phi)=d^-(\phi)=[K_r:\Q],
$$
we conclude easily from \eqref{17'} and \cite[Proposition 4.5]{BD1} that the last assertion of Theorem \ref{2.3}
will hold if and only if
\begin{equation}\label{38'}
\left( \underset{q\neq p,\infty}{\prod}\, \epsilon_q(\phi)\right)^{\sigma}=\underset{q\neq p,\infty}{\prod}\,\epsilon_q(\phi^{\sigma}),
\end{equation}
for all $\sigma$ in $\Gal(\bar{\Q}/{\Q}).$ But \eqref{38'} is an immediate consequence of the fact that $\rho$ can be realized over $\Q$, and the equation
$$
\epsilon_q(\phi)=\epsilon_q(\lambda)^{\dim(\rho)}\,\sgn\left( \det(\rho)(q^{e_q(\rho)})\right),
$$
which holds for all $q \neq p,$ since $\lambda$ is unramified at $q$; here $e_q(\rho)=\ord_q({\mathcal N}(\rho))$, with ${\mathcal N}({\rho})$ denoting the conductor of $\rho.$ This completes the proof.
\qed

\section{Interlude on root numbers}

%$$
%  F_r = \Q(\mu_{p^r},m^{1/p^r});
%$$
Recall that $F=\Q(\mu_p,m^{1/p}).$ From now on, write
$\rho$ for the unique irreducible representation of $\Gal(F/\Q)$ of dimension $p-1$;
it is induced from any non-trivial character of $\Gal(F/\mathcal K).$
We now describe the local root numbers
$$
  w_q(f,\rho) = \frac{\epsilon_q(f,\rho)}{|\epsilon_q(f,\rho)|}
$$
and the corresponding global root number $w(f,\rho)$
under the hypothesis H2 above.
This global root number is the sign in the functional equation
of the twisted $L$-function $L(f,\rho,s)$.
A similar computation in weight 2, i.e., for elliptic curves, was carried out by
V. Dokchitser \cite{VD}.

\begin{thm}
\label{rootrho}
Let $f=\sum_n a_n e^{2\pi inz}$ be a primitive cusp form of conductor $N$ with trivial character, and
weight $k \geq 2$. Assume that for all primes $q$ such that $q^2|N$, we have $(q,mp)=1$.
Then, for every finite prime $q$, the local root number $w_q(f,\rho)$ is  given by
$$
  w_q(f,\rho) = w_q(\rho)^2\times\left\{
    \begin{array}{llll}
       (\frac qp)^{\ord_q(N)} &\text{if}&(q, pm)=1, \cr
       -\sgn\>a_p          &\text{if}&q=p,\>\,\ord_p(N)=1\text{ and }
                            %m\in(\Z/p^2\Z)^\times\text{is a $p$th power}\cr
                            m^{p-1}\equiv 1\!\!\mod p^2,\cr
       1                   &\rlap{otherwise.}&\cr
    \end{array}
  \right.
$$
Further, the global root number is given by
$$
  w(f,\rho) \>\>=\>\> (-1)^{\frac{p-1}2}\>\delta\>
      \prod_{q\nmid pm}\> \Bigl(\frac qp\Bigr)^{\ord_q(N)},
$$
where $\delta=-\sgn\>a_p$ when both $\ord_p(N)=1$ and $m^{p-1}\equiv 1\!\!\!\mod p^2$, and $1$ otherwise.
\end{thm}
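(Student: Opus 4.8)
\medskip

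The plan is to compute each local root number $w_q(f,\rho)$ from the Weil--Deligne representation attached to $M_p(f)\otimes\rho$ at the prime $q$ --- for $q\neq p$ this is the $\ell$-adic representation for any auxiliary $\ell\neq q$, and for $q=p$ the Weil--Deligne representation of the potentially semistable $M_p(f)|_{G_{\Q_p}}$ --- and then to recover $w(f,\rho)=w_\infty(f,\rho)\prod_{q<\infty}w_q(f,\rho)$. Two preliminary facts about $\rho$ enter. First, $\det\rho$ is the quadratic Dirichlet character $\left(\tfrac{\cdot}{p}\right)$ modulo $p$: writing $\rho=\mathrm{Ind}_{\cK/\Q}\chi$ one has $\det\rho=\epsilon_{\cK/\Q}\cdot(\chi\circ\mathrm{Ver})$, and the transfer $\chi\circ\mathrm{Ver}$ has order dividing $p$ yet factors through $\Gal(\cK/\Q)$, which has order $p-1$ prime to $p$, hence is trivial, so $\det\rho=\epsilon_{\cK/\Q}=\left(\tfrac{\cdot}{p}\right)$. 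Second, one needs the local shapes of $\rho|_{D_q}$: unramified for $q\nmid pm$; tamely ramified with $(\rho|_{D_q})^{I_q}=0$ for $q\mid m$, $q\neq p$; and, at $p$, $\rho|_{D_p}=\mathrm{Ind}_{\Q_p(\mu_p)/\Q_p}\chi_{\mathfrak p}$, which --- by a short computation in $\Q_p(\mu_p)^\times$, reducing to principal units and using $1+p\Z_p\subset 1+\mathfrak p^{p-1}$ --- equals the regular representation $\bigoplus_{\psi\bmod p}\psi$ of $\Gal(\Q_p(\mu_p)/\Q_p)$ when $m^{p-1}\equiv 1\bmod p^2$, and is totally ramified with no inertial invariants otherwise. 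On the side of $f$ I use: $M_p(f)$ is unramified outside $Np$; its Weil--Deligne representation at $p$ is unramified (crystalline) when $p\nmid N$; $\det M_p(f)$ is unramified away from $p$ with Frobenius eigenvalue-product a positive power of $q$; $w_q(f)\in\{\pm1\}$ for every $q$ (trivial nebentypus, so $\pi_q$ is self-dual), hence $w_q(f)^{p-1}=1$; and at any prime $q\,\|\,N$ --- including $q=p$, while any prime with $q^2\mid N$ is coprime to $mp$ by hypothesis --- one has $M_p(f)|_{D_q}=\mathrm{sp}(2)\otimes\mu_q$ with $\mu_q$ unramified quadratic and $\mu_p(\Frob_p)=\sgn a_p$.

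Granting this, the local computations are routine manipulations of $\epsilon$-factors: the formulas for $V\otimes(\text{unramified})$, $(\text{unramified})\otimes W$ and $\mathrm{sp}(2)\otimes\sigma$, plus the fact that twisting by an unramified character alters an $\epsilon$-factor only by a Frobenius eigenvalue power. At $q\nmid pm$, $\epsilon_q(f\otimes\rho)=(\det\rho)(\Frob_q)^{\ord_q N}\epsilon_q(f)^{p-1}$, so $w_q(f,\rho)=\left(\tfrac qp\right)^{\ord_q N}=w_q(\rho)^2\left(\tfrac qp\right)^{\ord_q N}$ since $w_q(\rho)=1$. At $q\mid m$, $q\neq p$: if $q\nmid N$ then $\epsilon_q(f\otimes\rho)$ is a positive real multiple of $\epsilon_q(\rho)^2$, while if $q\,\|\,N$, then since $\rho|_{D_q}$ has no inertial invariants $\epsilon_q(\mathrm{sp}(2)\otimes\mu_q\otimes\rho|_{D_q})=\epsilon_q(\mu_q\otimes\rho|_{D_q})\,\epsilon_q((\mu_q|\cdot|)\otimes\rho|_{D_q})$, whose two unramified twists change the root number only by $\mu_q(\Frob_q)^2=1$; either way $w_q(f,\rho)=w_q(\rho)^2$. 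At $q=p$: when $p\nmid N$, or when $p\,\|\,N$ with $m^{p-1}\not\equiv1\bmod p^2$, the same two arguments (applied to the unramified Weil--Deligne representation, respectively to the totally ramified $\rho|_{D_p}$) give $w_p(f,\rho)=w_p(\rho)^2$; and when $p\,\|\,N$ with $m^{p-1}\equiv1\bmod p^2$ one gets $\epsilon_p(f\otimes\rho)=\prod_{\psi}\epsilon_p(\mathrm{sp}(2)\otimes\mu_p\psi)$, the constituents $\psi\neq1$ contributing $w_p(\mu_p\psi)^2=w_p(\psi)^2$ and the constituent $\psi=1$ contributing the root number of the unramified-twisted Steinberg representation, $-\mu_p(\Frob_p)=-\sgn a_p$; hence $w_p(f,\rho)=\bigl(\prod_{\psi\neq1}w_p(\psi)^2\bigr)(-\sgn a_p)=w_p(\rho)^2(-\sgn a_p)$. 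This reproduces the asserted local formula in all cases.

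For the global root number, multiply the local ones: $\prod_{q<\infty}w_q(f,\rho)=\bigl(\prod_{q<\infty}w_q(\rho)^2\bigr)\prod_{q\nmid pm}\left(\tfrac qp\right)^{\ord_q N}\cdot\delta$, with $\delta$ as in the statement. Now $\prod_{q<\infty}w_q(\rho)^2=\bigl(w(\rho)/w_\infty(\rho)\bigr)^2$, where $w(\rho)^2=1$ because $\rho$ is self-dual ($\rho\cong\rho^\vee$, so $w(\rho)w(\rho^\vee)=1$), and $w_\infty(\rho)^2=(-1)^{(p-1)/2}$ because $\cK$ is totally imaginary, so complex conjugation acts on $\rho$ with $\tfrac{p-1}{2}$ eigenvalues each of $+1$ and $-1$ and $\epsilon_\infty(\rho)=i^{(p-1)/2}$. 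Finally $w_\infty(f,\rho)=(i^k)^{p-1}=1$ since $k$ is even and twisting a $\C$-type Hodge structure by the sign character leaves its $\Gamma_\C$-factor unchanged. Combining, $w(f,\rho)=(-1)^{(p-1)/2}\,\delta\prod_{q\nmid pm}\left(\tfrac qp\right)^{\ord_q N}$, as claimed.

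The step I expect to be hardest is the local computation at $q=p$ when $p\,\|\,N$ and $m^{p-1}\equiv1\bmod p^2$: obtaining the sign $-\sgn a_p$ exactly requires pinning down the local Langlands normalisation of the Steinberg component of $f$ (hence the precise relation $a_p=\mu_p(\Frob_p)p^{(k-2)/2}$), the built-in $-1$ in the $\epsilon$-factor of $\mathrm{sp}(2)$ together with the contribution of its one-dimensional inertial-invariant line, and the fact that the $\psi\neq1$ constituents assemble into a perfect square. A secondary delicate point is the field-theoretic fact that $m\in(\Q_p(\mu_p)^\times)^p$ --- equivalently $\chi_{\mathfrak p}$ unramified, hence trivial --- exactly when $m^{p-1}\equiv1\bmod p^2$, together with the Mackey argument giving $(\rho|_{D_p})^{I_p}=0$ precisely when $\chi_{\mathfrak p}$ is ramified (immediate, since $\Q_p(\mu_p)/\Q_p$ is totally ramified, so only one double coset occurs). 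All the remaining work is bookkeeping with local constants, in the spirit of the weight-$2$ computation of \cite{VD}.
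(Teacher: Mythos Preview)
Your argument is correct and covers all cases, but it is organised rather differently from the paper's proof and uses a different device to pin down the local behaviour of $\rho$.

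The paper partitions primes by $\ord_q(N)$ and treats the substantive case $\ord_q(N)=1$ in one stroke via the semi\-simplification formula
\[
w_q(f,\rho)=w_q(\rho)^2\,\sgn(-a_q)^{d_q}\,\sgn\det(\Frob_q\mid\rho^{I_q})^{-1},\qquad d_q=\dim\rho^{I_q},
\]
and then reads off $d_q$ and the Frobenius action on $\rho^{I_q}$ by comparing Euler factors in the global identity $\zeta_{\Q(m^{1/p})}(s)=\zeta(s)\,L(\rho,s)$. Thus the paper never needs the full local structure of $\rho|_{D_q}$, only its inertial invariants, and the condition $m^{p-1}\equiv 1\bmod p^2$ enters as the criterion for $d_p=1$ to be odd. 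Your approach instead partitions by whether $q\mid pm$, analyses $\rho|_{D_q}$ directly (Mackey restriction, the identification $\rho|_{D_p}=\mathrm{Ind}_{\Q_p(\mu_p)/\Q_p}\chi_{\mathfrak p}$, and the Kummer--Hensel computation of when $\chi_{\mathfrak p}$ is trivial), and then applies the unramified-twist and $\mathrm{sp}(2)\otimes\sigma$ formulas piece by piece. At $q=p$, $p\parallel N$, $m^{p-1}\equiv 1\bmod p^2$ you decompose $\rho|_{D_p}=\bigoplus_{\psi\bmod p}\psi$ and extract $-\sgn a_p$ from the single unramified-Steinberg constituent, whereas the paper gets the same sign from the parity of $d_p$.

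Both routes are clean; the paper's is more uniform (one computation for all Steinberg primes, with a global shortcut for $\rho^{I_q}$), while yours is more structural and makes explicit what $\rho|_{D_p}$ actually is. Your handling of the global root number --- computing $w_\infty(\rho)^2=(-1)^{(p-1)/2}$ and $w_\infty(f,\rho)=(i^k)^{p-1}=1$ separately --- is a touch more detailed than the paper, which simply cites \cite{VD} for the combined archimedean contribution.
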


\begin{proof}  Let $l$ be any prime distinct from $q$,
and as before, let $V_l$ be the $l$-adic Galois representation attached
to $f$.  Put $n(V)=\ord_q(N),$  and let $n(\rho)$ be such that $q^{n(\rho)}$
is the $q$-part of the conductor of $\rho$.
We note that the determinant $\det\,\rho$ of $\rho$ equals $(\frac\cdot p)$, the
non-trivial quadratic character of $\Gal(\Q(\mu_p)/\Q)$.
\noindent Recall that the inverse  local  Euler factors of $L(f,s)$ are
$$
  P_q(f,T) = \left\{
  \begin{array}{llll}
  1-a_qT+q^{k-1}T^2&&\text{if}& (q,N)=1,\cr
  1-a_qT&&\text{if}& \ord_q(N)=1,\cr
  1&&\text{if}& \ord_q(N)\geq 2.\cr
  \end{array}
  \right.
$$

\smallskip

\noindent The local root numbers $w_q(f, \rho)$ can be computed as follows:-

\noindent{\bf Case 1} ($(q,N)=1$):  In this case $V_l$ is unramified, and we can use
the unramified twist formula \cite[3.4.6]{TatN},
$$
  w_q(f,\rho)=w_q(\rho)^{\dim V_l}\cdot\sgn((\det \,V_l)(q^{n(\rho)})).
$$
Here $\sgn \,z=\frac{z}{|z|}$ for $z\in\C$, and we evaluate the one-dimensional
character $\det V$ on a number $q^{n(\rho)}\in\Q_q^\times$ via the local
reciprocity map. The second term is trivial since
$\det V_l$ is a power of a cyclotomic character which takes positive values
on~$\Q_q^\times$. Thus $w_q(f, \rho)=w_q(\rho)^2$, as asserted.

\smallskip

\noindent{\bf Case 2}  ($\ord_q(N)=1$):  Here $a_q\ne 0, \dim V_l^{I_q}=1$, the action of inertia
$I_q$ is unipotent $\sm1*01$, and the action of Frobenius is $\sm{a_q}*0{a_q^{-1}p^{k/2}}$,
where $k$ is the weight of $f$; the top left corner can be seen
e.g. from the local factor.
Write $(V_l\tensor \rho)^{ss}$ for the semi-simplification of $V_l\tensor \rho$.
Writing $\tau={\rm Frob}_q$,  the semi-simplification formula for $\epsilon$-factors \cite[4.2.4]{TatN} gives
$$
  \begin{array}{lllll}
  w_q(f, \rho)&=&w_q((V_l\tensor \rho)^{ss})\>
  \frac{\sgn\,\det(-\tau | ((V_l\tensor \rho)^{ss})^{I_q})}
       {\sgn\,\det(-\tau | (V_l\tensor \rho)^{I_q})}\cr
  &=& w_q(\rho\oplus \rho)
  \frac{\sgn\,\det(-a_q\tau | \rho^{I_q})\sgn\,\det(-a_q^{-1}p^{k/2}\tau | \rho^{I_q})}
       {\sgn\,\det(-a_q\tau | \rho^{I_q})}\cr
  &=& \sgn\,\det(-a_q\tau | \rho^{I_q})^{-1} \cr
  &=& w_q(\rho)^2 \sgn(-a_q)^{d_q} \sgn\,\det(\tau | \rho^{I_q})^{-1};
  \end{array}
$$
here $d_q$ denotes the dimension of $\rho^{I_q}.$
It remains to determine $\rho^{I_q}$ and the action of Frobenius on it.
Let $J=\Q(m^{1/p})$. There is an equality of $L$-functions
$$
  \zeta_J(s) = \zeta(s) L(\rho,s).
$$
By considering the ramification of $q$ in $J/\Q$ and comparing the
local factors at $q$, we find that
$$
  P_q(\rho,T) = \left\{
  \begin{array}{llll}
  1+\cdots+(\frac qp)T^{p-1}&&\text{if}&q\nmid pm,\cr
  1&&\text{if}&q|m,\cr
  1-T&&\text{if}&p=q\text{ and $m$ is a $p$th power in $\Z_p^\times$},\cr
  1&&\text{if}&p=q\text{ and $m$ is not a $p$th power in $\Z_p^\times$}.\cr
  \end{array}
  \right.
$$
In particular, $d_q$ is even (and so $\sgn\>(-a_q)^{d_q}=1$) in all but the third case,
and $\det(\tau | \rho^{I_q})=1$ in all but the first case; in the first case,
$$
  \det(\tau| \rho^{I_q}) = \det(\tau | \rho) = \Bigl(\frac qp\Bigr) =
    \Bigl(\frac qp\Bigr)^{\ord_q(N)},
$$
as asserted by the formula. Finally, if $p\nmid m$, then it is easy to see by Hensel's lemma
that $m$ is a $p$th power in $\Z_p^\times$ if and only if it is a $p$th power in
$(\Z/p^2\Z)^\times$, which is in turn equivalent to the condition $m^{p-1}\equiv 1\mod p^2$.

\smallskip

\noindent{\bf Case 3} ($\ord_q(N)\geq 2$): By assumption, $q\nmid mp$, so $\rho$ is unramified. Then $w_q(\rho)=1$, and
by the unramified twist formula
$$
\begin{array}{lllll}
  w_q(f, \rho)&=&w_q(V_l)^{\dim \rho}\cdot\sgn((\det \rho)(q^{n(V)}))
   = (\pm 1)^{p-1}(\frac qp)^{n(V)} = (\frac qp)^{n(V)},\cr
\end{array}
$$
as claimed.
%As an $I_q$-module $V\tensor \rho\iso V^{\oplus\dim \rho}$

Turning to the global root number, we have
$$
  w(f,\rho) = \prod_v w_v(f, \rho),
$$
the product being taken over all places $v$ of $\Q$.  As $\rho$ is self-dual,
$$
  \prod_v w_v(\rho)^2 = w(\rho)^2 = 1,
$$
and the remaining contribution from the real place is
$(-1)^{\frac{p-1}2}$ (see e.g. \cite{VD}). This completes the proof.
\end{proof}

\begin{eg}
We compute the global root numbers $w(f,\rho)$ when $p=3$ and $f$ is one of the
primitive cusp forms with $(N,k)=(5,4), (5,6), (7,4)$ or $(121,4)$
that we will use in \S\ref{s:numdata} to illustrate the congruences.
In these cases, the answer does not actually depend on the weight.
\begin{itemize}
\item If $f$ has level $5$, then $\delta=1$ as $(3, N)=1$, whence
$$
  w(f,\rho) = (-1)^{\frac{3-1}2}\cdot 1\cdot \left\{
    \begin{array}{llll}
    (\frac53)&\text{if}&(5,m)=1\cr
    1        &\text{if}&\ord_5(m) \geq 1
    \end{array}
  \right.
  =
  \left\{
    \begin{array}{llll}
    1 &\text{if}&(5,m)=1\cr
    -1  &\text{if}&\ord_5(m) \geq 1.
    \end{array}
  \right.
$$
\item
Similarly, if $f$ has level 7, then $(\frac53)=-1$ is replaced by $(\frac73)=+1$
and we get $w(f,\rho)=-1$ for every $m$.
(cf. also \cite[\S7.1]{VD}, first example).
\item Finally, if $f$ has level 121, then $(\frac53)$ is replaced by
$(\frac{11}3)^{\ord_{11}(121)}=+1$, and we again get $w(f,\rho)=-1$ for every $m$.
\end{itemize}
The congruence that we verify involves the twists of $f$ by $\rho$ 
and by the regular representation $\sigma$ of $\Gal(K/\Q)\iso(\Z/p\Z)^\times$. 
It easy to check that the root numbers $w_q(f,\sigma)$ and $w(f,\sigma)$ are given by the formula in Theorem \ref{rootrho} with $m=1$. 
When $p\nmid N$ the formula becomes
$$
  w(f,\sigma) = (-1)^{\frac{p-1}2} \prod_{q|N} \Bigl(\frac qp\Bigr)^{\ord_q(N)} = 
     (-1)^{\frac{p-1}2} \Bigl(\frac Np\Bigr).
$$
In particular, for $p=3$ the global root number $w(f,\sigma)$ 
is $+1$ for the form of level 5, and $-1$ for the forms of level 7 and 121.
\end{eg}

\section{An analogue of a result of Hachimori-Matsuno}

The aim of this section is to establish an analogue for our primitive cusp form $f$ of results of Hachimori-Matsuno \cite{H-M}  for elliptic curves, over the fields

\begin{equation}\label{39}
{\mathcal K}_{\infty}=\Q(\mu_{p^{\infty}}),~~~F^{cyc}=\Q(\mu_{p^{\infty}},m^{1/p}),
\end{equation}
where again $m$ is an integer $>1$ which is $p$-power free. Such a result has already been established in \cite{PW}, but we wish to give a slightly more explicit result in order to explain its connexion with the congruence \eqref{1'}.
Write $\chi_p$ for the character giving the action of $\Gal({\bar \Q}/\Q)$ on $\mu_{p^{\infty}}.$ As usual,
for each $n \in \Z,$ write $\Z_p(n)$ for the free $\Z_p$-module of rank one on which $\Gal(\bar{\Q}/\Q)$ acts via
$\chi_p^n.$ If $W$ is any $\Gal(\bar \Q/\Q)$-module, which is also a $\Z_p$-module, define $W(n)=W \otimes_{\Z_p}\Z_p(n)$, endowed with the natural diagonal action of $\Gal(\bar{\Q}/\Q).$

\medskip

Let $V_p$ be the underlying $\Q_p$-vector space of the Galois representation $\tau_p$ attached to $f$.
Fix  once and for all a $\Z_p$-lattice $T_p$ in $V_p,$ which is stable under the action of $\Gal(\bar{\Q}/\Q).$
We stress that we always view $V_p$ as  the cohomology group, not the homology group of the motive $M(f).$
We assume from now on that $p$ and $f$ satisfy:-
\medskip

\noindent{\bf Hypothesis H3:} The odd prime $p$ is good ordinary for $f$, i.e., $p$ is an odd prime such that $(p,N)=(p,a_p)=1.$

\medskip

\noindent As $p$ is a good ordinary prime, it is shown in \cite{MW} that there exists a one dimensional subspace $V_p^0$ of $V_p$ such that the inertial subgroup of $\Gal(\bar{\Q}_p/{\Q_p})$ acts on $V_p/V_p^0$ by $\chi_p^{1-k}.$  Hence if we define
\begin{equation}\label{40}
A_{p^{\infty}}=V_p(k-1)/T_p(k-1),
\end{equation}
and define $A^0_{p^{\infty}}$ to be the image of $V_p^0(k-1)$ in $A_{p^{\infty}},$ then
$A_{p^{\infty}}/A^0_{p^{\infty}}$ is unramified at $p$.
For each finite extension ${\mathcal F}$ of $\Q$, define ${\mathcal F}^{\cyc}$ to be the  cyclotomic $\Z_p$-extension of ${\mathcal F}$, i.e., the compositum of ${\mathcal F}$ with the cyclotomic $\Z_p$-extension  of $\Q.$ We follow Greenberg and define the Selmer group of $A_{p^{\infty}}$ over ${\cF}^{\cyc}$ by
\begin{equation}\label{41}
\sel(A_{p^{\infty}}/{\cF}^{\cyc})=\ker\left(H^1({\cF}^{\cyc},\ap) \to\,\underset{w\nmid p}{\prod}\, H^1({\cF}^{\cyc}_w,\ap) \times \underset{w\mid p}{\prod}\, H^1({\cF}^{\cyc}_w,\ap/A^0_{p^{\infty}})\right),
\end{equation}
where $w$ runs over all finite places of $\fc$, and $\fc_w$ denotes the union of the completions at $w$ of the finite extensions of  $\Q$ contained in $\fc.$ Write
\begin{equation}\label{42}
X(\ap/\fc)=\Hom(\sel(\ap/\fc),\Q_p/\Z_p)
\end{equation}
for the compact Pontryagin dual of $\sel(\ap/\fc).$ Assuming  $\cF$ is Galois over $\Q$, both $\sel(\ap/\fc)$ and $X(\ap/\fc)$ are endowed with canonical left actions of $\Gal(\fc/\Q),$ and these extend by continuity to left module structures over the Iwasawa algebra
$$
\La(\Gal(\fc/\Q))=\underset{\leftarrow}{{\rm lim}}\,\Z_p[\Gal(M/\Q)],
$$
where $M$ runs over the finite Galois extensions of $\Q$ contained in $\fc.$
%Assume from now on the following ordinarity hypothesis:

\medskip

\noindent   We shall need the following fundamental result of Kato (see \cite{K}). Note that for ${\cK}={\Q}(\mu_p)$, we have ${\kc}=\Q(\mu_{p^{\infty}}).$

\begin{thm}\label{3.1}
Assume Hypothesis H3. Then $X(\ap/\kc)$ is a torsion $\La(\Gal(\kc/\Q))$-module.
\end{thm}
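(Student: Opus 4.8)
The plan is to deduce the assertion from Kato's Euler system for $f$ together with the formal consequences of global duality; nothing genuinely new is involved, and the work lies in lining up the known ingredients.

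Put $T=T_p(k-1)$, so that $\ap=V_p(k-1)/T$ in the notation of \eqref{40}, and write $\La=\La(\Gal(\kc/\Q))$. Kato's construction in \cite{K} attaches to $f$ a $p$-adic zeta element in the first Iwasawa cohomology group $H^1_{\mathrm{Iw}}(\kc,T)$, and his explicit reciprocity law identifies its image under the Coleman--Perrin-Riou map, up to a unit, with Manin's $p$-adic $L$-function of $f$ (see \cite{M}); equivalently, that image interpolates the normalised critical values $L(f,\chi,j)$ as $\chi$ runs over the finite-order characters of $\Gal(\kc/\Q)$ and $j$ over $1,\dots,k-1$. The first step is to observe that this zeta element is not $\La$-torsion: Manin's $L$-function is a non-zero element of $\La$, already because $L(f\otimes\chi,k-1)\neq 0$ for every Dirichlet character $\chi$ of $p$-power conductor --- the point $s=k-1$ lies in the region of absolute convergence of the Euler product since $k\geq 4$, and in any case Rohrlich's non-vanishing theorem \cite{R} (used in \S 2) provides infinitely many non-vanishing twisted values.

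Next, feeding this non-trivial Euler system into Kato's divisibility bound would show that the second (restricted-ramification) Iwasawa cohomology group of $T$ over $\kc$ is a torsion $\La$-module, equivalently that $H^1_{\mathrm{Iw}}(\kc,T)$ has $\La$-rank one, spanned up to torsion by the zeta element. I would then invoke the global Poitou--Tate exact sequence for the Greenberg Selmer group \eqref{41}, which expresses $\sel(\ap/\kc)$ through $H^1$, through this $H^2$, and through the local cohomology groups at the bad places: the local contributions away from $p$ are $\La$-torsion, while at the primes above $p$ good ordinariness (Hypothesis H3) ensures that $\ap/\apo$ is unramified, so that the local condition appearing in \eqref{41} is exactly the one for which Kato's result is available. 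Dualising, one would conclude that $X(\ap/\kc)=\Hom(\sel(\ap/\kc),\Q_p/\Z_p)$ is $\La$-torsion, using here the standard comparison --- valid because $p$ is good ordinary --- between the Selmer group \eqref{41} built from $V_p^0$ and Kato's Bloch--Kato Selmer group, which agree up to finite groups.

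If one wished to invoke Kato's theorem only over the cyclotomic $\Z_p$-extension $\Q^{\cyc}$, one would descend along $\Delta=\Gal(\kc/\Q^{\cyc})$: since $|\Delta|=p-1$ is prime to $p$, one has $\La=\bigoplus_{\chi}\La(\Gamma)$ over the characters $\chi$ of $\Delta$, where $\Gamma=\Gal(\Q^{\cyc}/\Q)$, and the $\chi$-isotypic component of $X(\ap/\kc)$ is the Pontryagin dual of the Selmer group over $\Q^{\cyc}$ of $\ap\otimes\chi^{-1}$, i.e. of the newform $f\otimes\chi$; Kato's theorem applies to each twist, and $X(\ap/\kc)$ is $\La$-torsion if and only if each of these $\La(\Gamma)$-components is. The only step carrying real content is the non-triviality of the zeta element, which in the present situation rests merely on the elementary non-vanishing of an $L$-value at the edge of the critical strip; everything else is formal once Kato's machinery is granted.
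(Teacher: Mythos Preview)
The paper does not prove Theorem~\ref{3.1} at all: it is stated as ``the following fundamental result of Kato (see \cite{K})'' and simply cited. So there is nothing in the paper to compare your argument against line by line.

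That said, your sketch is a fair outline of the standard route through Kato's machinery, and the ingredients you list --- the zeta element in $H^1_{\mathrm{Iw}}$, its non-triviality via non-vanishing of twisted $L$-values, the rank computation for the Iwasawa cohomology, and the passage to the Selmer dual by Poitou--Tate together with the ordinary local condition at $p$ --- are the right ones. Your observation that $s=k-1$ already lies in the region of absolute convergence (since $k\geq 4$ here) is a clean way to get non-triviality without invoking Rohrlich. The character-by-character descent to $\Q^{\cyc}$ is also a legitimate alternative packaging. One caution: the comparison you allude to between Greenberg's Selmer group \eqref{41} and the Bloch--Kato Selmer group, and the claim that the local terms away from $p$ are $\La$-torsion, are true but each requires a short argument (or a reference); if you were writing this out in full you would need to supply those. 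As a summary of why the citation to \cite{K} suffices, however, your proposal is accurate and goes well beyond what the paper itself records.
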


\medskip

\noindent  Theorem \ref{3.1} implies that  the quotient
\begin{equation}\label{43}
X(\ap/\kc)/(X(\ap/\kc)(p))
\end{equation}
 is a finitely generated $\Z_p$-module, where $X(\ap/\kc)(p)$ denotes the  $p$-primary submodule. Define $\lambda(f/F^{\cyc})$ to be the $\Z_p$-rank of \eqref{43}.  We shall also need to consider the Euler factors of the complex $L$-function $L(f/{\cK},s)$ at places $v$ with $(v,Np)=1.$
 Let $q_v$ denote the characteristic of the residue field of $v$, and write $q_v^{r_v}$ for the absolute norm of $v$. Then these Euler factors are given explicitly by
 \begin{equation}\label{44'}
 P_v(f/{\cK},X)=\det(1-{\rm Frob}_v^{-1}X\mid V_p)= 1-b_vX+q_v^{r_v(k-1)}X^2,
 \end{equation}
 where ${\rm Frob}_v={\rm Frob}_{q_v}^{r_v},$ and $b_v\in \Z.$
 Since $q_v^{r_v}\equiv 1\,{\rm mod} \,p$, it is clear that for all integers $n$,
 we have
 \begin{equation}\label{44}
 P_v(f/{\cK}, q_v^{-r_vn})\equiv 2-b_v\,{\rm mod}\,p,
 \end{equation}
 when both sides are viewed as elements of $\Z_p.$
 In particular the question whether or not the left hand side lies in $p\Z_p$ is independent of $n$.
Define ${\mathcal P}_2$ to be the set of all places $w$ of ${\mathcal K}^{\cyc}$ such that,  writing
$v=w\mid {\cK}$, we have
\begin{equation}\label{45}
{\mathcal P}_2=\{ w:\,(q_v,Np)=1,~q_v\mid m,~{\rm and}
~ \ord_p(2-b_v)>0\}.
\end{equation}

\noindent  Similarly, suppose $v$ is a place of ${\cK}$, with residue characteristic
$q_v\neq p$ and $\ord_{q_v}N=1.$ Then the Euler factor $P_v(f/{\cK},X)$ is given explicitly by
\begin{equation}\label{45'}
P_v(f/{\cK}, X)=\det(1-{\rm Frob}_v^{-1}X\mid V_p^{I_v})=1-b_vX,
\end{equation}
where $b_v= a_{q_v}^{r_v},$ with $q_v^{r_v}$ again being the absolute norm of $v.$
Note again that
$$
P_v(f/{\cK},q_v^{-r_vn})\equiv 1-b_v\,{\rm mod}\, p
$$
for all integers $n$. Also, since
$a_{q_v}^2=q_v^{k-2}$ and $q_v^{r_v}\equiv 1 \,{\rm mod}\, p$, we always have $b_v^2\equiv 1\,{\rm mod}\, p.$
Define ${\mathcal P}_1$ to be the set of all places $w$ of $\kc$ such that, writing $v=w\mid{\cK}$, we have
\begin{equation}\label{46}
{\mathcal P}_1 =\left\{w:\, \ord_{q_v}N=1,~q_v\mid m~{\rm and }~ b_v\equiv1~{\rm mod}~p\right\}
\end{equation}

\medskip

To establish an analogue
for $f$ of the theorem of Hachimori-Matsuno, we shall need the following additional hypothesis.
 \medskip

\noindent{\bf Hypothesis H4:}  $X(\ap/\kc)$ is a finitely generated $\Z_p$-module.

\medskip

\noindent Recall that  ${\kc}=\Q(\mu_{p^{\infty}})$ and $\fc=\Q(\mu_{p^{\infty}}, m^{1/p}).$
\begin{thm}\label{3.2}
Assume Hypotheses H2, H3 and H4. Then $X(\ap/F^{\cyc})$ is also a finitely generated $\Z_p$-module and
\begin{equation}\label{45''}
\lambda(f/F^{\cyc})=p\lambda (f/\kc)+\sum_{w\in{\mathcal P}_2}\,
2(p-1) + \sum_{w\in{\mathcal P}_1}\,(p-1).
\end{equation}
\end{thm}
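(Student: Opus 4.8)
The plan is to adapt the argument of Hachimori--Matsuno \cite{H-M} (compare \cite{PW}), replacing the Selmer group of an elliptic curve throughout by $\sel(\ap/-)$. Put $G=\Gal(F^{\cyc}/\kc)$; it is cyclic of order $p$ since $m$ is $p$-power free, and by Theorem \ref{3.1} and Hypothesis H4 the module $X(\ap/\kc)$ is finitely generated over $\Z_p$, of $\Z_p$-rank $\lambda(f/\kc)=\mathrm{corank}_{\Z_p}\sel(\ap/\kc)$. The geometric input I would record first is that $F^{\cyc}/\kc$ is split completely at every place of $\kc$ not above $mp$, is trivial at the places above $p$ when $p\nmid m$, and is tamely totally ramified of degree $p$ at each of the finitely many places of $\kc$ above $m$ prime to $p$; the splitting outside $mp$ and the triviality at $p$ use that $\kc$ is deeply ramified at $p$ in the sense of Coates--Greenberg, so that $\mathcal{O}_{\kc_w}^{\times}$ is $p$-divisible and $m$ is locally a $p$-th power wherever it is a unit.

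Next I would prove that $X(\ap/F^{\cyc})$ is finitely generated over $\Z_p$. Taking $G$-invariants in the sequence \eqref{41} over $F^{\cyc}$ and comparing with the one over $\kc$ yields a commutative diagram with exact rows
\[
\begin{CD}
0 @>>> \sel(\ap/\kc) @>>> H^1(\kc,\ap) @>>> \bigoplus_v \mathcal{H}_v(\kc)\\
@. @VVV @V{r}VV @V{s}VV\\
0 @>>> \sel(\ap/F^{\cyc})^{G} @>>> H^1(F^{\cyc},\ap)^{G} @>>> \bigl(\bigoplus_w \mathcal{H}_w(F^{\cyc})\bigr)^{G}
\end{CD}
\]
in which $\mathcal{H}_v(\kc)=H^1(\kc_v,\ap)$ for $v\nmid p$ and $H^1(\kc_v,\ap/\apo)$ for $v\mid p$, similarly over $F^{\cyc}$. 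Inflation--restriction shows $\ker r=H^1(G,\ap(F^{\cyc}))$ and $\mathrm{coker}\,r\hookrightarrow H^2(G,\ap(F^{\cyc}))$, both finite since $\ap(F^{\cyc})$ is cofinitely generated over $\Z_p$ and $G$ is finite, while $\ker s$ is a subquotient of $\bigoplus_v H^1(G_w,\ap(F^{\cyc}_w))$ (with $\ap/\apo$ at $v\mid p$), which vanishes at the places split in $F^{\cyc}/\kc$ and is finite at the finitely many others. A snake-lemma chase then gives that $\ker\big(\sel(\ap/\kc)\to\sel(\ap/F^{\cyc})^G\big)$ is finite and that its cokernel is a subquotient of $\ker s$, hence finite; so $\sel(\ap/F^{\cyc})^G$ is cofinitely generated over $\Z_p$ with $\mathrm{corank}_{\Z_p}\sel(\ap/F^{\cyc})^G=\lambda(f/\kc)$. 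Dualising, $X(\ap/F^{\cyc})_G$ is finitely generated over $\Z_p$, and since $\Z_p[G]$ is a complete local ring finite over $\Z_p$ the topological Nakayama lemma gives that $X(\ap/F^{\cyc})$ is finitely generated over $\Z_p$, which is the first assertion.

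For the formula I would decompose $X(\ap/F^{\cyc})\otimes_{\Z_p}\Q_p=V_0\oplus V_1$ according to $\Q_p[G]=\Q_p\times K$ with $K=\Q_p(\zeta_p)$, so that $\lambda(f/F^{\cyc})=\dim_{\Q_p}V_0+(p-1)\dim_K V_1$. Here $V_0=(X(\ap/F^{\cyc})\otimes\Q_p)^G$ has $\dim_{\Q_p}V_0=\mathrm{corank}_{\Z_p}\sel(\ap/F^{\cyc})^G=\lambda(f/\kc)$, so it remains to show $\dim_K V_1=\lambda(f/\kc)+2\#\mathcal{P}_2+\#\mathcal{P}_1$. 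Fixing a faithful character $\chi\colon G\to O^{\times}$ with $O=\Z_p[\zeta_p]$ and regarding it as a character of $\Gal(\bar{\Q}/\kc)$ through $G$, the twisting isomorphisms $H^1(F^{\cyc},\ap)\otimes K\cong H^1(\kc,\ap\otimes\chi)\otimes_O K$ and their local analogues (compatible with the ordinary condition at $p$ because $\ap/\apo$ is unramified there) show that $\dim_K V_1=\mathrm{corank}_O\sel(\ap\otimes\chi/\kc)$. Since $\ap\otimes\chi$ and $\ap$ have isomorphic restrictions to $\Gal(\bar{\Q}/\kc_v)$ for every $v\nmid m$, and their $H^0$ and $H^2$ over $\kc$ all vanish ($V_p$ has no Frobenius eigenvalue equal to a root of unity, $k$ being even), a Poitou--Tate computation as in \cite{H-M} expresses $\mathrm{corank}_O\sel(\ap\otimes\chi/\kc)-\lambda(f/\kc)$ as a sum of local discrepancies at the places $v$ of $\kc$ above $m$, the one at the place above $p$ (occurring only if $p\mid m$) vanishing after a direct local check using the unramifiedness of $\ap/\apo$.

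The last step, evaluating these discrepancies, is what I expect to be the main obstacle. At a place $v$ of $\kc$ above $q\mid m$, $q\neq p$, Hypothesis H2 forces $q\nmid N$ or $\ord_q N=1$. Because $\kc_v$ has $p$-cohomological dimension $1$ and $\chi$ is ramified at $v$, the groups $(\ap\otimes\chi)^{I_v}$ and its Tate dual are finite, so the local Euler characteristic formula gives $\mathrm{corank}H^1(\kc_v,\ap\otimes\chi)=0$ and the discrepancy reduces to $\mathrm{corank}_{\Z_p}H^0(\kc_v,\ap)$. The residue field of $\kc_v$ being the maximal $\Z_p$-extension of that of $v$, this corank is positive exactly when the prime-to-$p$ part of $\Frob_v$ acts trivially on $\ap\otimes\bar{\Q}_p$, i.e.\ when its eigenvalues lie in $1+\mathfrak{p}$; as $q_v^{r_v}\equiv1\bmod p$ the Tate twist is trivial modulo $p$, so by \eqref{44'} these eigenvalues are the roots of $X^2-b_vX+q_v^{r_v(k-1)}$ with unit product $\equiv1\bmod p$, whence either both or neither lies in $1+\mathfrak{p}$; by \eqref{44} both do precisely when $2-b_v\equiv0\bmod p$, that is $v\in\mathcal{P}_2$, giving discrepancy $2$, while for $\ord_q N=1$, $\ap^{I_v}$ has corank one with $\Frob_v$ acting (modulo the now trivial twist) by $b_v$, which lies in $1+\mathfrak{p}$ exactly when $b_v\equiv1\bmod p$, i.e.\ $v\in\mathcal{P}_1$, giving discrepancy $1$; otherwise it is $0$. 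Summing over $v\mid m$ yields $\dim_K V_1=\lambda(f/\kc)+2\#\mathcal{P}_2+\#\mathcal{P}_1$, hence $\lambda(f/F^{\cyc})=\lambda(f/\kc)+(p-1)\big(\lambda(f/\kc)+2\#\mathcal{P}_2+\#\mathcal{P}_1\big)=p\lambda(f/\kc)+2(p-1)\#\mathcal{P}_2+(p-1)\#\mathcal{P}_1$, which is \eqref{45''}. The delicate points are the local cohomology calculations over the infinite-degree fields $\kc_v$, the Poitou--Tate bookkeeping transferring them to the global corank, and the vanishing of the contribution at $p$ when $p\mid m$.
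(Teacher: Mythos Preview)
Your approach is valid and genuinely different from the paper's. The paper follows Hachimori--Matsuno directly via Herbrand quotients: it establishes $\lambda(f/F^{\cyc})=p\lambda(f/\kc)+(p-1)\ord_p\,h_{\Delta}(\sel(\ap/F^{\cyc}))$, factors the Herbrand quotient as a product of local terms $h_{\Delta}\bigl(\prod_{u\mid w}H^1(F^{\cyc}_u,C_w)\bigr)$, and then computes each of these via the Hochschild--Serre identification $H^i(\Delta,H^1(F^{\cyc}_u,C_w))\cong H^i(\Delta,C_w(F^{\cyc}_u))$ (three separate lemmas for $w\mid p$, for $w\in\mathcal{P}_2$, and for $w\in\mathcal{P}_1$). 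You instead decompose $X(\ap/F^{\cyc})\otimes\Q_p$ under $\Q_p[G]$, identify the non-trivial isotypic piece with a twisted Selmer group $\sel(\ap\otimes\chi/\kc)$, and compare $\lambda$-invariants in the Greenberg--Vatsal/Pollack--Weston style; this is essentially the method of \cite{PW}. Both routes reduce to the same local fact (the corank of $C_w(\kc_w)$ is $2$, $1$, or $0$ according as $w\in\mathcal{P}_2$, $w\in\mathcal{P}_1$, or neither), and both are legitimate. The Herbrand-quotient packaging is a bit more self-contained; yours makes the link to \cite{PW} more transparent.

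There is one genuine slip in your local analysis at $p$. The claim that $F^{\cyc}/\kc$ is trivial at the place above $p$ whenever $p\nmid m$, justified by ``$\mathcal{O}_{\kc_w}^\times$ is $p$-divisible'' from deep ramification, is false: the unit group of $\Q_p(\mu_{p^\infty})$ is \emph{not} $p$-divisible (for instance, if $m\notin(\Q_p(\mu_p)^\times)^p$ then $\Q_p(\mu_p,m^{1/p})/\Q_p$ is non-abelian of degree $p(p-1)$, hence not contained in the abelian extension $\Q_p(\mu_{p^\infty})$, so $m^{1/p}\notin\Q_p(\mu_{p^\infty})$). Thus the prime above $p$ can be inert in $F^{\cyc}/\kc$ even when $p\nmid m$, and your parenthetical ``(occurring only if $p\mid m$)'' is incorrect. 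The conclusion that the contribution at $p$ vanishes is nevertheless right, but it needs the argument you only gesture at: the local module $C_w=\ap/\apo$ at $p$ is unramified with Frobenius acting by the unit root of $1-a_pX+p^{k-1}X^2$, whose complex absolute value $p^{(k-1)/2}$ is not $1$, so $C_w(F^{\cyc}_u)$ is finite and the local term vanishes---this is exactly the paper's Lemma~\ref{3.3}. (Incidentally, your justification for splitting at $w\nmid mp$ also cites ``deeply ramified at $p$'', which is the wrong reason there; the correct point is that for such $w$ the residue field of $\kc_w$ contains $\mu_{p^\infty}$ and the principal units are pro-$q_w$, whence $\mathcal{O}_{\kc_w}^\times$ is $p$-divisible for \emph{that} reason.)
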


\proof

Put $\Delta=\Gal(F/{\cK})=\Gal(F^{\cyc}/\kc).$ If $B$ is any $\Delta$-module, we recall that the Herbrand quotient $h_{\Delta}(B)$ is defined by
$$
h_{\Delta}(B)=\frac{\#\,H^2(\Delta,B)}{\#\,H^1(\Delta,B)},
$$
whenever the cohomology groups are both finite.

Entirely similar arguments to those given for elliptic curves in \cite{H-M} show that, under the hypotheses H2, H3 and H4, $X(\ap/F^{\cyc})$ is indeed a finitely generated $\Z_p$-module, and we have
\begin{equation}\label{48}
\lambda(f/F^{\cyc})=p\lambda (f/\kc)+(p-1)\ord_p(h_{\Delta}(\sel(\ap/F^{\cyc}))
\end{equation}
where $h_{\Delta}(\ap/F^{\cyc})$ is finite.

\medskip

Let $\Sigma$ denote the set of primes of $\cK^{\cyc}$ lying above the rational primes
dividing $Nmp.$ As in \cite[\S 4]{H-M}, well-known arguments from Galois cohomology show that
\begin{equation}\label{49}
h_{\Delta}(\sel(\ap/F^{\cyc}))=\underset{w\in\Sigma}{\prod}\,h_{\Delta}\left(
\underset{u\mid w}{\prod}\,H^1(F_u^{\cyc},C_w)\right)
\end{equation}
where $u$ runs over the places of $F^{\cyc}$ above $w$, and
\begin{equation}\label{50}
C_w=\ap,~{\rm or}~\ap/\apo
\end{equation}
according as $w$ does not or does lie above $p$. Moreover, since  a prime $w$ of $\kc$ either splits completely or has a unique prime above it in $F^{\cyc},$ it is clear that the right hand side of \eqref{49} simplifies to a product of the $h_{\Delta}(H^1(F_u^{\cyc},C_w))$, where $w$ now runs over the primes in $\Sigma$ which do not split completely in $F^{\cyc}.$  Assume from now on that $w$ is a prime of $\kc$ which does not split in ${F}^{\cyc}$.  In particular, this means that the residue characteristic $q_w$ of $w$ must divide $pm$. Since $F_u^{\cyc}$ and ${\cK}_w^{\cyc}$  contain  $\mu_{p^{\infty}},$ their absolute Galois groups have $p$-cohomological dimension at most 1. As $\Delta$ is cyclic of order $p$, it then follows easily from the Hochschild-Serre spectral sequence that
\begin{equation}\label{50'}
H^i(\Delta, H^1(F_u^{\cyc},C_w))\simeq H^i(\Delta, C_w(F_u^{\cyc})),
\end{equation}
where $C_w(F_u^{\cyc})=H^0(F_u^{\cyc},C_w).$

\begin{lem}\label{3.3}
Assume there is a unique prime $u$ of $F^{\cyc}$ above $p$, and put $w=u\mid {\kc}$. Then we have
$$
h_{\Delta}(H^1(F_u^{\cyc},C_w))=1.
$$
\end{lem}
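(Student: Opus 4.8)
The plan is to reduce, by means of \eqref{50'}, to a finiteness statement about an $H^0$, and then to invoke the triviality of the Herbrand quotient on finite modules. Since $u$, hence $w$, lies above $p$, \eqref{50} gives $C_w=\ap/\apo$, which by the remark just after \eqref{40} is unramified at $p$; concretely it is isomorphic to $\Q_p/\Z_p$ as an abelian group, and $\Gal(\bar{\Q}_p/\Q_p)$ --- and therefore also the absolute Galois groups of $\cK^{\cyc}_w=\Q_p(\mu_{p^{\infty}})$ and of $F_u^{\cyc}$ --- acts on it through the unramified character sending $\Frob_p$ to the $p$-adic unit root $\alpha_p$ of $X^2-a_pX+p^{k-1}$ (a standard consequence of Hypothesis H3). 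By \eqref{50'} we have $h_{\Delta}(H^1(F_u^{\cyc},C_w))=h_{\Delta}(C_w(F_u^{\cyc}))$ with $C_w(F_u^{\cyc})=H^0(F_u^{\cyc},C_w)$, so it is enough to compute the Herbrand quotient of this $H^0$. Now $w$ does not split in $F^{\cyc}/\cK^{\cyc}$ and $\Delta=\Gal(F^{\cyc}/\cK^{\cyc})$ has order $p$, so $F_u^{\cyc}/\cK^{\cyc}_w$ has degree $p$; since $\cK^{\cyc}_w/\Q_p$ is totally ramified with residue field $\mathbb F_p$, the residue field of $F_u^{\cyc}$ is $\mathbb F_{p^f}$ for some $f\in\{1,p\}$. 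As $C_w$ is unramified, $C_w(F_u^{\cyc})$ is the kernel of multiplication by $\alpha_p^{f}-1$ on $\Q_p/\Z_p$, which is a finite group provided $\alpha_p^{f}\neq 1$; and since $p$ is odd, $\Z_p^{\times}$ has no nontrivial $p$-th root of unity, so this amounts to $\alpha_p\neq 1$.

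The crux is the inequality $\alpha_p\neq 1$, and here I would use Deligne's bound. If $\alpha_p=1$, then $1$ is a root of $X^2-a_pX+p^{k-1}$, forcing $a_p=1+p^{k-1}$; but this contradicts the Ramanujan bound $|a_p|\le 2p^{(k-1)/2}$, since $1+p^{k-1}>2p^{(k-1)/2}$ for all $k\geq 2$. (Equivalently: if $\Frob_p$ acted trivially on $C_w$ then $V_p/V_p^0\cong\Q_p(1-k)$ as $\Gal(\bar{\Q}_p/\Q_p)$-modules, whence $p^{k-1}$ would be a crystalline Frobenius eigenvalue of $V_p$, i.e.\ a root of $X^2-a_pX+p^{k-1}$, giving the same contradiction.) Therefore $C_w(F_u^{\cyc})$ is finite, and the Herbrand quotient of a finite module over the cyclic group $\Delta$ of order $p$ is $1$; hence $h_{\Delta}(C_w(F_u^{\cyc}))=1$, and by \eqref{50'} the same holds for $h_{\Delta}(H^1(F_u^{\cyc},C_w))$, as desired.

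I expect the exclusion of the case $\alpha_p=1$ to be the one genuinely non-formal step. Were it not excluded, $C_w(F_u^{\cyc})$ would be all of $\Q_p/\Z_p$ with $\Delta$ acting trivially, and its Herbrand quotient would then be $1/p\neq 1$; so the Ramanujan bound enters in an essential way, while the rest of the argument is routine Galois cohomology.
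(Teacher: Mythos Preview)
Your argument is correct and matches the paper's proof in all essentials: reduce via \eqref{50'} to the Herbrand quotient of $C_w(F_u^{\cyc})$, observe that $C_w$ is unramified with $\Frob_p$ acting by the $p$-adic unit root $\alpha_p$, rule out $\alpha_p=1$ by the Ramanujan--Deligne bound (the paper phrases this as ``complex absolute value $p^{(k-1)/2}$''), and conclude finiteness. The only difference is that the paper asserts $F_u^{\cyc}/\Q_p$ is totally ramified (so the residue field is $\mathbb F_p$ and $f=1$), whereas you hedge by allowing $f\in\{1,p\}$; your reduction $\alpha_p^f=1\Rightarrow\alpha_p=1$ via the absence of $p$-torsion in $\Z_p^\times$ handles this cleanly, so the hedge is harmless though in fact unnecessary.
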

\proof
Since $C_w$ is unramified, and $F_u^{\cyc}$ is a totally ramified extension of $\Q_p$, we have
$$
C_w(F_u^{\cyc})=H^0(\Gal(\bar{{\mathbb F}}_p/{\mathbb F}_p), C_w).
$$
But ${\rm Frob}_p$ acts on $C_w$ by multiplication by the $p$-adic unit root of $1-a_pX+p^{k-1}X^2.$ However, this unit root cannot be equal to 1 as it has complex absolute value $p^{(k-1)/2}.$ Hence
$C_w(F_u^{\cyc})$ must be finite, and thus has Herbrand quotient equal to 1.
\qed

\medskip

\noindent Write $q_w$ for the residue characteristic of $w$.

\begin{lem}\label{3.4}
Assume that $w$ is a prime of $\kc$ such that $(q_w,Np)=1$, and $q_w$ divides $m$. Then there is a unique prime $u$ of $F^{\cyc}$ above $w$, and  $h_{\Delta}(H^1(F_u^{\cyc},C_w))=p^{-2}$ if
$w\in{\mathcal P}_2$, and $h_{\Delta}(H^1(F_u^{\cyc},C_w))=1$ otherwise.
\end{lem}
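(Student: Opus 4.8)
The plan is to carry out, in weight $k$, the local computation of \cite[\S4]{H-M}. There are three moves: (a) pin down how $w$ behaves in $F^{\cyc}/\kc$; (b) use \eqref{50'} to replace $h_{\Delta}\bigl(H^1(F_u^{\cyc},C_w)\bigr)$ by the Herbrand quotient of the much smaller module $C_w(F_u^{\cyc})=H^0(F_u^{\cyc},\ap)$; (c) identify that module and compute its $\Delta$-cohomology.

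Move (a) is immediate. Since $q_w\mid m$, $q_w\neq p$ and $m$ is $p$-power free, $\ord_{q_w}(m)$ is prime to $p$, so, the completion $\kc_w$ being unramified over $\Q_{q_w}$ and containing $\mu_p$, the Kummer extension $F_u^{\cyc}=\kc_w(m^{1/p})$ is totally (tamely) ramified of degree $p$ over $\kc_w$; in particular there is a unique prime $u$ of $F^{\cyc}$ above $w$, and $F_u^{\cyc}$ and $\kc_w$ share their residue field, say $k_w$. For move (b), here $C_w=\ap$ (as $w\nmid p$), and $\ap$ is unramified at $q_w$ since $(q_w,N)=1$; because $F_u^{\cyc}/\kc_w$ is totally ramified this forces $H^0(F_u^{\cyc},\ap)=H^0(\kc_w,\ap)$, a group on which $\Delta$ acts trivially (its elements lift to the absolute Galois group of $\kc_w$, which already fixes it). By \eqref{50'} we get $h_{\Delta}\bigl(H^1(F_u^{\cyc},C_w)\bigr)=\#\bigl(B/pB\bigr)\big/\#\bigl(B[p]\bigr)$, where $B:=H^0(\kc_w,\ap)=\ap^{\Frob_w}$ and $\Frob_w$ is the Frobenius of $\kc_w$; by the same reasoning as in the proof of Lemma \ref{3.3}, once one shows $B$ is divisible of $\Z_p$-corank $r$, this quotient equals $p^{-r}$.

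Move (c) is the crux, and the step I expect to need most care: one must show $r=2$ if $w\in\mathcal P_2$ and $r=0$ otherwise. Two observations suffice. First, $\Frob_w$ fixes $\mu_{p^{\infty}}\subseteq\kc$, so every power of the cyclotomic character is trivial on it; as $\det V_p(k-1)$ is a power of $\chi_p$, this gives $\det(\Frob_w\mid V_p(k-1))=1$, whence the two eigenvalues of $\Frob_w$ on $V_p(k-1)$ are reciprocal and $r\in\{0,2\}$, with $r=2$ exactly when $\Frob_w$ acts trivially on $\ap$. Second, since $\kc/\cK$ is the cyclotomic $\Z_p$-extension and $q_v\neq p$, the residue field $k_w$ is the $\Z_p$-extension of the residue field $\mathbb F_v$ of $v=w\mid\cK$ (of cardinality $q_v^{r_v}\equiv1\bmod p$); hence $\Frob_w$ generates the maximal prime-to-$p$ closed subgroup of $\Gal(\overline{\mathbb F}_v/\mathbb F_v)\cong\widehat{\Z}$ and acts on every finite group $\ap[p^n]$ through an automorphism of order prime to $p$. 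This makes $B=\ap^{\Frob_w}$ divisible of corank $r=\dim_{\mathbb F_p}\ap[p]^{\Frob_w}$, and it shows $\Frob_w$ is trivial on $\ap[p]$ iff $\Frob_v$ has $p$-power order there, i.e.\ acts unipotently on $\ap[p]$. Finally, the characteristic polynomial of $\Frob_v$ on $\ap[p]=T_p(k-1)/pT_p(k-1)$ has trace $\equiv b_v$ and determinant $\equiv1$ modulo $p$ (by the definition of $b_v$ and $q_v^{r_v}\equiv1\bmod p$), hence is congruent to $X^2-b_vX+1$; this equals $(X-1)^2$ precisely when $b_v\equiv2\bmod p$, i.e.\ when $\ord_p(2-b_v)>0$, which under the hypotheses of the lemma is the condition defining $\mathcal P_2$. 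Assembling the pieces: $r=2$ and $h_{\Delta}\bigl(H^1(F_u^{\cyc},C_w)\bigr)=p^{-2}$ when $w\in\mathcal P_2$, whereas $r=0$, $B=0$, and $h_{\Delta}\bigl(H^1(F_u^{\cyc},C_w)\bigr)=1$ otherwise. The only genuinely fiddly point in all this is the residue-field bookkeeping in the second observation — that the $\Gal(\overline{\mathbb F}_v/\mathbb F_v)$-action on each $\ap[p^n]$ factors through a prime-to-$p$ procyclic quotient, so that "the semisimple part of $\Frob_v$ modulo $p$" is what governs $B$ — and this is precisely the analogue of the corresponding argument in \cite{H-M}.
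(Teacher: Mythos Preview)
Your argument is correct, and the route is genuinely different from the paper's, though both arrive at the same numerical criterion $b_v\equiv 2\pmod p$.

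The paper works at the finite level first: it computes $\#C_w(\cK_v)$ explicitly via the snake lemma applied to $\Frob_v-1$ on $0\to T_p\to V_p\to C_w\to 0$, identifying it with the $p$-part of $P_v(f/\cK,1)\equiv 2-b_v\pmod p$. It then runs a separate pro-$p$/Nakayama argument to show that $C_w(\cK_v)\ne 0$ forces $\cK_v(C_w)=\kc_w$, i.e.\ $C_w(\kc_w)=C_w$, giving the dichotomy corank $0$ or $2$. You bypass both steps by working directly at $\kc_w$: the key structural input is that the residue field $k_w$ is the $\Z_p$-extension of $\mathbb F_v$, so $\Gal(\bar k_w/k_w)\cong\prod_{\ell\ne p}\Z_\ell$ is pro-prime-to-$p$, whence taking invariants is exact on $p$-primary modules and $B=C_w(\kc_w)$ is automatically divisible with corank read off from the semisimple part of $\Frob_v$ on $A_{p^\infty}[p]$. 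Your approach is more streamlined and avoids the two-stage argument; the paper's approach extracts a little more (the exact order of $C_w(\cK_v)$), though that extra information is not used in the lemma.

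Two minor remarks. First, the appeal to ``the same reasoning as in Lemma~\ref{3.3}'' is slightly off: that lemma only invokes the triviality of the Herbrand quotient on finite modules, whereas what you need here is the (equally easy) computation $h_\Delta\bigl((\Q_p/\Z_p)^r\bigr)=p^{-r}$ for trivial $\Delta$-action. Second, since $k_w$ is infinite there is no canonical Frobenius; your ``$\Frob_w$'' should be read as a chosen topological generator of the procyclic group $\Gal(\bar k_w/k_w)$. Neither point affects the validity of the proof.
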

\proof
The first assertion of the lemma is clear since $w$ must ramify in $F^{\cyc}$ because $q_w$ divides $m.$ As $(q_w,Np)=1,$ we know that the inertial subgroup $I_w$ of the absolute Galois group of $\Q_{q_w}$ acts trivially on $V_p.$ We claim that $I_w$ also acts trivially on $C_w=\ap.$ Indeed, we have an exact sequence of Galois modules
$$
0 \to T_p \to V_p\to C_w\to 0,
$$
whence one obtains the long exact sequence
\begin{equation}\label{51}
0\to T_p \to V_p \to C_w^{I_w} \to H^1(I_w,T_p) \to H^1(I_w,V_p).
\end{equation}
As the inertial action is trivial on $T_p$ and $V_p$, and $q_w\neq p,$ we see that
$$
H^1(I_w,T_p)={\rm Hom}(J_w,T_p),~~~H^1(I_w,V_p)={\rm Hom}(J_w,V_p),
$$
where $J_w$ is the Galois group of the unique tamely ramified $\Z_p$-extension of ${\cK}_w^{\cyc}.$ Thus the last map in \eqref{51} is injective, and so $C_w^{I_w}=C_w$ as claimed.

\medskip

Let  $v$ be the restriction of $w$ to ${\cK}$. We next show that $C_v({\cK}_v)\neq 0$ if and only if $v\in{\mathcal P}_2.$ Since $C_v$ is unramified, we have the commutative diagram with exact rows
$$
\begin{CD}
0 @>>>T_p @>>>V_p @>>> C_w @>>>0\\
& & @V{\delta_v}VV @V{\delta_v}VV @V{\delta_v}VV&&\\
0 @>>>T_p @>>>V_p @>>> C_w @>>>0
\end{CD}
$$
where $\delta_v$ is the map given by applying ${\rm Frob}_v -1.$ The characteristic polynomial of ${\rm Frob}_v$ acting on $V_p $ is  $X^2P_v(f/{\cK},X^{-1}).$ The roots of this polynomial have complex absolute value $q_v^{r_v(k-1)/2},$ and thus are distinct from 1. Hence the middle vertical map in the above diagram is an isomorphism. It follows from the snake lemma that $C_w(K_v)$ has order
equal to the cokernel of the left hand vertical map, which is equal to the exact power of $p$ dividing
$P_v(f/{\cK},1).$ But
$$
P_v(f/{\cK},1)=1-b_v+q_v^{r_v(k-1)} \equiv (2-b_v)\,{\rm mod}\, p,
$$
showing that  $C_w(K_v)\neq 0$ if and only if $w \in {\mathcal P}_2.$

\medskip

Our next claim is that $C_w=C_w({\kc})$ if and only if $C_w({\cK}_v)\neq 0.$ As $\Gal({\cK}_w^{\cyc}/{\cK}_v)$ is pro-$p$, Nakayama's lemma shows that $C_w({\cK}_v)=0$ implies that $C_w({\cK}_w^{\cyc})=0.$ Conversely, assume that $C_w({\cK}_v)\neq 0.$ We then assert that the extension
${\cK}_v(C_w)$ is a pro-$p$ extension of ${\cK}_v.$ To prove this, let $(C_w)_p$ be the kernel of
multiplication by $p$ on $C_w.$ It is easily seen that  the extension ${\cK}_v(C_w)/{\cK}_v((C_w)_p)$ is pro-$p$. On the other hand, choosing an ${\mathbb F}_p$-basis of $(C_w)_p$ in which the first element belongs to $C_w({\cK}_v),$ and noting that the determinant of $(C_w)_p$ is trivial because it is equal to $\omega ^{r_v(k-1)},$ where $\omega$ is the cyclotomic character mod $p$, it follows that the extension ${\cK}_v((C_w)_p)/{\cK}_v$ is a $p$-extension. Thus ${\cK}_v(C_w)$ is a pro-$p$ extension of ${\cK}_v,$ and it is unramified  as inertia acts trivially on $C_w.$ Hence we must have
${\cK}_v(C_w)={\cK}_w^{\cyc}.$

\medskip

It is now clear from  \eqref{50} that  $h_{\Delta}(H^1(F_u^{\cyc},C_w))=0$ if $w \not \in {\mathcal P}_2,$  and $h_{\Delta}(H^1(F_u^{\cyc},C_w))=p^{-2}$ if $w \in {\mathcal P}_2.$ This completes the proof of the lemma.
\qed

\begin{lem}\label{3.5}
Assume that $w$ is a prime of ${\kc}$ such that $\ord_{q_w} N=1$ and $q_w$ divides $m$. Then there is a unique prime $u$ of $F^{\cyc}$ above $w$, and $h_{\Delta}(H^1(F_u^{\cyc},C_w))=p^{-1}$ if $w \in{\mathcal P}_1,$ and $h_{\Delta}(H^1(F_u^{\cyc},C_w))=1$ otherwise.
\end{lem}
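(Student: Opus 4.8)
The plan is to imitate the proof of Lemma \ref{3.4}, the one new feature being that, since $\ord_{q_w}N=1$, the $p$-adic representation $\tau_p$ of $f$ is ramified at $q_w$ (of Steinberg type, up to an unramified quadratic twist): the inertia group $I_{q_w}$ acts on $V_p$ through a nontrivial unipotent, so $V_p^{I_{q_w}}$ is one-dimensional, and by \eqref{45'} the element $\Frob_v^{-1}$ acts on $V_p^{I_{q_w}}$ by $b_v=a_{q_v}^{r_v}$, an integer satisfying $b_v^2\equiv1\bmod p$, hence $b_v\equiv\pm1\bmod p$. First, since $q_w\mid m$, the prime $w$ ramifies in $F^{\cyc}/\kc$ and so does not split; as $\Delta$ is cyclic of order $p$, there is a unique prime $u$ of $F^{\cyc}$ above $w$, and $F_u^{\cyc}/\cK_w^{\cyc}$ is totally and tamely ramified of degree $p$. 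Next, because $q_w\mid pm$ and both $F_u^{\cyc}$ and $\cK_w^{\cyc}$ contain $\mu_{p^\infty}$, formula \eqref{50'} applies and reduces the problem to computing the Herbrand quotient of $C_w(F_u^{\cyc})=H^0(F_u^{\cyc},\ap)$.

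To analyse this $H^0$, observe that since $q_w\ne p$ the twist by $\chi_p^{k-1}$ is unramified at $q_w$. Taking $I_{q_w}$-invariants in $0\to T_p(k-1)\to V_p(k-1)\to\ap\to 0$ and using that $V_p(k-1)^{I_{q_w}}$ is a line, one sees that $\ap^{I_{q_w}}$ is the extension of a finite group (coming from the monodromy-induced torsion in $H^1(I_{q_w},T_p(k-1))$) by its maximal divisible subgroup $D$, which is a single copy of $\Q_p/\Z_p$; moreover the decomposition group $D_{q_w}$ acts on $D$ through an \emph{unramified} character $\eta$ on which $\Frob_v$ acts by a unit congruent to $b_v\bmod p$, and $\eta$ is, up to an unramified quadratic twist, the $(k/2)$-th power of $\chi_p$ (this follows from $\det\tau_p=\chi_p^{1-k}$, which makes the two Frobenius eigenvalues on $V_p|_{D_{q_w}}$ symmetric about $\chi_p^{k/2}$).

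Then I would split into cases as in Lemma \ref{3.4}. If $w\in\mathcal P_1$, i.e. $b_v\equiv1\bmod p$, the quadratic twist is absent, so $\eta=\chi_p^{k/2}$ near $q_w$; as $\chi_p$ is trivial on $G_{\kc}$, the group $G_{\cK_w^{\cyc}}$ acts trivially on $D$, and since $G_{F_u^{\cyc}}\subseteq G_{\cK_w^{\cyc}}$ the divisible part of $C_w(F_u^{\cyc})$ is all of $D\cong\Q_p/\Z_p$ with trivial $\Delta$-action, where $\Delta=\Gal(F_u^{\cyc}/\cK_w^{\cyc})$. Writing $C_w(F_u^{\cyc})\cong\Q_p/\Z_p\oplus(\text{finite})$ and using that the Herbrand quotient of a finite module is $1$ while $h_\Delta(\Q_p/\Z_p)=p^{-1}$ for the trivial action, we get $h_\Delta(C_w(F_u^{\cyc}))=p^{-1}$. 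If $w\notin\mathcal P_1$, then $b_v\equiv-1\bmod p$; since $q_w^{r_v}\equiv1\bmod p$ gives $b_v\equiv(\pm1)^{r_v}\bmod p$, this forces $r_v$ to be odd, and as $p$ is odd the mod-$p$ character by which any Frobenius of $\cK_w^{\cyc}$ acts on $D[p]$ is a $p$-power-exponent power of $\bar\eta(\Frob_{q_w})$, hence congruent to $b_v\equiv-1\bmod p$; therefore the divisible part of $C_w(F_u^{\cyc})\subseteq\ap^{I_{q_w}}$ vanishes, $C_w(F_u^{\cyc})$ is finite, and $h_\Delta(C_w(F_u^{\cyc}))=1$. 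Together with \eqref{50'} this gives the stated values of $h_\Delta(H^1(F_u^{\cyc},C_w))$.

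The main obstacle is the case $w\in\mathcal P_1$: one must check that passing from $\cK_w^{\cyc}$ up the totally ramified degree-$p$ extension $F_u^{\cyc}$ creates no new $\ap$-invariants, which rests on the fact that the extension of $\cK_w$ cutting out the action on $D$ is unramified pro-$p$ (it is cut out by a power of $\chi_p$) and so is already contained in $\cK_w^{\cyc}$ — the exact analogue of the Nakayama argument in Lemma \ref{3.4}. A secondary, purely bookkeeping, point is to isolate the finite ``monodromy'' contribution to $\ap^{I_{q_w}}$ from the divisible part $D$; being finite, it contributes $1$ to the Herbrand quotient and so is harmless.
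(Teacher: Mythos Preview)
Your overall strategy matches the paper's: reduce via \eqref{50'} to $h_\Delta(C_w(F_u^{\cyc}))$, use that at a Steinberg prime the divisible part of $\ap^{I_{q_w}}$ is a single copy $D\cong\Q_p/\Z_p$ with unramified Galois action, and decide from $b_v\bmod p$ whether $D$ survives in $C_w(F_u^{\cyc})$.

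There is one genuine slip. In the case $w\in\mathcal P_1$ you assert ``the quadratic twist is absent, so $\eta=\chi_p^{k/2}$''. But writing $a_{q_v}=\varepsilon\,q_v^{(k-2)/2}$ with $\varepsilon\in\{\pm1\}$, one has $b_v\equiv\varepsilon^{\,r_v}\pmod p$; thus $b_v\equiv1$ also occurs when $\varepsilon=-1$ and $r_v$ is even, in which case the quadratic twist is present. Your conclusion is still correct, because then the quadratic character already restricts trivially to $G_{\cK_v}$; but the argument as written needs this extra sentence. The paper sidesteps the whole issue by never identifying $\eta$ with a specific power of $\chi_p$: it simply notes that $\eta$ is an unramified $\Z_p^\times$-valued character of $G_{\cK_v}$, so its restriction to $\Gal(\cK_v^{\rm nr}/\cK_w^{\cyc})$ (and, since $F_u^{\cyc}/\cK_w^{\cyc}$ is totally ramified, to $\Gal(F_{v'}^{\rm nr}/F_u^{\cyc})$ as well) is trivial iff $\eta(\Frob_v)\in1+p\Z_p$, i.e.\ iff $b_v\equiv1\pmod p$. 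This handles both fields at once and immediately yields the trivial $\Delta$-action on $D$ when $w\in\mathcal P_1$.

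A smaller point: what you flag as the ``main obstacle'' is not one. The upper bound $C_w(F_u^{\cyc})^{\rm div}\subseteq D$ is automatic, because $I_{F_u^{\cyc}}$ is open in $I_{\cK_v}$ and $V_p^{I}$ stays one-dimensional for every open $I$ (the monodromy is infinite). The real content is the lower bound $D\subseteq C_w(\cK_w^{\cyc})$, which is exactly the triviality of $\eta$ on $G_{\cK_w^{\cyc}}$ just discussed; no Nakayama-type argument is needed here.
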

\proof
The first assertion is clear, since $w$ must ramify in $F^{\cyc},$ because $q_v$ divides $m$.
Again, let $v$ be the restriction of $w$ to ${\mathcal K}.$  Since $\ord_{q_w}N=1,$ we have
$$
P_v(f/{\cK},X)=1-b_vX
$$
where we recall that $b_v^2 \equiv 1\,{\rm mod}\, p$.  Let $W_p$ be the subspace $V_p^{I_p}$ of $V_p, $ so that $\Gal(\bar{\Q}_{q_v}/F_v)$ acts on $W_p$ via the unramified character $\eta$ with
$\eta({\rm Frob}_v)=b_v.$ Choosing a basis of $V_p$ with the first basis element being a basis of $W_p,$ the representation of $\Gal(\bar{\Q}_{q_v}/F_v)$ on $V_p$ must be of the form
$\left(\begin{matrix} \eta &*\\0 & \lambda \end{matrix}\right)$, where $\lambda$ is a character of $\Gal(\bar{\Q}_{q_v}/F_v).$ As the determinant of $V_p$ is the cyclotomic character to the power $(k-1),$ we conclude that $\lambda$ is also unramified.  Moreover, the image of the restriction of this representation to $\Gal(\bar{\Q}_{q_v}/{\cK}_v^{\rm nr})$ is infinite, where ${\cK}_v^{\rm nr}$ is the maximal unramified extension of ${\cK}_v.$ Since $\eta$ takes values in $\Z_p^{\times},$ it is clear that the restriction of $\eta$ to $\Gal({\cK}_v^{\rm nr}/{\cK}_w^{\cyc})$ is the trivial character if and only if $w\in {\mathcal P}_1.$ Similarly, writing $v'$ for the restriction of $u$ to $F$, and recalling that $F_{v'}/{\cK}_v$ is totally ramified, it follows that  the restriction of $\eta$ to $\Gal(F_{v'}^{\rm nr}/F_u^{\cyc})$ is the trivial character if and only if $w \in {\mathcal P}_1.$ One concludes easily that, if $w \not \in {\mathcal P}_1,$ then $C_w(F_u^{\cyc})$ must be finite, and if $w \in{\mathcal P}_1,$ then the divisible subgroup of $C_w(F_u^{\cyc})$ has $\Z_p$-corank  1. In view of \eqref{50}, the assertion of the lemma is now clear.

\qed

\medskip

Combining \eqref{48}, \eqref{49}, and Lemmas 3.3, 3.4 and 3.5, the proof of  Theorem \ref{3.2} is now complete.
\qed

\section{The congruence from non-commutative Iwasawa theory}

As before,  let
\begin{equation}\label{52}
F=\Q(\mu_{p},m^{1/p}),~~~~{\cK}=\Q(\mu_{p})
\end{equation}
where $p$ is an odd prime, and $m>1$ is an integer which is not divisible by the $p$-th power of an integer $>1.$ Assume throughout this section that Hypotheses H1, H2, and H3 are valid. Let $\phi$ be an Artin representation of $\Gal(F_{\infty}/\Q)$. For each integer $n=1,\dots, k-1,$ we recall that $L_p^*(f,\phi,n)$ is defined by \eqref{17}. By Theorem \ref{2.3}, we know that $L_p^*(f,\phi,n)$ is an algebraic number.  Very roughly speaking, the non-commutative $p$-adic $L$-function seeks to interpolate the numbers $L_p^*(f,\phi,n),$ as $\phi,$ and $n$ both vary. While there has been important recent progress on the study of these non-commutative $p$-adic $L$-functions for the Tate motive over totally real number fields (see \cite{Ka},\cite{RW}), very little is still known about their existence for other motives, including the motive attached to  our modular form $f$. In the present paper, we shall only discuss what is perhaps the simplest congruence between abelian $p$-adic $L$-functions, which would follow from the existence of a non-commutative $p$-adic $L$-function for the motive of $f$ over the field $F_{\infty}$. A specialization of this congruence for elliptic curves has been studied in the earlier paper \cite{DD}.

\medskip

To state this congruence, we must first make a canonical modification of the values $L_p^*(f,\phi,n),$ given by \eqref{17} following \cite{CFKSV}, \cite{FK}. Recall that  since $(p,a_pN)=1,$ the Euler factor
$$
P_p(f,X)=1-a_pX+p^{k-1}X^2
$$
can be  written as
\begin{equation}\label{53}
P_p(f,X)=(1-\alpha X)(1-\beta X),
\end{equation}
where $\alpha$ is a unit in $\Z_p$, and $\ord_p(\beta)=k-1.$ We shall also need the Euler factors of the complex $L$-series $L(\phi,s)$ of the Artin representation $\phi$, which are defined by
\begin{equation}\label{54}
P_q(\phi,X)=\det\left(1-{\rm Frob}_q^{-1}X\mid M_l(\phi)^{I_q}\right)
\end{equation}
where $l$ is any prime distinct from $q$. As before, let $d(\phi)$ be the dimension of $\phi$.
Moreover, writing  ${\mathcal N}(\phi)$ for the conductor of $\phi$, define
\begin{equation}\label{55}
e_p(\phi)=\ord_p({\mathcal N}(\phi)).
\end{equation}
Recall that $P_q(f,\phi,X)$ defined by
\eqref{10'}  is the Euler factor at the prime $q$ of the complex $L$-function $L(f,\phi,s)$. Recall also that, for $n=1,\dots k-1$,
\begin{equation}\label{cr}
L_p^*(f,\phi,n)= \frac{ L(f,\phi,n) \epsilon_p(\phi)}{\left((2\pi i)^{nd(\phi)}\times \Omega_+(f)^{d^+_n(\phi)}\times|\Omega_{-}(f)|^{d^{-}_n(\phi)}\right)}.
\end{equation}
We then define
\begin{equation}\label{56}
M_p(f,\phi,n)= \Gamma(n)^{d(\phi)}\times L_p^*(f,\phi,n)\times  P_p(f,\phi,p^{-n})\times \frac{P_p(\hat{\phi},p^{n-1}/\alpha)}{P_p(\phi,\alpha/p^n)}\times ({p^{n-1}}/{\alpha})^{e_p(\phi)},
\end{equation}
and
\begin{equation}\label{56''}
{\cL}_p(f,\phi,n)=M_p(f,\phi,n)\,\underset{q\neq p,\,q\mid m}{\prod}\,P_q(f,\phi,q^{-n}),
\end{equation}
where $q$ runs over the prime factors of $m$ distinct from $p$. It is these modified $L$-values, defined using the naive periods $\Omega_+(f)$ and $\Omega_-(f)$,  which we shall actually compute in a number of numerical examples.

\medskip

Secondly, in order to obtain $p$-adic $L$-functions which will in the end satisfy the main conjectures of Iwasawa theory, we may also have to adjust the naive periods $\Omega^+(f)$ and
$\Omega^-(f)$ by certain non-zero rational numbers.  Writing $\Omega_+^{\can}(f)$ and $\Omega_-^{\can}(f)$ for these canonical periods, we will have
\begin{equation}\label{56'}
\Omega_+^{\can}(f)=c_+(f)\Omega_+(f),~~\Omega_-^{\can}(f)=c_ -(f) \Omega_-(f)
\end{equation}
for certain non-zero rational numbers $c_+(f)$ and $c_-(f).$ It is then natural to define
\begin{equation}\label{57'}
{\cL}_p^{\can}(f,\phi,n)=c_+(f)^{-d_n^+(\phi)} c_-(f)^{-d_n^-(\phi)}{\cL}_p(f,\phi,n).
\end{equation}
It is these modified values ${\cL}_p^{\can}(f,\phi,n)$ which should satisfy the non-abelian congruences for the $p$-adic $L$-functions arising in the main conjectures.
However, in our present state of knowledge, we do not know in general how to determine $c_+(f)$ and $c_-(f)$ precisely. Nevertheless, as we shall now explain, the work of Manin on the $p$-adic $L$-function of $f$ for the extension ${\cK}_{\infty}/\Q$ provides some partial information on this question.

\medskip

\begin{thm}\label{man}
Let $\sigma$ be the sum of the irreducible characters of $\Gal({\cK}_1/\Q),$ where ${\cK}_1=\Q(\mu_p).$ If $L(f,\sigma,k/2)=0,$ then
${\cL}_p^{\can}(f,\sigma,n)$ belongs to $p\Z_p$ for $n=1,\dots, k-1.$
\end{thm}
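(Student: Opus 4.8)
The plan is to deduce the theorem from Manin's construction of the cyclotomic $p$-adic $L$-function, in the precise form recorded in \eqref{1.}: there is a power series $H(\sigma,T)\in R=\Z_p[[T]]$ with
$$
H(\sigma,\psi(\gamma)u^{r}-1)={\cL}_p^{\can}(f,\sigma\psi,k/2+r)
$$
for every $\psi\in\Xi$ and every integer $r$ with $-k/2+1\le r\le k/2-1$. First I would specialize at the trivial character $\psi=1$ (which lies in $\Xi$, and for which $\sigma\psi=\sigma$), obtaining $H(\sigma,u^{r}-1)={\cL}_p^{\can}(f,\sigma,k/2+r)$; as $r$ runs through $-k/2+1\le r\le k/2-1$ the integer $n=k/2+r$ runs exactly through $\{1,\dots,k-1\}$, and $r=0$ is the central point $n=k/2$.

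Next I would observe that the hypothesis $L(f,\sigma,k/2)=0$ forces $H(\sigma,0)=0$. By \eqref{cr}, \eqref{56}, \eqref{56''} and \eqref{57'}, the value ${\cL}_p^{\can}(f,\sigma,k/2)$ is the product of $L(f,\sigma,k/2)$ with a quantity assembled from the non-zero rational numbers $c_\pm(f)^{-1}$, the $\epsilon$- and $\Gamma$-factors, the naive periods, finitely many Euler polynomials $P_q(f,\sigma,q^{-k/2})$ with $q\mid m$, and the ratio of local factors at $p$; all of these are finite, and the only factor sitting in a denominator, $P_p(\sigma,\alpha/p^{k/2})$, is non-zero because the roots of $P_p(\sigma,X)$ are roots of unity (hence $p$-adic units under any fixed embedding) while $\alpha/p^{k/2}$ has negative $p$-adic valuation. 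Hence ${\cL}_p^{\can}(f,\sigma,k/2)=0$, so $H(\sigma,0)=0$, and therefore $H(\sigma,T)=T\,G(T)$ for some $G\in R=\Z_p[[T]]$. This verification that no factor of ${\cL}_p^{\can}(f,\sigma,k/2)$ can blow up or vanish in a denominator is the one place where a little care is needed.

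It then remains to evaluate: for $n\in\{1,\dots,k-1\}$, writing $r=n-k/2$,
$$
{\cL}_p^{\can}(f,\sigma,n)=H(\sigma,u^{r}-1)=(u^{r}-1)\,G(u^{r}-1).
$$
Since $p$ is odd, $\chi_p$ identifies $\Gamma$ with the group $1+p\Z_p$ of principal units, so $u=\chi_p(\gamma)\in 1+p\Z_p$ and hence $u^{r}-1\in p\Z_p$ for every $r\in\Z$; and because $G$ has coefficients in $\Z_p$, the element $G(u^{r}-1)$ lies in $\Z_p$. Therefore ${\cL}_p^{\can}(f,\sigma,n)=(u^{r}-1)G(u^{r}-1)\in p\Z_p$, which is the assertion. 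The only substantive input is Manin's integrality statement $H(\sigma,T)\in\Z_p[[T]]$ (together with the identification of the canonical periods in \S5 that makes \eqref{1.} hold on the nose for ${\cL}_p^{\can}$); granting this, everything above is formal.
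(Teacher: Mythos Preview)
Your proof is correct and follows essentially the same route as the paper: invoke Manin's integral power series, use the vanishing $L(f,\sigma,k/2)=0$ to get $H(\sigma,0)=0$ and hence $H(\sigma,T)\in TR$, and then evaluate at $u^{r}-1\in p\Z_p$. The only packaging difference is that the paper cites Manin in the form $g(u^{r}-1)=M_p^{\can}(f,\sigma,k/2+r)$ (i.e.\ without the extra Euler factors at $q\mid m$, $q\neq p$) and then appends the observation that $\prod_{q\neq p,\,q\mid m}P_q(f,\sigma,q^{-n})\in\Z_p$ to pass from $M_p^{\can}$ to ${\cL}_p^{\can}$, whereas you quote the already-packaged interpolation \eqref{1.} for ${\cL}_p^{\can}$ directly.
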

\proof
Let $\chi_p$ be the character giving the action of $\Gal(\bar{\Q}/\Q)$ on $\mu_{p^{\infty}}$. Fix a topological generator $\gamma$ of $\Gal({\cK}^{cyc}/K_1)$ and put $u=\chi_p(\gamma).$ The work of Manin then shows \cite{M} that there exists a power series $g(T)$ in $\Z_p[[T]]$ such that
\begin{equation}\label{57''}
g(u^r-1)=M_p^{\can}(f,\sigma,k/2+r),
\end{equation}
for all integers $r$ with $-k/2+1 \leq r \leq k/2-1,$
and where
$$
M_p^{\can}(f,\sigma,n)=(c_+(f)c_-(f))^{(1-p)/2}\, M_p(f,\sigma,n).
$$
Here it is understood that the canonical periods are those for which we expect $g(T)$ to be a characteristic power series for the dual Selmer group of $f$ over ${\mathcal K}_{\infty}.$ Assuming that $L(f,\sigma,k/2)=0,$ it follows that
$$
g(0)=0,
$$
and so $g(u^n-1)\in p\Z_p$ for all integers $n$. The assertion of the theorem then follows on noting that
$$
\underset{q\neq p,\,q\mid m}{\prod}\,P_q(f,\sigma,q^{-n})
$$
lies in $\Z_p$ for all $n \in \Z.$ This completes the proof.
\qed

\medskip

\begin{eg}
{\rm Take $f$ to be the unique primitive eigenform of level 7 and weight 4, and  $p=3.$ Then $L(f,\sigma,2)=0.$  Moreover, we see from Table II in \S 6 that ${\cL}_3(f,\sigma,1)\in 3\Z_3.$  In view of Theorem \ref{man}, this strongly suggests that in this case, we must have $\ord_3(c_+(f))=\ord_3(c_-(f))=0.$}
\end{eg}

\begin{eg}
{\rm Take $f$ to be the complex multiplication form of  level 121 and weight 4, which is attached to the cube of the Gr\"ossencharacter of the elliptic curve over $E$ over $\Q$ given by the equation \eqref{cme}, of conductor 121 and with complex multiplication by the full ring of integers of the field $L = \Q(\sqrt{-11})$, and again take $p=3.$ Then $L(f,\sigma,2)=0$. However, we see from Table III in \S 6 that ${\cL}_3(f,\sigma,1)$ is a 3-adic unit when $m=3 ,\,7$ or 11. Hence the naive periods $\Omega^+(f)$ and $\Omega^-(f)$ cannot be the good periods, and at least one of $\ord_3(c_+(f))$ or $\ord_3(c_-(f))$ must be strictly less than zero. In fact, in this case we do know the canonical periods for $f$, since, for all good ordinary primes $p$ for $f$, we know the periods for which the relevant cyclotomic main conjecture for $f$ over $\cK^{cyc}$ is valid. This is because this cyclotomic main conjecture can easily be deduced from the main conjecture for $E$ over the field obtained by adjoining to $L$ the coordinates of all $p$-power division points on $E$;  and this latter main conjecture is proven for all good ordinary primes $p$ for $E$ by the work of Yager and Rubin. Invoking the Chowla-Selberg formula, we see easily that the explicit values of these canonical periods can be taken as follows. Let
$$
\Theta = \Gamma(1/11)\Gamma(3/11)\Gamma(4/11)\Gamma(5/11)\Gamma(9/11).
$$
Then
\begin{equation}\label{cp}
\Omega_+^{\can}(f) = \sqrt{11}\times \Theta^3/(2\pi)^9, \, \Omega_-^{\can}(f) = i\Theta^3/(2\pi)^9.
\end{equation}
Direct computations show that
\begin{equation}\label{cp1}
\Omega_+(f)/\Omega_+^{\can}(f) = 1/22, \, \Omega_-(f)/\Omega_-^{\can}(f) = 3,
\end{equation}
whence
\begin{equation}\label{cp2}
\ord_3(c_+(f)) = 0, \,  \ord_3(c_-(f)) =-1,
\end{equation}
precisely as required.}

\end{eg}

As in the Introduction,  let $\sigma$ be the Artin representation of dimension $(p-1)$ given by the direct sum of the one dimensional characters of $\Gal({\cK}/\Q).$  Define $\rho$ to be the representation of $\Gal(F/\Q)$ induced from any non-trivial degree one character of $\Gal(F/{\cK})$. Thus $\rho$ also has dimension $(p-1),$ and is easily seen to be irreducible (cf.  \cite{DD}). Moreover, both $\sigma$ and $\rho$ are self-dual, can be realized over $\Z$, and their reductions modulo $p$ are isomorphic.  Let $R = \Z_p[[T]]$ be the ring of formal power series in an indeterminate $T$ with coefficients in $\Z_p$.  As explained in the Introduction, the work of Manin \cite{M} establishes the existence of a power series $H(\sigma, T)$ in $R$ satisfying the interpolation property \eqref{1.} It is conjectured that there exists a power series $H(\rho, T)$ in $R$ satisfying the interpolation condition \eqref{1.1}.

\begin{conj}(Congruence Conjecture).\label{cc}
Assume Hypotheses H1, H2, H3. Then there exists a power series $H(\rho, T)$ in $R$ satisfying the interpolation property \eqref{1.1},
and we have the congruence of power series
\begin{equation}\label{cc1}
H(\rho, T) \, \, \equiv \, \, H(\sigma, T) \, \,{\rm mod} \, \, pR.
\end{equation}
\end{conj}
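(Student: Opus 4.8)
The plan is to split the statement into its two assertions --- existence of $H(\rho,T)$ and the congruence \eqref{cc1} --- and, following Kato, to read the congruence as the shadow of an equality of characteristic ideals of cyclotomic Selmer groups. The power series $H(\sigma,T)$ already exists unconditionally by Manin's work, so only $H(\rho,T)$ is genuinely at issue. By Kato's Theorem~\ref{3.1}, together with Shapiro's lemma and descent along the finite extension $F^{\cyc}/\kc$ (or, when Hypothesis H4 holds, directly by Theorem~\ref{3.2}), the $\rho$-twisted dual Selmer group $X(\ap\otimes\rho/\Q^{\cyc})$ --- equivalently the non-trivial $\Gal(F^{\cyc}/\kc)$-isotypic part of $X(\ap/F^{\cyc})$ --- is a torsion $R$-module, and so has a well-defined characteristic power series. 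The anticipated cyclotomic main conjecture for the $\rho$-twist of $f$ would then identify $H(\rho,T)$ with this characteristic power series, up to a unit, normalised via \eqref{56'}--\eqref{57'} so as to satisfy the interpolation property \eqref{1.1} predicted by Theorem~\ref{2.3}. When $f$ has complex multiplication this main conjecture is available from Rubin's proof of the CM main conjecture and Yager's work --- the route of Delbourgo--Ward and Kim --- and then the argument below is unconditional; for general $f$ one would instead try to construct $H(\rho,T)$ analytically, e.g.\ by running Manin's modular-symbol method over the imaginary abelian subfields of $\Fin$, or the Rankin--Selberg construction underlying \S2 with the weight-one CM form attached to $\rho$ in the role of $\g$, so as to produce a bounded measure on $\Gamma$ interpolating the algebraic values $L_p^*(f,\rho\psi,k/2+r)$.

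Granting $H(\rho,T)$ and $H(\sigma,T)$ together with the cyclotomic main conjectures for the $\rho$- and $\sigma$-twists of $f$ (the $\sigma$-twist being a theorem for CM $f$ and known in many non-CM cases), the next step is to rephrase \eqref{cc1} as the assertion that the characteristic ideals of $X(\ap\otimes\rho/\Q^{\cyc})$ and of $X(\ap\otimes\sigma/\Q^{\cyc})\cong X(\ap/\kc)$ coincide modulo $pR$, once the period normalisations on the two sides are matched through \eqref{56'}--\eqref{57'}. Here I would use that $\rho$ and $\sigma$ are both realised over $\Z$ and have isomorphic reduction modulo $p$ --- each becomes the regular representation of $\Gal(\cK/\Q)$ over $\mathbb F_p$ --- so that $\ap\otimes\rho$ and $\ap\otimes\sigma$ are residually isomorphic Galois modules, and deduce an isomorphism of residual Selmer modules $X(\ap\otimes\rho/\Q^{\cyc})\otimes\mathbb F_p\cong X(\ap/\kc)\otimes\mathbb F_p$ as $\mathbb F_p[[T]]$-modules, whence equality of characteristic power series mod $p$. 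Kato's own viewpoint repackages the same mechanism: his theorem on $K_1$ of the relevant non-commutative completed group rings makes the single congruence \eqref{cc1}, together with a finite family of companions, equivalent to the existence of the non-commutative $p$-adic $L$-function of $f$ over $\Fin$, so one could alternatively try to construct the latter directly and read off \eqref{cc1}.

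The delicate point in the residual comparison is that $\rho$ is ramified at the primes $q\mid m$ while $\sigma$ is not, so the local conditions defining the two Selmer groups genuinely differ at those finitely many places, and one must check the discrepancy is invisible mod $p$. This is exactly what the computations of \S4 provide: Lemmas~\ref{3.4} and~\ref{3.5} show that at a prime $w$ above $q\mid m$ the extra local cohomology contributes nothing unless $w$ lies in ${\mathcal P}_1$ or ${\mathcal P}_2$, and there it contributes an explicit power of $p$ governed by $P_v(f/\cK,1)\equiv 2-b_v$ (resp.\ $1-b_v$) mod $p$. On the analytic side the same quantities enter the definition \eqref{56''} of ${\cL}_p(f,\phi,n)$ through the factors $\prod_{q\mid m,\,q\neq p}P_q(f,\phi,q^{-n})$; since $q^{r_v}\equiv 1\bmod p$ one checks factor by factor --- using \eqref{44} for the unramified primes and \eqref{45'} for those with $\ord_q N=1$ --- that $P_q(f,\rho,q^{-n})\equiv P_q(f,\sigma,q^{-n})\bmod p$, so these Euler factors cancel in \eqref{cc1}. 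Thus the only part of the characteristic ideal that can differ between the two twists is the ${\mathcal P}_i$-part isolated by Theorem~\ref{3.2}, and that part is a unit mod $p$.

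The real obstacle is the first step. For $f$ without complex multiplication, neither the $\rho$-twisted cyclotomic main conjecture nor a direct analytic construction of $H(\rho,T)$ with the interpolation property \eqref{1.1} is presently known, whereas everything downstream --- transferring \eqref{cc1} to a congruence of Selmer modules via the main conjecture, invoking the residual isomorphism $\bar\rho\cong\bar\sigma$, and absorbing the primes above $m$ through Theorem~\ref{3.2} --- is, up to technical but routine Iwasawa-theoretic bookkeeping, expected to go through. It is only the production of the twisted $p$-adic $L$-function that demands an essentially new ingredient, which is why the present paper verifies the special-value consequences \eqref{1'} and \eqref{1''} of \eqref{cc1} numerically rather than attempting a proof.
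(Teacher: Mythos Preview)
The statement is a conjecture, and the paper does not prove it; it explains why \eqref{cc1} is expected. That explanation is much shorter and cleaner than your proposal: Kato's theorem on $K_1$ of non-commutative completed group rings gives a map $\theta_{G,S}\colon K_1(\Lambda(G)_S)\to\prod_{n\ge 0}K_1(\Lambda({\frak G}_n)_{S_n})$ whose image is characterised by an explicit family of congruences, one of which is $N(\alpha_0)\equiv\alpha_1\bmod p$. Taking $\alpha$ to be the conjectural non-commutative $p$-adic $L$-function $\zeta(f/\Fin)$, one has $N(\zeta(f/\Fin)_0)=H(\sigma,T)$ and $\zeta(f/\Fin)_1=H(\rho,T)$ by construction of $\theta_{G,S}$ and the interpolation properties, and \eqref{cc1} drops out. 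No Selmer groups, no main conjectures for twists, no Euler-factor bookkeeping: the congruence is a formal consequence of the \emph{existence} of $\zeta(f/\Fin)$ in $K_1(\Lambda(G)_S)$ and Kato's description of the image. You mention this route in one sentence but treat it as an alternative; for the paper it is the whole story.

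Your primary route layers on more hypotheses --- the cyclotomic main conjecture for the $\rho$-twist in addition to the existence of $H(\rho,T)$ --- and contains a concrete error. You claim that for $q\mid m$, $q\ne p$, one has $P_q(f,\rho,q^{-n})\equiv P_q(f,\sigma,q^{-n})\bmod p$. This is false: by Lemma~6.2(3) the $\rho$-factor is identically $1$, while by Lemma~6.2(1)--(2) the $\sigma$-factor is $(1-b_vq^{-r_qn}+q^{r_q(k-1)-2r_qn})^{(p-1)/r_q}$ or $(1-b_vq^{-r_qn})^{(p-1)/r_q}$, which reduces mod $p$ to $(2-b_v)^{(p-1)/r_q}$ or $(1-b_v)^{(p-1)/r_q}$; these are $\equiv 0\bmod p$ precisely on ${\mathcal P}_2$ or ${\mathcal P}_1$, and otherwise need not equal $1$. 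Your next sentence, that ``the only part that can differ is the ${\mathcal P}_i$-part and that part is a unit mod $p$'', is then backwards. The paper's discussion of the link with Theorem~\ref{3.2} is correspondingly more modest: it observes only that when ${\mathcal P}_1\cup{\mathcal P}_2\ne\emptyset$ both ${\cL}_p^{\can}(f,\sigma,n)$ and (conjecturally) ${\cL}_p^{\can}(f,\rho,n)$ lie in $p\Z_p$, a consistency check rather than a step toward a proof.
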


We are grateful to M. Kakde for pointing out to us that the congruence \eqref{cc1} is simply a special case of the congruences
predicted by Kato in \cite{K}, and we now briefly explain why this is the case. Assume for simplicity that Hypothesis H4 is also valid.
Recall that $G$ denotes the Galois group of $F_{\infty}$ over $\Q$, and write $\Lambda(G)$ for the Iwasawa algebra of $G$,
$S$ for the canonical Ore set in $\Lambda(G)$, which is defined in \cite{CFKSV}, and $\Lambda(G)_S$ for its localization at $S$.
In addition, define ${\frak G}_0 = \Gal({\mathcal K}_{\infty}/\Q)$, and for each integer $n \geq 1$, let ${\frak G}_n$  be the unique
closed subgroup of index $p^{n-1}$ in $\Gal(\Q^{\cyc}/\Q)$. Write $S_n$ for the canonical Ore set of \cite{CFKSV} in the Iwasawa algebra
$\Lambda({\frak G}_n)$.  In \cite{K}, Kato defines a canonical map
$$
{\theta}_{G,S} : K_1(\Lambda(G)_S) \to \underset{n \geq 0}{\prod}K_1(\Lambda({\frak G}_n)_{S_n}),
$$
and characterizes its image by a remarkable set of congruences which we do not state in detail here. In particular, writing ${\theta}_{G,S}(\alpha) = (\alpha_n)$ for any element $\alpha$ of  $K_1(\Lambda(G)_S)$, we always have
\begin{equation}\label{cc2}
N(\alpha_0) \equiv \alpha_1 \,  {\rm mod} \, p,
\end{equation}
where $N$ denotes the norm  map from $K_1(\Lambda({\frak G}_0)_{S_0})$ to $K_1(\Lambda({\frak G}_1)_{S_1})$. Now take
$\alpha$ to be the conjectural $p$-adic $L$-function for $f$ over $F_{\infty}$, which we denote by ${\zeta}(f/F_{\infty})$. Let us also identify
$\Lambda({\frak G}_1)$ with the formal power series ring $R = \Z_p[[T]]$ by mapping the fixed topological generator $\gamma$ of ${\frak G}_1$ to $1+T$. Then it follows essentially from the construction of  the map ${\theta}_{G,S}$ and the interpolation properties of these $p$-adic $L$-functions that we will have
$$
N({\zeta}(f/{\Fin})_{0})= H(\sigma, T),  \, {\zeta}(f/{\Fin})_{1}= H(\rho, T).
$$
Thus the congruence \eqref{cc1} is indeed just a special case of the congruence \eqref{cc2} of Kato, as claimed.

\medskip
As was pointed out in the Introduction, if we evaluate both sides of the congruence \eqref{cc1} at the appropriate points in $p\Z_p$, we deduce the following congruence of normalized $L$-values from \eqref{1.} and \eqref{1.1}:-

\begin{conj}\label{4.1}
Assume Hypotheses H1, H2 and H3. Then for all integers $n=1,\dots, k-1$, we have
\begin{equation}\label{57}
{\mathcal L}_p^{\can}(f,\rho,n) \equiv {\mathcal L}_p^{\can}(f,\sigma,n)\,{\rm mod}\, p.
\end{equation}
\end{conj}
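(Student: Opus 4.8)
The plan is to obtain the congruence \eqref{57} as a purely formal consequence of the Congruence Conjecture \ref{cc}, by specializing the two power series identities \eqref{1.} and \eqref{1.1} at an appropriate sequence of points of $p\Z_p$. Granting Conjecture \ref{cc}, there is a power series $H(\rho,T)$ in $R=\Z_p[[T]]$ satisfying the interpolation property \eqref{1.1}, Manin's theorem \cite{M} supplies the power series $H(\sigma,T)$ in $R$ satisfying \eqref{1.}, and $H(\rho,T)\equiv H(\sigma,T)\bmod pR$. Everything else is elementary $p$-adic analysis.

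In detail, first recall that $\gamma$ is a fixed topological generator of $\Gamma=\Gal(\Q^{\cyc}/\Q)$ and $u=\chi_p(\gamma)$; since $p$ is odd, $\chi_p$ identifies $\Gamma$ with the group $1+p\Z_p$ of principal units, so $u\in 1+p\Z_p$ and therefore $u^r-1\in p\Z_p$ for every $r\in\Z$ (the inverse and the powers of a principal unit are again principal). Next, write $H(\rho,T)-H(\sigma,T)=p\,G(T)$ with $G(T)\in R$; for any $t\in p\Z_p$ the series $G(t)$ converges $p$-adically to an element of $\Z_p$, since the coefficients of $G$ lie in $\Z_p$ and $|t|<1$, so that
\begin{equation*}
H(\rho,t)-H(\sigma,t)=p\,G(t)\in p\Z_p .
\end{equation*}
Finally, fix an integer $n$ with $1\le n\le k-1$, set $r=n-k/2$ (an integer, as $k$ is even, lying in the range $-k/2+1\le r\le k/2-1$), and take $\psi=1$ in \eqref{1.} and \eqref{1.1}, so that $\psi(\gamma)u^r-1=u^r-1\in p\Z_p$. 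Those two interpolation identities read $H(\sigma,u^r-1)=\cL_p^{\can}(f,\sigma,n)$ and $H(\rho,u^r-1)=\cL_p^{\can}(f,\rho,n)$, and substituting $t=u^r-1$ in the displayed inclusion yields $\cL_p^{\can}(f,\rho,n)-\cL_p^{\can}(f,\sigma,n)\in p\Z_p$, which is exactly \eqref{57}.

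The analytic and formal content of this argument is entirely routine; the whole weight of the statement rests on Conjecture \ref{cc}, i.e.\ ultimately on the conjectural existence of the non-commutative $p$-adic $L$-function $\zeta(f/\Fin)$ together with Kato's congruences \eqref{cc2} in the first $K$-group of the relevant Iwasawa algebras, and that is where I expect the real obstacle to lie. One might instead hope to prove \eqref{57} directly at the level of $L$-values, bypassing the power series $H(\rho,T)$: since $\rho$ and $\sigma$ are self-dual, can be realized over $\Z$, and have isomorphic reductions modulo $p$, and since $d_n^{+}(\rho)=d_n^{-}(\rho)=d_n^{+}(\sigma)=d_n^{-}(\sigma)=(p-1)/2$ so that the canonical periods enter \eqref{57'} through the same rational factor on both sides, one can reasonably expect the Euler-type factors in \eqref{56} and \eqref{56''} to match modulo $p$ term by term for $\rho$ and for $\sigma$. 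But such an argument would still require a $p$-integrality and interpolation statement on the cyclotomic side for the ``interior'' value $M_p$ occurring in \eqref{56}, which is essentially the content of Manin's construction, so it does not avoid the central difficulty.
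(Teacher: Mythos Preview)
Your derivation is correct and follows exactly the paper's approach: the paper states in a single sentence before Conjecture~\ref{4.1} that one obtains \eqref{57} by evaluating both sides of the power-series congruence \eqref{cc1} at the appropriate points of $p\Z_p$ and invoking \eqref{1.} and \eqref{1.1}, and you have simply written out those elementary steps in full. Your closing remarks about the true difficulty residing in Conjecture~\ref{cc} and Kato's congruences are also in line with the paper's own discussion.
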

\medskip

We end this section by  explaining how this latter congruence is intimately connected with Theorem \ref{3.2}. Let ${\mathcal P}_1$ and ${\mathcal P}_2$ be the set of places of ${\cK}_{\infty}=\Q(\mu_{p^{\infty}})$ defined by \eqref{46} and \eqref{45} respectively.
\begin{lem}\label{4.2}
Let $q$ be any rational prime with $q$ dividing $m$ and $(q,Np)=1.$  Then all primes of  ${\kc}$ above $q$  belong to ${\mathcal P}_2$ if and only if $\ord_p(P_q(f,\sigma,q^{-n}))>0$ for some integer $n$.\end{lem}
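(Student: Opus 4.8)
The plan is to compute the Euler factor $P_q(f,\sigma,X)$ explicitly, identify it with a power of the local factor $P_v(f/\cK,X)$ that already appears in the proof of Theorem~\ref{3.2}, and then invoke the congruence \eqref{44}. First I would observe that, since $(q,Np)=1$, the representation $M_p(f)$ is unramified at $q$, and since $\sigma$ factors through $\Gal(\cK/\Q)$ with $\cK=\Q(\mu_p)$ unramified at $q\neq p$, the representation $M_p(\sigma)$ is unramified at $q$ as well; hence $I_q$ acts trivially on $M_p(f)\otimes_{\bar{\Q}_p}M_p(\sigma)$ and
\[
  P_q(f,\sigma,X)=\det\bigl(1-\Frob_q^{-1}X\mid M_p(f)\otimes_{\bar{\Q}_p}M_p(\sigma)\bigr),
\]
a polynomial of degree $2(p-1)$.

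Next I would use that $\sigma$ is the regular representation of $\Gal(\cK/\Q)\cong(\Z/p\Z)^\times$, i.e.\ is induced from the trivial character of $\Gal(\bar{\Q}/\cK)$, so by the inductive property of $L$-functions (as in \eqref{lind}) $L(f,\sigma,s)=L(f/\cK,s)=\prod_v P_v(f/\cK,(Nv)^{-s})^{-1}$. Comparing Euler factors at $q$, and using that the primes $v$ of $\cK$ above $q$ all share the same residue degree $r_v$ over $\Q$ (the order of $q$ in $(\Z/p\Z)^\times$), hence the same $b_v=\alpha_q^{r_v}+\beta_q^{r_v}$ where $1-a_qX+q^{k-1}X^2=(1-\alpha_qX)(1-\beta_qX)$, and that there are exactly $(p-1)/r_v$ of them, I would deduce the identity of polynomials
\[
  P_q(f,\sigma,X)=\bigl(P_v(f/\cK,X^{r_v})\bigr)^{(p-1)/r_v},
\]
with $P_v(f/\cK,Y)=1-b_vY+q^{r_v(k-1)}Y^2$ as in \eqref{44'}. (Alternatively one can decompose $\sigma=\bigoplus_\chi\chi$ over the characters $\chi$ of $(\Z/p\Z)^\times$, write $P_q(f,\chi,X)=(1-\chi(\Frob_q)\alpha_qX)(1-\chi(\Frob_q)\beta_qX)$, and group the product over $\chi$ according to the value $\chi(\Frob_q)\in\mu_{r_v}$, each attained $(p-1)/r_v$ times; since $\prod_{\zeta\in\mu_{r_v}}(1-\zeta\alpha_qX)=1-\alpha_q^{r_v}X^{r_v}$, the same identity drops out.)

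Finally I would conclude as follows. Because the exponent $(p-1)/r_v$ is a positive integer, $\ord_p\bigl(P_q(f,\sigma,q^{-n})\bigr)>0$ holds for some $n$ if and only if $\ord_p\bigl(P_v(f/\cK,q_v^{-r_vn})\bigr)>0$, where $v$ is any prime of $\cK$ above $q$; and by \eqref{44} this is in turn equivalent to $\ord_p(2-b_v)>0$, a condition independent of $n$. On the other hand, under the hypotheses $q\mid m$ and $(q,Np)=1$ the first two conditions in the definition \eqref{45} of ${\mathcal P}_2$ hold automatically for every place $w$ of $\kc$ above $q$, so such a $w$ lies in ${\mathcal P}_2$ precisely when $\ord_p(2-b_v)>0$ for $v=w\mid\cK$; and since $b_v$ depends only on $q$ and $r_v$, this holds for one $w\mid q$ exactly when it holds for all of them. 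Putting the two halves together yields the lemma. The one step that needs genuine care is the Euler-factor computation in the middle --- equivalently, the grouping of the character product according to residue degree; everything else is bookkeeping with the definitions and the congruence \eqref{44}.
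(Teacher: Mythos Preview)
Your proposal is correct and takes essentially the same approach as the paper: both compute the identity $P_q(f,\sigma,X)=\bigl(P_v(f/\cK,X^{r_q})\bigr)^{(p-1)/r_q}$ (the paper writes $r_q$ for your $r_v$) and then appeal to the congruence \eqref{44} and the definition of $\mathcal P_2$. The only difference is that the paper simply asserts the Euler-factor identity ``one sees immediately,'' whereas you spell out two ways to obtain it (via induction, or via the character decomposition of $\sigma$); your extra care there is welcome but not a different argument.
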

\proof
Let $q$ have exact order $r_q$ modulo $p$, and let $v$ be a prime of ${\cK}$ above $q.$ Then
one sees immediately that
$$
P_q(f,\sigma,X)=(1-b_vX^{r_q}+q^{(k-1)r_q}X^{2r_q})^{\frac{p-1}{r_q}},
$$
where $b_v$ is defined by \eqref{44'}. Since $q^{r_q}\equiv 1\,{\rm mod}\, p$, the assertion of the lemma is now plain from the definition of ${\mathcal P}_2.$
\qed

\begin{lem}\label{4.3}
Let $q$ be any rational prime not equal to $p$ such that $q$ divides $ m$ and $\ord_q(N)
=1$ .Then all primes of $\kc$ above $q$ belong to ${\mathcal P}_1$ if and only if $\ord_p(P_q(f,\sigma,q^{-n}))>0$ for some integer $n$.
\end{lem}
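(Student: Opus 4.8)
The plan is to argue exactly as in the proof of Lemma~\ref{4.2}, the only change being the shape of the local Euler factor $P_q(f,\sigma,X)$, which in the present situation is governed by \eqref{45'} rather than by \eqref{44'}. First I would identify $P_q(f,\sigma,X)$ explicitly. Since $\Gal(\cK/\Q)\cong(\Z/p\Z)^\times$ is abelian, $\sigma$ is the regular representation of $\Gal(\cK/\Q)$, that is $\sigma=\mathrm{Ind}_{\Gal(\bar\Q/\cK)}^{\Gal(\bar\Q/\Q)}\mathbf 1$, so by the inductive property of $L$-functions $L(f,\sigma,s)=L(f/\cK,s)$. As $q\neq p$ is unramified in the cyclic extension $\cK/\Q$ of degree $p-1$, it has exactly $(p-1)/r_q$ primes $v$ above it in $\cK$, each of residue degree $r_q$, the exact order of $q$ modulo $p$; by \eqref{45'} each such $v$ carries the same integer $b_v=a_q^{r_q}$ with $P_v(f/\cK,X)=1-b_vX$. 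Comparing the Euler factors at $q$ of the two $L$-functions then gives
\[
  P_q(f,\sigma,X)=\prod_{v\mid q}P_v(f/\cK,X^{r_q})=\bigl(1-b_vX^{r_q}\bigr)^{(p-1)/r_q},\qquad b_v=a_q^{r_q}.
\]

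Next I would translate the $p$-divisibility condition. Because $q^{r_q}\equiv1\pmod p$ we have $q^{-nr_q}\equiv1\pmod p$ for every integer $n$, so $P_q(f,\sigma,q^{-n})\equiv(1-b_v)^{(p-1)/r_q}\pmod p$ independently of $n$; hence $\ord_p\!\bigl(P_q(f,\sigma,q^{-n})\bigr)>0$ for some (equivalently every) $n$ precisely when $b_v\equiv1\pmod p$. On the other hand, since by hypothesis $q\mid m$ and $\ord_q(N)=1$, every prime $w$ of $\kc$ above $q$ automatically satisfies the first two conditions in the definition \eqref{46} of ${\mathcal P}_1$, so that $w\in{\mathcal P}_1$ if and only if $b_v\equiv1\pmod p$, where $v=w\mid\cK$; and this $b_v$ is independent of the choice of $w$ above $q$. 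Comparing these two equivalent conditions proves the lemma.

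There is essentially no serious obstacle here: the statement is the exact analogue of Lemma~\ref{4.2}, and the only point needing a little care is the bookkeeping in the first step — checking that all the primes of $\cK$ (hence of $\kc$) above $q$ share the same residue degree $r_q$ and the same invariant $b_v$, so that ``all primes above $q$ lie in ${\mathcal P}_1$'' collapses to the single congruence $b_v\equiv1\pmod p$ and hence to the stated condition on $P_q(f,\sigma,q^{-n})$. One may also note in passing that $b_v^2\equiv1\pmod p$, as observed just before \eqref{46}, so $b_v\equiv\pm1$ and $b_v\equiv1$ is the non-trivial of the two alternatives.
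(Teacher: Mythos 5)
Your proof is correct and follows exactly the route of the paper's (very terse) argument: identify $P_q(f,\sigma,X)=(1-b_vX^{r_q})^{(p-1)/r_q}$ with $b_v=a_q^{r_q}$ from \eqref{45'}, reduce modulo $p$ using $q^{r_q}\equiv 1 \bmod p$, and compare with the definition \eqref{46} of ${\mathcal P}_1$. The extra bookkeeping you supply (all primes of $\cK$ above $q$ sharing the same residue degree and the same $b_v$) is exactly what the paper's ``one sees easily'' suppresses.
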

\proof
Let $q$ have exact order $r_q$ and let $v$ be a prime of ${\cK}$ above $q$.
Since $\sigma$ is unramified at $q$, one sees easily that
$$
P_q(f,\sigma,X)=(1-b_vX^{r_q})^{\frac{p-1}{r_q}},
$$
where $b_v$ is defined by \eqref{45'}, and hence the assertion of the lemma is clear.
\qed

\medskip

By the work of Manin, we always have $M_p^{\can}(f,\sigma,n)$ is in $\Z_p$
for $n=1,\dots, k-1.$ Hence we conclude
from Lemmas \ref{4.2} and \ref{4.3} that ${\cL}_p^{\can}(f,\sigma,n)\in p\Z_p$ if either ${\mathcal P}_1$ or ${\mathcal P}_2$ is non-empty. On the other hand, assuming Hypotheses H1-H4, Theorem \ref{3.2} shows that $X(A_{p^\infty}/F^{\cyc})$ is infinite if either ${\mathcal P}_1$ or ${\mathcal P}_2$ is non-empty. But  $X(A_{p^{\infty}}/F^{\cyc})$ is infinite if and only if its characteristic element as a $\La(\Gal(F^{\cyc}/F))$-module is not a unit in the Iwasawa algebra. But the main conjecture for $X(A_{p^{\infty}})$ predicts that the ${\cL}_p^{\can}(f,\rho,n)$ are all values of the characteristic power series of $X(A_{p^{\infty}}/F^{\cyc})$. Thus it would follow that ${\cL}^{\can}_p(f,\rho,n)\in p\Z_p$ if either ${\mathcal P}_1$ or ${\mathcal P}_2$ is non-empty, in accord with the Congruence Conjecture \ref{4.1}.

\section{Numerical data}
\label{s:numdata}

We refer the reader to Section 6 of \cite{DD} for a detailed discussion of how the computations are carried out in the case of a primitive form  of weight 2.  Entirely similar arguments (see \cite{TD}) apply to the  calculation of the  numerical values ${\cL}_p(f,\phi,n),$ for $n=1,\ldots, k-1,$ for our given primitive modular form $f=\overset{\infty}{\underset{n=1}{\sum}}\, a_nq^n$ of conductor $N$.  We do not enter into the details here, apart from listing the explicit Euler factors which occur for the primes dividing $pm$. As before, let
$$
{\cK}=\Q(\mu_p),~~~ F=\Q(\mu_p,m^{1/p}),
$$
where  $m$ is a $p$-power free integer $>1.$  As earlier, we write $\phi$ for either  the direct sum $\sigma$ of the one dimensional characters of $\Gal({\cK}/\Q)$ or the unique irreducible representation $\rho$ of dimension $p-1$ of  $\Gal(F/\Q)$, and note that both of these Artin representations are self-dual.  We suppose that $p$ is an odd prime number satisfying
$(p,a_p)=(p,N)=1.$  In addition, we assume that Hypothesis H2 holds. As earlier, let $P_p(\phi,X)$ denote the polynomial in $X$ giving the inverse Euler factor at $p$ of the complex $L$-series $L(\phi,s)$ of the Artin representation $\phi$, and $P_q(f,\phi,X)$  the polynomial giving the inverse Euler factor at a prime $q$ of the complex $L$-series $L(f,\phi,s).$

\medskip

\begin{lem}
We have that $P_p(\sigma, X) = 1-X$, and $P_p(f,\sigma,X)=P_p(f,X)$.  If  $m \equiv \pm 1\, {\rm mod} \, \,p^2$,  then $P_p(\rho, X) = 1- X$, and $P_p(f,\rho,X)=P_p(f,X)$. Otherwise, both $P_p(\rho,X)$ and $P_p(f,\rho,X)$ are equal to 1.
\end{lem}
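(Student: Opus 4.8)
The plan is to reduce all four assertions to a single local statement: the determination of the inertia invariants $M_l(\phi)^{I_p}$, together with the $\Frob_p$-action on them, for $\phi=\sigma$ and $\phi=\rho$. Here $l$ denotes any auxiliary prime distinct from $p$ used to realise the Galois representations, so that by \eqref{54} and \eqref{10''} (the latter applied with $q=p$ and with $l$ playing the role of the auxiliary prime) one has $P_p(\phi,X)=\det(1-\Frob_p^{-1}X\mid M_l(\phi)^{I_p})$ and $P_p(f,\phi,X)=\det\bigl(1-\Frob_p^{-1}X\mid (M_l(f)\otimes_{\bar{\Q}_l}M_l(\phi))^{I_p}\bigr)$. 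Since $(p,N)=1$, the representation $M_l(f)$ is unramified at $p$, so $I_p$ acts on $M_l(f)\otimes_{\bar{\Q}_l}M_l(\phi)$ through the second factor only, whence
\[
\bigl(M_l(f)\otimes_{\bar{\Q}_l}M_l(\phi)\bigr)^{I_p}\;=\;M_l(f)\otimes_{\bar{\Q}_l}M_l(\phi)^{I_p}
\]
as $\Frob_p$-modules. Consequently, if $M_l(\phi)^{I_p}=0$ then $P_p(\phi,X)=P_p(f,\phi,X)=1$, while if $M_l(\phi)^{I_p}$ is one-dimensional with $\Frob_p$ acting trivially on it, then $P_p(\phi,X)=1-X$ and $P_p(f,\phi,X)=\det(1-\Frob_p^{-1}X\mid M_l(f))=P_p(f,X)$, the last equality being \eqref{9}.

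For $\phi=\sigma$ I would argue as follows. Writing $\sigma=\bigoplus_\chi\chi$ with $\chi$ running over the characters of $\Gal({\cK}/\Q)=\Gal(\Q(\mu_p)/\Q)$, the extension $\Q(\mu_p)/\Q$ is totally ramified at $p$, so the trivial character is the only $\chi$ unramified at $p$. Hence $M_l(\sigma)^{I_p}$ is the one-dimensional subspace afforded by the trivial character, on which $\Gal(\bar{\Q}/\Q)$, and a fortiori $\Frob_p$, acts trivially; by the first paragraph $P_p(\sigma,X)=1-X$ and $P_p(f,\sigma,X)=P_p(f,X)$.

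For $\phi=\rho$ I would not recompute $P_p(\rho,X)$ directly, but instead quote the Euler-factor computation already carried out in the proof of Theorem~\ref{rootrho}: from the identity $\zeta_J(s)=\zeta(s)L(\rho,s)$ with $J=\Q(m^{1/p})$, and an examination of how $p$ decomposes in $J$, one obtains $P_p(\rho,X)=1-X$ when $m$ is a $p$th power in $\Z_p^\times$ and $P_p(\rho,X)=1$ otherwise; and, by Hensel's lemma applied to $x^p-m$ (again as in Section~3), the condition that $m$ be a $p$th power in $\Z_p^\times$ is equivalent to $p\nmid m$ together with $m^{p-1}\equiv1\mod p^2$, which for the prime $p=3$ of our numerical examples reads $m\equiv\pm1\mod p^2$. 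Since $\rho$ has dimension $p-1$, the polynomial $\det(1-\Frob_p^{-1}X\mid M_l(\rho)^{I_p})$ has degree $\dim M_l(\rho)^{I_p}$, so: if $P_p(\rho,X)=1-X$ then $M_l(\rho)^{I_p}$ is a line on which $\Frob_p^{-1}$, hence $\Frob_p$, acts as the identity, and the first paragraph yields $P_p(f,\rho,X)=P_p(f,X)$; while if $P_p(\rho,X)=1$ then $M_l(\rho)^{I_p}=0$ and $P_p(f,\rho,X)=1$. This disposes of both cases for $\rho$ and completes the proof.

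The only step carrying any real content is the identification of $P_p(\rho,X)$, i.e. the local behaviour of $p$ in $\Q(m^{1/p})$ and the Hensel-type reformulation of the $p$th-power condition; but this is precisely the computation already present in Section~3, so I do not expect a genuine obstacle — everything else is formal bookkeeping with inertia invariants of Artin representations.
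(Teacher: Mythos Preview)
Your proposal is correct, and in fact the paper gives no proof at all (the lemma is stated with a bare \qed), so you are supplying precisely the routine verification the authors omit: factor the inertia invariants using $(p,N)=1$, identify $M_l(\sigma)^{I_p}$ as the trivial line, and read off $M_l(\rho)^{I_p}$ from the $\zeta_J(s)=\zeta(s)L(\rho,s)$ computation already done in the proof of Theorem~\ref{rootrho}. Your remark that the stated condition $m\equiv\pm1\bmod p^2$ is the $p=3$ specialisation of the general criterion $m^{p-1}\equiv1\bmod p^2$ is apt and worth keeping, since Section~6 is explicitly in the setting $p=3$.
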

\qed

\begin{lem}
Let $q$ be any prime factor of $m$ distinct from $p$, and  write $r_q$ for the order of $q$ modulo $p$. Then we have:-
\begin{enumerate}
\item
$P_q(f,\sigma,X)=P_v\left(f/{\cK},X^{r_q}\right)^{\frac{p-1}{r_q}},$ where $P_v(f/{\cK},X)$ is the Euler factor of $f$ over ${\cK}$  at any prime $v$ of ${\cK}$ above $q$ if  $(q,N)=1.$
\item
$P_q(f,\sigma,X)=\left(1-a_q^{r_q}X^{r_q}\right)^{\frac{p-1}{r_q}}$  if  $\ord_q(N)=1.$
\item
$P_q(f,\rho,X)=1$.
\end{enumerate}
\end{lem}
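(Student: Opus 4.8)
The plan is to deduce all three formulas from the inductive property of $L$-functions (cf.\ \eqref{lind}) together with an elementary analysis of the primes of $\cK$ lying above $q$. Recall that $\sigma$ is the representation of $\Gal(\bar{\Q}/\Q)$ induced from the trivial character of $\Gal(\bar{\Q}/\cK)$, and that $\rho$ is induced from some non-trivial character $\chi$ of $\Gal(F/\cK)$, a cyclic group of order $p$. First I would set up the local picture at $q$. Since $q\neq p$, the prime $q$ is unramified in $\cK/\Q$ with residue degree $r_q$, so there are exactly $(p-1)/r_q$ primes $v$ of $\cK$ above $q$; they are permuted transitively by $\Gal(\cK/\Q)$, so they all have the same residue degree $r_v=r_q$ and the same local Euler factor, conjugate primes having conjugate Frobenius and inertia. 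Moreover, since $q\mid m$ and $m$ is $p$-power free with $q\neq p$, the exponent $\ord_q(m)$ satisfies $1\leq\ord_q(m)\leq p-1$ and is in particular prime to $p$, so $q$ is totally ramified in $\Q(m^{1/p})/\Q$; hence each prime $v\mid q$ ramifies in $F/\cK$, and because $\Gal(F/\cK)$ has prime order $v$ is in fact totally ramified there, with the image of the inertia group $I_v$ in $\Gal(F/\cK)$ equal to the whole group. In particular $\chi|_{I_v}$ has order $p$, hence is non-trivial.

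For the first two formulas, since $\sigma$ is induced from the trivial character of $\Gal(\bar{\Q}/\cK)$, the inductive property of $L$-functions gives $L(f,\sigma,s)=L(f/\cK,s)$, and comparing Euler products at $q$ yields $P_q(f,\sigma,X)=\prod_{v\mid q}P_v(f/\cK,X^{r_v})$. By the remarks above all $(p-1)/r_q$ factors coincide and $r_v=r_q$, so $P_q(f,\sigma,X)=P_v(f/\cK,X^{r_q})^{(p-1)/r_q}$ for any fixed $v\mid q$, which is the first formula. When $\ord_q(N)=1$, by \eqref{45'} the local factor is $P_v(f/\cK,X)=1-b_vX$ with $b_v=a_q^{r_q}$, and substituting gives $P_q(f,\sigma,X)=(1-a_q^{r_q}X^{r_q})^{(p-1)/r_q}$, which is the second formula.

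For the third formula, the projection formula shows that $M_p(f)\otimes_{\bar{\Q}_p}M_p(\rho)$ is induced from the representation $M_p(f)\otimes_{\bar{\Q}_p}M_p(\chi)$ of $\Gal(\bar{\Q}/\cK)$ (with $M_p(f)$ restricted to $\Gal(\bar{\Q}/\cK)$), so by inductivity $L(f,\rho,s)$ is the $L$-function over $\cK$ of this representation. Comparing Euler products at $q$ gives $P_q(f,\rho,X)=\prod_{v\mid q}\det\bigl(1-\Frob_v^{-1}X^{r_v}\mid(M_p(f)\otimes_{\bar{\Q}_p}M_p(\chi))^{I_v}\bigr)$, so it suffices to prove that $(M_p(f)\otimes_{\bar{\Q}_p}M_p(\chi))^{I_v}=0$ for every $v\mid q$. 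By Hypothesis H2 the assumption $q\mid m$ forces $\ord_q(N)\leq1$, so $I_v=I_q$ acts on $M_p(f)$ through a unipotent quotient: trivially if $(q,N)=1$, and as $\sm1*01$ in a suitable basis if $\ord_q(N)=1$ (as in Case 2 of the proof of Theorem \ref{rootrho}). Thus every Jordan--H\"older constituent of $M_p(f)$ as an $I_v$-module is trivial, so every constituent of $M_p(f)\otimes_{\bar{\Q}_p}M_p(\chi)$ is isomorphic to $\chi|_{I_v}$, which is non-trivial; a module all of whose constituents equal a fixed non-trivial character has no non-zero invariants, whence $(M_p(f)\otimes_{\bar{\Q}_p}M_p(\chi))^{I_v}=0$ and $P_q(f,\rho,X)=1$.

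The one point requiring real care is the ramification assertion that $q$ actually ramifies in $F/\cK$, equivalently that $\chi$ restricts non-trivially to $I_v$: this is precisely where the hypothesis that $m$ is $p$-power free enters (making $\ord_q(m)$ prime to $p$), and it can be verified locally, for instance from the Newton polygon of $x^p-m$ over $\Q_q$. Everything else is a formal consequence of the inductivity of Euler factors and the representation theory of a cyclic group of prime order, so I anticipate no real difficulty.
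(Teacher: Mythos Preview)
Your argument is correct. The paper itself gives no proof of this lemma (it is stated with an immediate \qed), so there is nothing to compare against; your approach via inductivity of $L$-functions and the explicit ramification of $q\mid m$ in $F/\cK$ is exactly the natural one, and your care with the point that $m$ being $p$-power free forces $\ord_q(m)$ to be prime to $p$ (hence $\chi|_{I_v}$ non-trivial) is well placed.
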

\qed

\medskip

\noindent We remark that the computations require knowledge of the Fourier coefficients $a_n$ of $f$ for $n$ ranging from 1 up to approximately the square root of the conductor of the complex $L$-function $L(f, \phi, s)$. Since these conductors are very large even for small $N$, this explains why we need to know explicitly the $a_n$ for $1 \leq n \leq 10^8$, and why we are essentially restricted to the case  of the prime $p=3$. For our primitive cusp form $f$ of small conductor, we computed these Fourier coefficients
$a_n$ using \cite{sage} as follows.  We use linear algebra to express $f$
explicitly as a polynomial in terms of Eisenstein series (we only used
small conductor forms $f$ where this was possible), then we evaluate this expression using
arithmetic with polynomials of large degree over the integers.  This
high precision evaluation took about 1 day of CPU time in some cases,
and relies on the fast FFT-based polynomial
arithmetic from \url{http://flintlib.org}, and optimized code for
computing coefficients of Eisenstein series due to Craig Citro,
along with other optimizations specific to this problem.  For evaluation
of the CM form of level $121$ and weight $4$, we computed the Fourier
coefficients $d_p$ for the corresponding elliptic curve of weight $2$
(using \cite{P}), then obtained the coefficients $a_p$ of the weight $4$ form
as the sum of the cubes of the roots of $X^2 - d_pX + p$, and finally extended these multiplicatively to obtain all of the coefficients~$a_n$.

\medskip

For even $k \geq 2$, let $E_k(q)\in \Q[[q]]$ denote the weight $k$ Eisenstein series of level 1, normalized
so that the coefficient of $q$ is $1$.
For integers $t\geq 1$, define $E_2^*(q^t) = E_2(q) - tE_2(q^t)$, which is
a holomorphic modular form of level $t$ and weight $2$.
We consider 4 explicit primitive forms; 3 have expressions
in terms of Eisenstein series, and the fourth in terms of an
elliptic  curve with complex multiplication.  The first 3 are the unique primitive forms
on $\Gamma_0(p)$ with given weight. The fourth form  $f$ is the complex multiplication form of conductor 121 which is attached to the cube of the Grossencharacter of the elliptic curve
\begin{equation}\label{cme}
y^2 + y = x^3 - x^2 - 7x + 10.
\end{equation}
This curve has complex multiplication by the full ring of integers of ${\Q}(\sqrt{-11})$, and has conductor 121 when viewed as a curve over $\Q$. The following table gives the first few terms of the $q$-expansion of these four forms, and note that, in each case, 3 is an ordinary prime because the coefficient of $q^3$ is not divisible by 3.

\begin{center}
\small
\renewcommand{\arraystretch}{2}
\begin{tabular}{|c|c|l|}\hline
{\bf Conductor} & {\bf Weight} &  {\bf Primitive form} \\\hline
5  & 4 & $-\frac{250}{3} E_{4}(q^{5}) - \frac{10}{3} E_{4}(q) + 13
E_{2}^*(q^{5})^{2} = q - 4q^{2} + 2q^{3} + 8q^{4} - 5q^{5} + \cdots$\\\hline

7 & 4 & $-\frac{147}{2} E_{4}(q^{7}) - \frac{3}{2} E_{4}(q) + 5
E_{2}^*(q^{7})^{2} =
q - q^{2} - 2q^{3} - 7q^{4} + 16q^{5} + \cdots
$ \\\hline
5 & 6 &
$
\frac{521}{6} E_{6}(q^{5}) - \frac{1}{30} E_{6}(q) + 248
E_{2}^*(q^{5})E_{4}(q^{5})
= q + 2q^{2} - 4q^{3} - 28q^{4} + 25q^{5} + \cdots$
\\\hline
121 & 4 &
$q + 8q^{3} - 8q^{4} + 18q^{5} + 37q^{9} - 64q^{12} + 144q^{15} +
64q^{16} + \cdots $
\\\hline
\end{tabular}
\end{center}
\renewcommand{\arraystretch}{1}
%\end{document}

\bigskip

The first two tables below provide numerical evidence in support of the congruences \eqref{1'}, and the third and fourth table below provides evidence in support of the stronger congruence \eqref{1''}.  The notation used in these four tables is as follows.  We have taken $p=3$, and assume that $\phi$ denotes either $\sigma$ or $\rho$, so that $d(\phi)=2$.
For each integer $n=1,\dots, k-1,$ put
\begin{equation}
L_3^*(f,\phi,n)= L(f,\phi,n) \epsilon_3(\phi)(2\pi i)^{-2n} (\Omega^+(f)|\Omega^-(f)|)^{-1},
\, \, \, \mathcal{P}_3(f, \phi, n) = \underset{q\mid 3m}{\prod}\,P_q(f,\phi,q^{-n}),
\end{equation}
and define
$$
\mathcal{L}_3(f, \phi, n) = \Gamma(n)^2\times L_3^*(f,\phi,n) \times \mathcal{P}_3(f, \phi, n) \times \frac{P_3(\phi,3^{n-1}/\alpha)}{P_3(\phi,\alpha/3^n)}\times ({3^{n-1}}/{\alpha})^{e_{3}(\phi)}.
$$
We also write $N(f, \phi)$ for the conductor of the complex $L$-function $L(f, \phi, s)$.
it is easily seen that $\epsilon _{3}(\sigma)$ is equal to the positive square root of $3$.  Moreover,
$\epsilon _{3}(\rho) = 3^5$ when $\ord_3(m) \geq 1$. When $(3, m) =1$, we have that $\epsilon _{3}(\rho)$ is equal to $3$ when $m \equiv \pm 1\,  {\rm mod} \, 3^2$,  and is equal to $3^3$ otherwise. If $r$ is any integer $\geq 1$,  and $w$ is an integer, the symbol $w + O(3^r)$ will denote a 3-adic integer  which is congruent to $w$ modulo  $3^r$.

\medskip

The reader should also bear in mind the following comments about the signs of the values $L_3^*(f, \phi, n)$ given in our tables below. Since $\phi$ can be realized over $\Q$, it follows from the convergence of the Euler product that $L(f, \phi, n)$ is strictly positive for $n=k/2+1,\dots, k-1$; in addition, the generalized Riemann hypothesis would also imply that the value at $n=k/2$ should either be zero or strictly positive (and this is the case in all of our numerical examples) Thus, by Theorem \eqref{2.3}, $L_3^*(f,\phi,n)$ is a rational number,which will have the sign $(-1)^nw(f,\phi)$ for $n=1,\dots, k/2-1$ by the functional equation \eqref{12}; and the sign of $L_3^*(f,\phi,k/2)$ should be $(-1)^{k/2}$ if it is non-zero.

\medskip

Finally, we recall (see Example 5.3 in section 5) that, for the form $f$ of conductor 121 and weight 4, the periods in Table IV are the naive periods, and that they must be replaced by the canonical periods defined in Example 5.3 to deduce the stronger congruence \eqref{1''} in this case.

\medskip

\advance\textheight by 2mm

\begin{center}
\tablefirsthead{%

 \hline
\multicolumn{7}{|c|}{Table I:   form $f$ of conductor 5 and weight 4.}\\
 \multicolumn{7}{|c|}{$L_3^*(f,\sigma,1)=-100,~~L_3^*(f,\sigma,2)=\frac{13}{3}$.}\\

 \hline  ${\tiny
% [inline block 0: 60 envs, 133002 chars -> data_tex | \begin{array}{c}m\end{array} }$ & ${\tiny...]

\end{center}

\bigskip

In the remaining four tables, we give some intriguing integrality
and squareness assertions for the L-values computed in the previous
four tables. Although we do not enter into any detailed discussion
in the present paper, it seems highly likely that these phenomena
can be explained via the Bloch-Kato conjecture, and Flach's
motivic generalization of the Cassels-Tate pairing. We define
$M$ to be the product of the distinct primes dividing $m$, but excluding the prime 3.
Let $N$ denote the conductor of our primitive form $f$. For $n=1,\dots, k-1$, we define
\begin{equation}\label{An}
A_n(f) = |L_3^*(f,\rho,n)|M^n\epsilon _{3}(\rho)^{(n-1)}/4
\end{equation}

\bigskip

In Table V, for the form $f$ of conductor 5 and weight 4, define

\begin{align*}B_1(f)&=A_1(f)/(2^2 \times 5^3 \times 13),\\
 B_2(f) &=A_2(f)/(5^2 \times 13).\end{align*}

\begin{center}
\tablefirsthead{

 \hline

\multicolumn{3}{|c|}{Table V:  form $f$ of conductor 5 and weight 4.}\\

 \hline  $m$&$B_1(f)$&$\sqrt{B_2(f)}$ \\\hline }

 \tablehead{\hline \multicolumn{3}{|c|}{Table V:  form $f$ of conductor 5 and weight 4.}\\

\hline $m$&$B_1(f)$&$\sqrt{B_2(f)}$  \\\hline }

\tabletail{\hline}

 \tablelasttail{\hline}
\begin{supertabular}{|l@{\hspace{20pt}}|l@{\hspace{0pt}}|l@{\hspace{0pt}}|}
$2 $&$ 2^{2}\cdot 7 $&$ 2$\\
$3 $&$ 5\cdot 41 $&$ 1$\\
$6 $&$ 2^{2}\cdot 1801 $&$ 2\cdot 7$\\
$7 $&$ 2^{4}\cdot 23\cdot 41 $&$ 2^{2}$\\
$11 $&$ 2^{6}\cdot 2311 $&$ 2^{2}\cdot 11$\\
$12 $&$ 2^{3}\cdot 839 $&$ 2^{3}$\\
$13 $&$ 11\cdot 13\cdot 43\cdot 53 $&$ 1$\\
$14 $&$ 2^{2}\cdot 5\cdot 7\cdot 13\cdot 251 $&$ 2\cdot 5$\\
$17 $&$ 31\cdot 167 $&$ 5$\\
$19 $&$ 5\cdot 43^{2} $&$ 13$\\
$21 $&$ 3425341 $&$ 67$\\
$22 $&$ 2^{3}\cdot 43\cdot 13841 $&$ 2^{2}\cdot 11$\\
$23 $&$ 2^{4}\cdot 3^{5}\cdot 1409 $&$ 2^{2}\cdot 3\cdot 5$\\
$26 $&$ 2^{2}\cdot 13\cdot 887 $&$ 2\cdot 13$\\
$28 $&$ 2^{2}\cdot 503 $&$ 2$\\
$29 $&$ 11\cdot 1678031 $&$ 109$\\
$31 $&$ 5\cdot 79\cdot 62351 $&$ 151$\\
$33 $&$ 5\cdot 11^{2}\cdot 19\cdot 2879 $&$ 5\cdot 7$\\
$34 $&$ 2^{4}\cdot 17\cdot 142427 $&$ 2^{2}$\\
$37 $&$ 2^{4}\cdot 3^{2}\cdot 5\cdot 367 $&$ 2^{2}\cdot 3$\\
$39 $&$ 2^{6}\cdot 71\cdot 17489 $&$ 2^{2}\cdot 11$\\
$41 $&$ 17\cdot 31\cdot 211\cdot 941 $&$ 11$\\
$42 $&$ 2^{2}\cdot 19\cdot 859\cdot 1801 $&$ 2\cdot 149$\\
$43 $&$ 2^{2}\cdot 7\cdot 19\cdot 251\cdot 491 $&$ 2\cdot 5^{2}$\\
$44 $&$ 2^{2}\cdot 11\cdot 421 $&$ 2\cdot 7$\\
$46 $&$ 2^{2}\cdot 3^{3}\cdot 7283 $&$ 2\cdot 3$\\
$47$&$2^4\cdot 23^2\cdot 22567$&$2^4\cdot 13$\\
$51 $&$ 2^{4}\cdot 5\cdot 13\cdot 278591 $&$ 2^{2}\cdot 101$\\
$52 $&$ 2^{2}\cdot 2513617 $&$ 2$\\
$53 $&$ 5\cdot 290161 $&$ 29$\\
$57 $&$ 2^{2}\cdot 61\cdot 503\cdot 4241 $&$ 2\cdot 7^{2}$\\
$58 $&$ 2^{6}\cdot 9208039 $&$ 2^{6}\cdot 5$\\
$59 $&$ 2^{4}\cdot 5\cdot 23\cdot 397\cdot 853 $&$ 2^{2}\cdot 11^{2}$\\
$62 $&$ 2^{2}\cdot 307\cdot 2879 $&$ 2\cdot 5\cdot 7$\\
$66 $&$ 2^{5}\cdot 5\cdot 6458773 $&$ 2^{3}\cdot 11$\\
$67 $&$ 2^{2}\cdot 13^{2}\cdot 19^{2}\cdot 4759 $&$ 2\cdot 103$\\
$68 $&$ 2^{2}\cdot 10484557 $&$ 2\cdot 7\cdot 13$\\
$69 $&$ 2^{2}\cdot 3^{3}\cdot 857\cdot 15733 $&$ 2\cdot 3\cdot 31$\\
$71 $&$ 2^{2}\cdot 7\cdot 31\cdot 79\cdot 101 $&$ 2\cdot 29$\\
$73 $&$ 5\cdot 17\cdot 47\cdot 1831 $&$ 43$\\
$74 $&$ 2^{2}\cdot 3^{4}\cdot 11\cdot 523\cdot 1031 $&$ 2\cdot 3^{3}$\\
$76 $&$ 2^{5}\cdot 311\cdot 7297 $&$ 2^{3}\cdot 13$\\
$77 $&$ 2^{4}\cdot 7\cdot 11\cdot 2377\cdot 60913 $&$ 2^{2}\cdot 101$\\
$82 $&$ 2^{3}\cdot 5\cdot 73\cdot 4817 $&$ 2^{2}\cdot 11$\\
$83 $&$ \text{?}$&$ 2^{2}\cdot 3^{2}\cdot 7$\\
$84 $&$ 2^{2}\cdot 7\cdot 431\cdot 10259 $&$ 2\cdot 199$\\
$89 $&$ 2^{2}\cdot 3^{5}\cdot 71\cdot 293 $&$ 2\cdot 3^{2}\cdot 5$\\
$91 $&$ 5\cdot 4519393 $&$ 1$\\
$92 $&$ 2^{2}\cdot 3^{2}\cdot 157\cdot 31019 $&$ 2\cdot 3^{2}$\\
$93 $&$ \text{?} $&$ 2\cdot 31^{3}\cdot 179$\\
$94 $&$ \text{?}$&$ 2\cdot 11\cdot 19$\\
$97 $&$ \text{?} $&$ 2^{2}\cdot 3^{2}\cdot 5^{2}$\\

\end{supertabular}
\end{center}

\bigskip

In Table VI, for the form $f$ of conductor 7 and weight 4, define

$$
B_1(f) = A_1(f)/(7^3 \times 5).
$$

\begin{center}
\tablefirsthead{%

 \hline

\multicolumn{2}{|c|}{Table VI: form $f$ of conductor 7 and weight 4. }\\

 \hline  $m$&$B_1(f)$ \\\hline }

 \tablehead{\hline \multicolumn{2}{|c|}{Table VI: form $f$ of conductor 7 and weight 4. }\\

\hline $m$&$B_1(f)$ \\\hline }

\tabletail{\hline}

 \tablelasttail{\hline}
\begin{supertabular}{|l@{\hspace{20pt}}|l@{\hspace{20pt}}|}

$2$ & $2^{2}\cdot 3\cdot 7$ \\
$3$ & $3\cdot 13^{2}$ \\
$5$ & $2^{5}\cdot 3\cdot 71$ \\
$6$ & $2^{3}\cdot 3\cdot 7\cdot 113$ \\
$7$ & $3\cdot 223$ \\
$10$ & $2^{2}\cdot 239$ \\
$11$ & $3\cdot 211\cdot 499$ \\
$12$ & $2^{2}\cdot 3\cdot 7\cdot 241$ \\
$13$ & $2^{6}\cdot 3\cdot 5\cdot 773$ \\
$14$ & $2^{2}\cdot 3\cdot 41\cdot 59$ \\
$15$ & $3\cdot 5\cdot 13\cdot 43\cdot 179$ \\
$17$ & $3^{2}\cdot 1223$ \\
$19$ & $2^{9}\cdot 37$ \\
$20$ & $2^{6}\cdot 3\cdot 1213$ \\
$21$ & $2^{11}\cdot 3\cdot 29$ \\
$22$ & $2^{3}\cdot 3\cdot 19\cdot 28277$ \\
$23$ & $3^{3}\cdot 47\cdot 10463$ \\
$26$ & $2^{2}\cdot 7\cdot 3917$ \\
$28$ & $2^{3}\cdot 13$ \\
$29$ & $3\cdot 7\cdot 1904647$ \\
$30$ & $2^{2}\cdot 3\cdot 19\cdot 266839$ \\
$31$ & $2^{6}\cdot 3\cdot 307267$ \\
$33$ & $2^{5}\cdot 3\cdot 849221$ \\
$34$ & $2^{9}\cdot 3^{3}\cdot 83\cdot 101$ \\
$35$ & $43\cdot 191$ \\
$37$ & $2^{5}\cdot 15937$ \\
$38$ & $2^{4}\cdot 3\cdot 5\cdot 864947$ \\
$39$ & $2^{6}\cdot 3\cdot 957811$ \\
$41$ & $2^{2}\cdot 3\cdot 5\cdot 13\cdot 173\cdot 1693$ \\
$42$ & $2^{2}\cdot 3\cdot 37\cdot 15601$ \\

\end{supertabular}
\end{center}

In Table VII, for  the form $f$ of conductor 5 and weight 6, define
\begin{align*}
B_1(f)&=A_1(f)/(2^{6}\cdot 31\cdot 5^2),\\
B_2(f)&=A_2(f)/(2^4\cdot 31),\\
B_3(f)&=A_3(f)\times 5 /(2^4\cdot 31).
\end{align*}

\begin{center}
\tablefirsthead{%

 \hline

\multicolumn{4}{|c|}{Table VII: form $f$ of conductor 5 and weight 6. }\\

 \hline  $m$&$B_1(f)$&$B_2(f)$&$\sqrt{B_3(f)}$\\\hline }

 \tablehead{\hline \multicolumn{4}{|c|}{Table VII: form $f$ of conductor 5 and weight 6. }\\

\hline $m$&$B_1(f)$&$B_2(f)$&$\sqrt{B_3(f)}$ \\\hline }

\tabletail{\hline}

 \tablelasttail{\hline}
\begin{supertabular}{|l@{\hspace{20pt}}|l@{\hspace{20pt}}|l@{\hspace{20pt}}|l@{\hspace{20pt}}|}
$2$ & $2^{5}\cdot 661$&$1759$ &$1$\\
$3$ & $2^{2}\cdot 5\cdot 13\cdot 2953$&$2^{2}\cdot 5\cdot 1223$ &$2$\\
$5$ & $3\cdot 193\cdot 211$&$3\cdot 5^{2}\cdot 13\cdot 37$ &$0$\\
$6$ & $2^{4}\cdot 5\cdot 137\cdot 39323$&$19\cdot 47\cdot 5531$ &$59$\\
$7$ & $2^{4}\cdot 7\cdot 14230919$&$2^{2}\cdot 47\cdot 53813$ &$47$\\
$10$ & $3\cdot 1097$&$3\cdot 5^{2}\cdot 31$ &$0$\\
$11$ & $2^{5}\cdot 5\cdot 971\cdot 592759$&$2^{3}\cdot 28000571$ &$181$\\
$12$ & $2^{4}\cdot 7^{2}\cdot 533063$&$2^{2}\cdot 7\cdot 145543$ &$2^{3}$\\
$13$ & $2^{2}\cdot 7\cdot 11\cdot 211\cdot 6591061$&$2^{3}\cdot 112051757$ &$13\cdot 31$\\
$14$ & $2^{5}\cdot 5^{2}\cdot 1082124649$&$5^{2}\cdot 65780839$ &$5^{2}\cdot 19$\\
$15$ & $2^{2}\cdot 3\cdot 13697\cdot 15101$&$2^{3}\cdot 3\cdot 5^{2}\cdot 103\cdot 1559$ &$0$\\
$17$ & $2^{2}\cdot 5^{3}\cdot 7\cdot 65777$&$2^{2}\cdot 491\cdot 971$ &$5$\\
$18$ & $2^{4}\cdot 7^{2}\cdot 533063$&$2^{2}\cdot 7\cdot 145543$ &$2^{3}$\\
$19$ & $2^{2}\cdot 11\cdot 14243891$&$2\cdot 5\cdot 418273$ &$2^{2}$\\
$20$ & $3\cdot 59\cdot 387077$&$3\cdot 5^{2}\cdot 96457$ &$0$\\
$21$ & $2^{2}\cdot 29\cdot 104789\cdot 2583353$&$2^{3}\cdot 19^{2}\cdot 647\cdot 11827$ &$2\cdot 191$\\
$23$ & $2^{4}\cdot 3^{3}\cdot 5\cdot 32517200203$&$2^{3}\cdot 3^{3}\cdot 1117\cdot 156733$ &$3\cdot 5$\\
$26$ & $2^{4}\cdot 699507967$&$7\cdot 6916561$ &$47$\\
$28$ & $2^{6}\cdot 5\cdot 29\cdot 41\cdot 113$&$17\cdot 39383$ &$19$\\
$29$ & $2^{2}\cdot 5\cdot 19\cdot 37\cdot 41633381443$&$2^{2}\cdot 283\cdot 3323\cdot 64067$ &$2\cdot 757$\\
$30$ & $3\cdot 13^{2}\cdot 17\cdot 53\cdot 1051\cdot 2713$&$3\cdot 5^{2}\cdot 7^{2}\cdot 3320281$ &$0$\\
$31$ & $2^{2}\cdot 5^{2}\cdot 1597\cdot 25447\cdot 254627$&$2^{2}\cdot 5\cdot 73\cdot 219638621$ &$2\cdot 967$\\
$33$ & $2^{2}\cdot 79\cdot 5727093605801$&$2^{2}\cdot 5^{2}\cdot 43\cdot 109868293$ &$5\cdot 31$\\
$34$ & $2^{5}\cdot 7\cdot 17\cdot 23\cdot 227\cdot 130914857$&$2^{4}\cdot 487\cdot 122916679$ &$2\cdot 1867$\\
$35$ & $2^{8}\cdot 3\cdot 7\cdot 46049$&$2^{4}\cdot 3\cdot 5^{2}\cdot 10729$ &$0$\\
$37$ & $2^{4}\cdot 3^{2}\cdot 5\cdot 181\cdot 199\cdot 9743$&$2^{4}\cdot 3^{2}\cdot 5\cdot 13\cdot 59\cdot 829$ &$3\cdot 7^{2}$\\
$38$ & $2^{7}\cdot 5^{2}\cdot 7\cdot 61\cdot 27077\cdot 185057$&$1657646829583$ &$31\cdot 149$\\
$39$ & $2^{5}\cdot 2957\cdot 86182236263$&$2^{3}\cdot 1039\cdot 3011\cdot 62311$ &$7\cdot 97$\\
$41$ & $2^{2}\cdot 3303519970879679$&$2\cdot 53\cdot 709\cdot 36628831$ &$881$\\
$42$ & $2^{5}\cdot 7^{3}\cdot 267139\cdot 5797783$&$59\cdot 59147190533$ &$4919$\\
$43$ & $2^{5}\cdot 19\cdot 638839\cdot 52230109$&$2^{3}\cdot 482148655367$ &$5\cdot 13\cdot 67$\\

\end{supertabular}
\end{center}

In Table VIII, for the CM form $f$ of conductor  121 and weight 4, define

$$
B_1(f)=A_1(f)/(2^2\cdot 11).$$

\begin{center}
\tablefirsthead{%

 \hline

\multicolumn{2}{|c|}{Table VIII: form $f$ of conductor 121 and weight 4. }\\

 \hline  $m$&$B_1(f)$ \\\hline }

 \tablehead{\hline \multicolumn{2}{|c|}{Table VIII: form $f$ of conductor 121 and weight 4.}\\

\hline $m$&$B_1(f)$ \\\hline }

\tabletail{\hline}

 \tablelasttail{\hline}
\begin{supertabular}{|l@{\hspace{20pt}}|l@{\hspace{20pt}}|}
$2$ & $2^{2}\cdot 3\cdot 17\cdot 37$ \\
$3$ & $2\cdot 5\cdot 4373$ \\
$5$ & $2\cdot 3^{3}\cdot 5\cdot 2069$ \\
$6$ & $3^{2}\cdot 83\cdot 2297$ \\
$7$ & $2\cdot 5\cdot 349\cdot 863$ \\
$10$ & $3^{2}\cdot 5\cdot 13\cdot 211$ \\
$11$ & $2^{4}\cdot 11^{2}$ \\
$12$ & $2^{2}\cdot 3\cdot 13\cdot 31\cdot 367$ \\
$14$ & $2^{4}\cdot 3\cdot 5\cdot 439\cdot 1129$ \\
$17$ & $2\cdot 3\cdot 29\cdot 8219$ \\
$19$ & $2\cdot 5^{3}\cdot 11\cdot 29\cdot 31$ \\
$20$ & $2^{4}\cdot 3^{2}\cdot 156241$ \\

\end{supertabular}
\end{center}

\begin{comment}
For all cusp forms $f$, here is the results for
$$L_3^*(f,\sigma,s)=L(f,\sigma,s)/(2\cdot
\pi)^{(p-1)s/2}/\omega^{(p-1)/2}\times \sqrt{p^{p-2}}$$ for
$n=1,2,\cdots,k/2$. We omit the case $s>k/2$ because it can be
deduce from $n=1$ by functional equation.

\begin{center}
\tablefirsthead{%

 \hline

\multicolumn{3}{|c|}{Table VIII: $L_3^*(f,\sigma,s)$ for the type
$(5,4)$. }\\

 \hline  $p$ & $L_3^*(f,\sigma,1)$ & $L_3^*(f,\sigma,2)$\\\hline }

 \tablehead{\hline \multicolumn{3}{|c|}{Table IX: $L_3^*(f,\sigma,s)$ for the type
$(5,4)$. }\\

\hline  $p$ & $L_3^*(f,\sigma,1)$ & $L_3^*(f,\sigma,2)$  \\\hline }

\tabletail{\hline}

 \tablelasttail{\hline}
\begin{supertabular}{|l@{\hspace{0pt}}|l@{\hspace{0pt}}|l@{\hspace{0pt}}|}

$3 $&$ {\tiny
\begin{array}{c}2^{2}\cdot 5^{2}\end{array}
} $&$ \frac{13}{3} $\\
\hline
 $7 $&$ {\tiny
\begin{array}{c}2^{14}\cdot 3^{2}\cdot 5^{8}\cdot 13^{2}\end{array}
} $&$ \frac{2^{8}\cdot 5^{4}\cdot 13^{3}}{7^{5}} $\\
\hline
 $11 $&$ {\tiny
\begin{array}{c}-2^{25}\cdot 3\cdot 5^{15}\cdot 11\cdot 13^{4}\cdot 31\cdot 71\end{array}
} $&$ 0 $\\
\hline
 $13 $&$ {\tiny
\begin{array}{c}-2^{28}\cdot 3^{3}\cdot 5^{17}\cdot 13^{6}\cdot 17^{2}\cdot 19\cdot 1381\end{array}
} $&$ 0 $\\
\hline
 $17 $&$ {\tiny
\begin{array}{c}-2^{41}\cdot 5^{24}\cdot 7^{2}\cdot 13^{8}\cdot 17\cdot 1409\cdot 8161\cdot 24337\end{array}
} $&$ 0 $\\
\hline
 $19 $&$ {\tiny
\begin{array}{c}-2^{47}\cdot 5^{26}\cdot 7\cdot 11\cdot 13^{8}\cdot 19\cdot 37\cdot 43\cdot 97\cdot 937\cdot 699563899\end{array}
} $&$ 0 $\\
\hline
 $23 $&$ {\tiny
\begin{array}{c}2^{52}\cdot 5^{32}\cdot 13^{10}\cdot 23^{2}\cdot 59\cdot 20549\cdot 43319\cdot 50515037\cdot 5522120033\end{array}
} $&$ \frac{2^{30}\cdot 5^{20}\cdot 13^{11}\cdot 6073^{2}}{23^{19}} $\\
\hline $29 $&$ {\tiny
\begin{array}{c}2^{70}\cdot 3^{8}\cdot 5^{41}\cdot 13^{13}\cdot 43\cdot 9209\cdot 9941\cdot 687933667\cdot 173996042375848897875637\end{array}
} $&$ \frac{2^{42}\cdot 5^{26}\cdot 13^{16}\cdot 41^{2}\cdot 43^{2}\cdot 167^{2}\cdot 281^{2}}{29^{27}} $\\

\end{supertabular}
\end{center}

For $p=31$, we have that
\begin{align*}L_3^*(f,\sigma,1)&={\tiny
\begin{array}{c}-2^{78}\cdot 5^{46}\cdot 7^{2}\cdot 13^{16}\cdot 19^{2}\cdot 31^{3}\\
\cdot 151^{2}\cdot 1861\cdot 934951\cdot 27663991\cdot
173369431\cdot 269316121\end{array}
} \\
L_3^*(f,\sigma,2)&=0\end{align*}

For $p=37$, we have that
\begin{align*}L_3^*(f,\sigma,1)&={\tiny
\begin{array}{c}-2^{102}\cdot 3^{10}\cdot 5^{55}\cdot 7\cdot 13^{19}\cdot 19^{2}\cdot 37^{3}\cdot 67\\
\cdot 73\cdot 109\cdot 487\cdot 617\cdot 4483\cdot 54179029\cdot
184826017\cdot 183903107531020741\end{array} }\\
L_3^*(f,\sigma,2)&=0\end{align*}

For $p=41$, we have that
\begin{align*}L_3^*(f,\sigma,1)&={\tiny
\begin{array}{c}2^{100}\cdot 3\cdot 5^{62}\cdot 11\cdot13^{21}\cdot
17^{3}\cdot 41^{4}\cdot 149\\
 \cdot 241\cdot 6761\cdot
505411\end{array} }\\
L_3^*(f,\sigma,2)&=\frac{2^{60}\cdot 3^{2}\cdot 5^{38}\cdot
11^{4}\cdot 13^{22}\cdot 101^{2}\cdot 167^{2}\cdot 181^{2}\cdot
461599^{2}}{41^{35}}\end{align*}

For $p=43$, we have that
\begin{align*}L_3^*(f,\sigma,1)&={\tiny
\begin{array}{c}2^{104}\cdot 3^{2}\cdot 5^{62}\cdot 13^{20}\cdot
31 \cdot 67 \\
\cdot 71\cdot 227\cdot 379\cdot 3529\cdot 13933 \cdot 3284023\cdot
10496921\\ \end{array} }\\
L_3^*(f,\sigma,2)&=\frac{2^{72}\cdot 5^{40}\cdot 7^{6}\cdot
13^{21}\cdot 29^{2}\cdot 41^{2}\cdot 113^{2}\cdot 96053^{2}\cdot
615677^{2}}{43^{41}}\end{align*}

\begin{center}
\tablefirsthead{%

 \hline

\multicolumn{3}{|c|}{Table IX: $L_3^*(f,\sigma,s)$ for the type
$(7,4)$. }\\

 \hline  $p$ & $L_3^*(f,\sigma,1)$ & $L_3^*(f,\sigma,2)$\\\hline }

\tabletail{\hline}

 \tablelasttail{\hline}
\begin{supertabular}{|l@{\hspace{0pt}}|l@{\hspace{0pt}}|l@{\hspace{0pt}}|}

$3 $&$ {\tiny
\begin{array}{c}-7^{2}\end{array}
} $&$ 0 $\\\hline $5 $&$ {\tiny
\begin{array}{c}-2\cdot 5^{3}\cdot 7^{5}\end{array}
} $&$ 0 $\\\hline $11 $&$ {\tiny
\begin{array}{c}2^{9}\cdot 5^{4}\cdot 7^{14}\cdot 311\cdot 2711\cdot 3301\end{array}
} $&$ \frac{2^{7}\cdot 3^{4}\cdot 5^{7}\cdot 7^{8}}{11^{9}} $\\
\hline $13 $&$ {\tiny
\begin{array}{c}-2^{5}\cdot 3^{3}\cdot 5^{6}\cdot 7^{17}\cdot 13\cdot 31\cdot 229\cdot 421\cdot 356173\end{array}
} $&$ 0 $\\\hline $17 $&$ {\tiny
\begin{array}{c}-2^{14}\cdot 5^{7}\cdot 7^{23}\cdot 17\cdot 89\cdot 257\cdot 1753\cdot 9197473\cdot 13885393\end{array}
} $&$ 0 $\\\hline $19 $&$ {\tiny
\begin{array}{c}-2^{16}\cdot 5^{8}\cdot 7^{26}\cdot 13^{2}\cdot 19^{3}\cdot 137\cdot 229\cdot 11287\cdot 69859\cdot 301203559\end{array}
} $&$ 0 $\\\hline $23 $&$ {\tiny
\begin{array}{c}2^{17}\cdot 5^{10}\cdot 7^{32}\cdot 23^{4}\cdot 5133613\cdot 326144501\cdot 1176064121\cdot 8743464313\end{array}
} $&$ \frac{2^{13}\cdot 5^{11}\cdot 7^{20}\cdot 43^{2}\cdot
617^{2}}{23^{17}} $\\\hline $29 $&$ {\tiny
\begin{array}{c}2^{49}\cdot 3^{3}\cdot 5^{13}\cdot 7^{42}\cdot 13\cdot 29^{4}\cdot 197\\
\cdot 509\cdot 28211\cdot 32173\cdot 148891\cdot
1625787002894779968349\end{array} } $&$ \frac{2^{38}\cdot
5^{14}\cdot 7^{30}\cdot 43^{2}\cdot 337^{2}}{29^{23}} $\\
\hline $31 $&$\tiny{
\begin{array}{c}-2^{30}\cdot 5^{14}\cdot 7^{45}\cdot 11\cdot 13\cdot 23\cdot 31^{5}\cdot 61^{2}\\
\cdot 151\cdot 1459\cdot 3041\cdot 8861\cdot 37591\cdot 612671\cdot
62218785091\cdot 5617129782601\end{array} } $&$ 0 $\\
\hline $37$&${\tiny
\begin{array}{c}2^{22}\cdot 5^{18}\cdot 7^{58}\cdot 19^{2}\cdot 37^{2}\cdot 181\cdot 397\cdot 1429\cdot 2647
\cdot 4027\cdot 52957\cdot 9137683\\
\cdot 316756381\cdot 3754734463\cdot 2850740624585355478316567896393
\\ \end{array} }$&0\\
\hline $41$&${\tiny
\begin{array}{c}-2^{24}\cdot 3^{4}\cdot 5^{21}\cdot 7^{59}\cdot 23\cdot 41\cdot 101^{2}\cdot 257\cdot 761\cdot 1361
\cdot 1801\cdot 3061\cdot 8821\cdot 11941\cdot 136261\\
\cdot 44011021\cdot 1357824337\cdot 1747843561\cdot
322712080096764302417173862608082081\\ \end{array} }$&0\\

\end{supertabular}
\end{center}

For $p=43$, we have that
\begin{align*}L_3^*(f,\sigma,1)&={\tiny
\begin{array}{c}2^{44}\cdot 3^{20}\cdot 5^{20}\cdot 7^{65}\cdot 19\cdot 29\cdot 31\cdot 197\cdot 211\cdot 239\cdot 95089\cdot 509293
\cdot 538147\cdot 335100529\\ \cdot 2689836871\cdot 4822342897\cdot
8128529879\cdot 453901110181\cdot 7109198618299\\ \end{array} }\\
L_3^*(f,\sigma,2)&={\tiny \begin{array}{c}2^{43}\cdot 3^{6} \cdot
5^{21} \cdot 7^{40}\cdot 13^{2} \cdot 41^{2} \cdot 2017^{2}\cdot
2141^{2}\cdot 3613^{2}\cdot 39901^{2}/43^{41}
\end{array}}\end{align*}

For the type $(3,6)$, here is the results for
$L_3^*(f,\sigma,s)=L(f,\sigma,s)/(2\cdot
\pi)^{(p-1)s/2}/\omega^{(p-1)/2}\times \sqrt{p^{p-2}}$. we omit the
case $s=4,5$ as it can be deduced from the functional equation.

\begin{align*}
\begin{array}{cc}
p & L_3^*(f,\sigma,1) \\
5 & {\tiny
\begin{array}{c}-2^{15}\cdot 3^{10}\cdot 13\end{array}
}   \\
7 & {\tiny
\begin{array}{c}2^{25}\cdot 3^{16}\cdot 5\cdot 13^{3}\cdot 19\end{array}
}   \\
11 & {\tiny
\begin{array}{c}-2^{47}\cdot 3^{29}\cdot 5^{2}\cdot 11\cdot 13^{4}\cdot 881\cdot 1151\cdot 5171\end{array}
}   \\
13 & {\tiny
\begin{array}{c}2^{63}\cdot 3^{38}\cdot 5^{3}\cdot 7\cdot 13^{5}\cdot 19\cdot 41\cdot 193\cdot 3697\cdot 5011\end{array}
}   \\
17 & {\tiny
\begin{array}{c}-2^{79}\cdot 3^{46}\cdot 13^{7}\cdot 17\cdot 43\cdot 32957\cdot 67489\cdot 67503649\cdot 23411411666593\end{array}
}   \\
19 & {\tiny
\begin{array}{c}2^{85}\cdot 3^{56}\cdot 13^{8}\cdot 37\cdot 61\cdot 271\cdot 647\cdot 1873\cdot 2971\cdot 141697\cdot 548263\cdot 548696918545561\end{array}
}   \\
23 & {\tiny
\begin{array}{c}-2^{112}\cdot 3^{66}\cdot 13^{10}\cdot 23\cdot 67^{2}\cdot 331\cdot 881\cdot 1013\cdot 1277\cdot 1607\cdot 2003\cdot 19603\cdot 35839\cdot 88597115377\cdot 604463429965517\end{array}
}   \\
29 & {\tiny
\begin{array}{c}-2^{153}\cdot 3^{84}\cdot 5^{7}\cdot 13^{13}\cdot 17\cdot 29^{3}\cdot 61\cdot 509\cdot 617\cdot 853\cdot 953\cdot 1471\\
\cdot 3109\cdot 35613187\cdot 250551010613\cdot 193429682490703\cdot
180044830534392049129\end{array}
}   \\
\end{array}
\end{align*}

\begin{align*}
\begin{array}{cc}
p & L_3^*(f,\sigma,2) \\
5 & {\tiny
\begin{array}{c}-\frac{2^{6}\cdot 3^{5}\cdot 13^{2}}{5^{2}}\end{array}
}   \\
7 & {\tiny
\begin{array}{c}\frac{2^{13}\cdot 3^{9}\cdot 13^{4}\cdot 97}{7^{5}}\end{array}
}   \\
11 & {\tiny
\begin{array}{c}-\frac{2^{21}\cdot 3^{18}\cdot 13^{5}\cdot 17\cdot 41\cdot 4391\cdot 23581}{11^{8}}\end{array}
}   \\
13 & {\tiny
\begin{array}{c}\frac{2^{30}\cdot 3^{25}\cdot 5\cdot 7^{4}\cdot 139\cdot 313\cdot 877\cdot 1453}{13^{5}}\end{array}
}   \\
17 & {\tiny
\begin{array}{c}-\frac{2^{42}\cdot 3^{29}\cdot 5\cdot 13^{8}\cdot 37\cdot 101\cdot 1433\cdot 6553\cdot 247393\cdot 54248556737}{17^{14}}\end{array}
}   \\
19 & {\tiny
\begin{array}{c}\frac{2^{42}\cdot 3^{37}\cdot 7\cdot 13^{10}\cdot 43\cdot 73\cdot 97\cdot 109\cdot 127\cdot 131\cdot 739\cdot 1017559\cdot 3107174714191}{19^{17}}\end{array}
}   \\
23 & {\tiny
\begin{array}{c}-\frac{2^{54}\cdot 3^{43}\cdot 5\cdot 13^{11}\cdot 397\cdot 19031\cdot 27127\cdot 53813\cdot 412589\cdot 6012491\cdot 27502927\cdot 1712839080481}{23^{20}}\end{array}
}   \\
29 & {\tiny
\begin{array}{c}-\frac{2^{84}\cdot 3^{56}\cdot 13^{14}\cdot 19\cdot 53\cdot 673\cdot 883\cdot 19801\cdot 21757\cdot 4339679387\cdot 2036545647557\cdot 4724381114881\cdot 12206792029769}{29^{24}}\end{array}
}   \\
\end{array}
\end{align*}

\begin{align*}
\begin{array}{cc}
p & L_3^*(f,\sigma,3) \\
5 & {\tiny
\begin{array}{c}0\end{array}
}   \\
7 & {\tiny
\begin{array}{c}\frac{2^{15}\cdot 3^{2}\cdot 13^{3}}{7^{10}}\end{array}
}   \\
11 & {\tiny
\begin{array}{c}0\end{array}
}   \\
13 & {\tiny
\begin{array}{c}\frac{2^{34}\cdot 3^{14}\cdot 5^{2}\cdot 11^{2}}{13^{16}}\end{array}
}   \\
17 & {\tiny
\begin{array}{c}0\end{array}
}   \\
19 & {\tiny
\begin{array}{c}\frac{2^{43}\cdot 3^{20}\cdot 5^{2}\cdot 13^{9}\cdot 71^{2}\cdot 19441^{2}}{19^{34}}\end{array}
}   \\
23 & {\tiny
\begin{array}{c}0\end{array}
}   \\
29 & {\tiny
\begin{array}{c}0\end{array}
}   \\
\end{array}
\end{align*}

For the type $(8,4)$, here is the results for
$$L_p^*(f,\sigma,s)=L(f,\sigma,s)/(2\cdot
\pi)^{(p-1)s/2}/\omega^{(p-1)/2}\times \sqrt{p^{p-2}}.$$
\begin{align*}
\begin{array}{ccc}
p & L_3^*(f,\sigma,1) & L_3^*(f,\sigma,2) \\
3 & {\tiny
\begin{array}{c}2^{6}\end{array}
} &   {\tiny
\begin{array}{c}\frac{2^{2}}{3}\end{array}
}   \\
5 & {\tiny
\begin{array}{c}-2^{19}\cdot 5\end{array}
} &   {\tiny
\begin{array}{c}0\end{array}
}   \\
7 & {\tiny
\begin{array}{c}-2^{35}\cdot 7\cdot 13\end{array}
} &   {\tiny
\begin{array}{c}0\end{array}
}   \\
13 & {\tiny
\begin{array}{c}-2^{71}\cdot 3^{3}\cdot 5^{3}\cdot 7^{2}\cdot 13\cdot 89\cdot 127\cdot 433\end{array}
} &   {\tiny
\begin{array}{c}0\end{array}
}   \\
17 & {\tiny
\begin{array}{c}2^{105}\cdot 3\cdot 5^{3}\cdot 1889\cdot 48497\cdot 1008048193\end{array}
} &   {\tiny
\begin{array}{c}\frac{2^{76}\cdot 191^{2}}{17^{15}}\end{array}
}   \\
19 & {\tiny
\begin{array}{c}2^{110}\cdot 3^{8}\cdot 5\cdot 7\cdot 37^{2}\cdot 673\cdot 2377\cdot 20269\cdot 71293231\end{array}
} &   {\tiny
\begin{array}{c}\frac{2^{74}\cdot 3^{6}\cdot 5^{2}\cdot 7^{2}\cdot 269^{2}}{19^{17}}\end{array}
}   \\
23 & {\tiny
\begin{array}{c}-2^{139}\cdot 3\cdot 5\cdot 23^{4}\cdot 67\cdot 1247006982673\cdot 10556705263956149\end{array}
} &   {\tiny
\begin{array}{c}0\end{array}
}   \\
29 & {\tiny
\begin{array}{c}-2^{193}\cdot 3^{6}\cdot 7\cdot 13^{2}\cdot 29\cdot 97\cdot 40013\cdot 19854157\cdot 740250673\cdot 1299311021\cdot 11424568709\end{array}
} &   {\tiny
\begin{array}{c}0\end{array}
}   \\
31 & {\tiny
\begin{array}{c}-2^{204}\cdot 3^{2}\cdot 11^{3}\cdot 19\cdot 23\cdot 31^{2}\cdot 61\cdot 211\\
\cdot 307\cdot 811\cdot 9431\cdot 23011\cdot 148921\cdot
3507211\cdot 121975411\cdot 554296411\\ \end{array} } &   {\tiny
\begin{array}{c}0\end{array}
}   \\
\\
37 & {\tiny
\begin{array}{cc}-2^{227}\cdot 3^{3}\cdot 7\cdot 19\cdot 31^{2}\cdot 37^{3}\cdot 41\cdot 43\cdot 313\cdot 433\cdot 757\cdot
1033\cdot\\
 4517\cdot 7741\cdot 699578821\cdot 210765654793\cdot 813612680035251520941722269\end{array}
}&   {\tiny
\begin{array}{c}0\end{array}
}   \\
41 & {\tiny
\begin{array}{cc}2^{262}\cdot 3^{5}\cdot 5^{2}\cdot 11\cdot 41^{2}\cdot 53\cdot 71\cdot 181\cdot 311\cdot 19301\cdot 85061\cdot 3196871\cdot\\
 119924009\cdot 309092883721\cdot 123002864525861\cdot 13500882060509608617464401\end{array}
} &   {\tiny
\begin{array}{c}0\end{array}
}   \\
43 &{\tiny
\begin{array}{cc}2^{274}\cdot 3^{2}\cdot 5\cdot 7^{3}\cdot 13\cdot 43^{2}\cdot 47\cdot 109\cdot 127\cdot 281\cdot
 691\cdot 5419\cdot 8093 \cdot 165103\cdot 487742599\\ \cdot 1643905957\cdot 4127997309913\cdot 185585447452987\cdot 561947394081578816011\end{array}
}&{\tiny
\begin{array}{c}2^{188}\cdot 11^{2}\cdot 29^{2}
\cdot 127^{2}\\\cdot 167^{2}\cdot 1051^{2}
\cdot 1091^{2}\\ \cdot 1931^{2}\cdot 2437^{2}/43^{39}\\
\end{array}
}\\
\end{array}
\end{align*}

For the type $(5,6)$, here is the results for
$L_3^*(f,\sigma,s)=L(f,\sigma,s)/(2\cdot
\pi)^{(p-1)s/2}/\omega^{(p-1)/2}\times \sqrt{p^{p-2}}$. we omit the
case $s=4,5$ as it can be deduced from the functional equation.

\begin{align*}
\begin{array}{cc}
p & L_p^*(f,\sigma,1) \\
3&  {\tiny
  \begin{array}{c}2^{4}\cdot 5^{2}\end{array}
  }  \\
7&  {\tiny
  \begin{array}{c}2^{22}\cdot 3^{2}\cdot 5^{6}\cdot 17\cdot 31^{2}\cdot 151\cdot 193\end{array}
  }  \\
11&  {\tiny
  \begin{array}{c}-2^{44}\cdot 3\cdot 5^{12}\cdot 7\cdot 11\cdot 19^{2}\cdot 31^{5}\cdot 102181\cdot 353401\end{array}
  }  \\
17&  {\tiny
  \begin{array}{c}-2^{71}\cdot 5^{16}\cdot 17\cdot 31^{7}\cdot 73\cdot 97\cdot 173\cdot 281\cdot 337\cdot 1009\cdot 2621\cdot 249593\cdot 58669518437633379041\end{array}
  }  \\
19&  {\tiny
  \begin{array}{c}-2^{80}\cdot 5^{18}\cdot 7^{3}\cdot 19^{3}\cdot 31^{9}\cdot 37^{2}\cdot 71^{2}\cdot 811\cdot 3511\cdot 7309\cdot 654967\cdot 1230379\cdot 1540243\cdot 26212516327\end{array}
  }  \\
23&  {\tiny
  \begin{array}{c}2^{94}\cdot 5^{22}\cdot 31^{10}\cdot 67\cdot 10331\cdot 29129\cdot 461941393\cdot 2441223049\cdot 92357519239769155950997\cdot 1075255363455437986870247\end{array}
  }  \\
\end{array}
\end{align*}

\begin{align*}
\begin{array}{cc}
p & L_p^*(f,\sigma,2) \\
3&  {\tiny
  \begin{array}{c}2\cdot 31\end{array}
  }  \\
7&  {\tiny
  \begin{array}{c}2^{14}\cdot 5^{4}\cdot 11\cdot 31^{4}\cdot 463\end{array}
  }  \\
11&  {\tiny
  \begin{array}{c}-2^{22}\cdot 3\cdot 5^{9}\cdot 11\cdot 13\cdot 31^{5}\cdot 241\cdot 661\cdot 206728121\end{array}
  }  \\
17&  {\tiny
  \begin{array}{c}-2^{41}\cdot 3\cdot 5^{15}\cdot 17\cdot 19\cdot 31^{8}\cdot 37\cdot 73\cdot 97\cdot 113\cdot 8605601\cdot 39874633\cdot 124420418641\end{array}
  }  \\
19&  {\tiny
  \begin{array}{c}-2^{42}\cdot 3^{6}\cdot 5^{16}\cdot 7\cdot 13\cdot 19^{3}\cdot 31^{10}\cdot 109\cdot 163\cdot 379\cdot 10333\cdot 12433\cdot 1223001469\cdot 634884059323\end{array}
  }  \\
23&  {\tiny
  \begin{array}{c}2^{52}\cdot 5^{20}\cdot 31^{11}\cdot 587\cdot 6029\cdot 404721128879\cdot 12671296381327\cdot 44425817974942875812624072513527\end{array}
  }  \\
\end{array}
\end{align*}

\begin{align*}
\begin{array}{cc}
p & L_p^*(f,\sigma,3) \\
3&  {\tiny
  \begin{array}{c}\frac{31}{5}\end{array}
  }  \\
7&  {\tiny
  \begin{array}{c}2^{8}\cdot 3^{4}\cdot 5^{5}\cdot 31^{3}\end{array}
  }  \\
11&  {\tiny
  \begin{array}{c}0\end{array}
  }  \\
17&  {\tiny
  \begin{array}{c}0\end{array}
  }  \\
19&  {\tiny
  \begin{array}{c}0\end{array}
  }  \\
23&  {\tiny
  \begin{array}{c}2^{40}\cdot 5^{29}\cdot 31^{11}\cdot 43^{2}\cdot 15973^{2}\cdot 19163^{2}\cdot 87253^{2}\end{array}
  }  \\
\end{array}
\end{align*}

\advance\textheight by -2mm

%\end{comment}
\newpage

\end{document}